\numberwithin{equation}{section}
\title{\bf Robust Incentive Stackelberg Mean Field Stochastic Linear-Quadratic Differential Game with Model Uncertainty \thanks{This work is supported by National Key R\&D Program of China (2022YFA1006104), National Natural Science Foundations of China (12471419, 12271304), and Shandong Provincial Natural Science Foundation (ZR2024ZD35).}}
\author{\normalsize Na Xiang\thanks{\it School of Mathematics, Shandong University, Jinan 250100, P.R. China, E-mail: 202211967@mail.sdu.edu.cn},\quad Jingtao Shi\thanks{\it Corresponding author, School of Mathematics, Shandong University, Jinan 250100, P.R. China, E-mail: shijingtao@sdu.edu.cn}}
\newtheorem{mypro}{Proposition}[section]
\newtheorem{mythm}{Theorem}[section]
\newtheorem{mydef}{Definition}[section]
\newtheorem{mylem}{Lemma}[section]
\newtheorem{Remark}{Remark}[section]
\newtheorem{mycor}{Corollary}[section]
\newtheorem{Example}{Example}[section]
\begin{document}

\maketitle
	
\noindent{\bf Abstract:}\quad This paper investigates a robust incentive Stackelberg stochastic differential game problem for a linear-quadratic mean field system, where the model uncertainty appears in the drift term of the leader's state equation. Moreover, both the state average and control averages enter into the leader's dynamics and cost functional. Based on the zero-sum game approach, mean field approximation and duality theory, firstly the representation of the leader's limiting cost functional and the closed-loop representation of decentralized open-loop saddle points are given, via decoupling methods. Then by convex analysis and the variational method, the decentralized strategies of the followers' auxiliary limiting problems and the corresponding consistency condition system are derived. Finally, applying decoupling technique, the leader's approximate incentive strategy set is obtained, under which the asymptotical robust incentive optimality of the decentralized mean field strategy is verified. A numerical example is given to illustrate the theoretical results.

\vspace{2mm}
	
\noindent{\bf Keywords:}\quad Mean field games,\, linear-quadratic zero-sum game,\, incentive Stackelberg stochastic differential game,\, model uncertainty,\, $H_\infty$ constraint
	
\vspace{2mm}
	
\noindent{\bf Mathematics Subject Classification:}\quad 91A15, 91A65, 93E20
	
\section{Introduction}

The Stackelberg game, also called the leader-follower game, was introduced by von Stackelberg \cite{Stackelberg34} in 1934, which is one of the important topics in the dynamic games. For the Stackelberg stochastic differential game, pioneer work was done by Bagchi and Ba\c{s}ar \cite{Bagchi81}, where the diffusion term of the state equation did not contain the state and control variables. Yong \cite{Yong02} discussed the open-loop solution of \emph{stochastic linear quadratic} (SLQ) nonzero-sum Stackelberg differential game, where the coefficients of the system with both state-dependent and control-dependent noise are random, and the weight matrices for the controls in the cost functionals are not necessarily positive definite. A lot of results have been made on this issue, one can also refer to Bensoussan et al. \cite{Bensoussan15}, Shi et al. \cite{Shi16,Shi17}, Li and Yu \cite{Li18}, Li et al. \cite{Li21}.
However, since the leader's strategy is affected by the follower's behavior, it is not easy for the leader to achieve his global optimum. Therefore, an incentive strategy is claimed by the leader ahead of time, which makes the follower and the leader cooperate to achieve the leader's team-optimum. This kind of game is called the incentive Stackelberg game.
The study of incentive design problems has a long history, such as \cite{Zheng82,Zheng84,Basar84,Cansever85,Basar89,Basar99}. Ho et al. \cite{Ho82} investigated the deterministic and stochastic versions of the incentive problem, and discussed their relationship
to economic literature. Mizukami and Wu \cite{Mizukami87,Mizukami88} discussed the derivation of the sufficient conditions for the LQ incentive Stackelberg game with multi-players in a two-level hierarchy. The three-level incentive Stackelberg strategy in a nonlinear differential game were analyzed in \cite{Ishida87,Ishida83}. Li et al. \cite{Li02} studied the team-optimal state feedback Stackelberg strategy of a class of discrete-time two-person nonzero-sum LQ dynamic games. Mukaidani et al. \cite{Mukaidani17-1} discussed the incentive Stackelberg game for a class of Markov jump linear stochastic systems with multiple leaders and followers. In \cite{Yin22,Gao23,Gao24}, the incentive feedback strategy was discussed in Stackelberg game for some kinds of stochastic systems. Sanjari et al. \cite{Sanjari25} studied incentive designs for a class of stochastic Stackelberg games with one leader and a large number of followers.

For the research on incentive Stackelberg games, there is a fair amount of literature on model uncertainty. To capture the idea that the leader and follower only access the approximation model rather than the realistic one, the external disturbance may be viewed as a model uncertainty synthesized in state dynamics. For the stochastic version, Mukaidani et al. \cite{Mukaidani18-1} discussed the incentive Stackelberg game with one leader and one follower subject to external disturbance by means of static output-feedback. The incentive Stackelberg game with one leader and multiple non-cooperative followers subjected to the $H_\infty$ constraint was discussed in \cite{Mukaidani17-2,Ahmed17-2,Mukaidani18-2,Mukaidani19}. Multi-leader-follower incentive Stackelberg games for SLQ systems with $H_\infty$ constraint were investigated in \cite{Ahmed17-1,Mukaidani18-3,Mukaidani20}. Subsequently, Mukaidani et al. \cite{Mukaidani22} studied the static output feedback strategy for robust incentive Stackelberg games with a large population for mean field stochastic systems. Xiang and Shi \cite{Xiang24} concerned with a three-level multi-leader-follower incentive Stackelberg game with $H_\infty$ constraint.

The theory of \emph{mean field games} (MFGs) was originally formulated by Larsy and Lions \cite{Lasry07} and simultaneously by Huang et al. \cite{Huang06}. In recent years, it has wide applications in many fields, such as economics, engineering, biology, physics, etc. In this context, because of the highly complicated coupling structure, it is infeasible and ineffective to obtain the classical centralized strategies based on the information of all peers. As a substitute, it is more effective and tractable to discuss the associated MFG to determine an approximate equilibrium by considering its own individual dynamic information and some off-line quantities only. For further research on MFGs and related topics, the readers can refer to \cite{Bensoussan13,Moon17,Wang17,Hu18,Carmona18}.

Building on the above discussion, this paper investigates a class of linear-quadratic incentive Stackelberg MFGs with one leader and $N$ followers. This framework is strongly motivated by practical applications in economics and game theory. For example, in federal tax policy design, the government leverages announced tax rules and citizens’ reported tax filings to identify policies that steer individuals toward socially equitable tax compliance, thereby achieving efficient tax collection \cite{Sanjari25}. Similarly, ride-hailing platforms such as Didi and Uber employ dynamic pricing and subsidy schemes to coordinate a large population of drivers, boosting order volumes and strengthening platform influence and reputation \cite{Ratliff19}. Furthermore, regulators design carbon taxes or green subsidies to incentivize numerous enterprises: while reducing carbon emissions, these firms also engage in strategic market competition. Notably, the model of our system exhibit several distinctive characteristics: (\romannumeral1) As for the leader, state and control average terms of  the $N$ followers are not only included in the drift term of the leader's state equation, but also affect its cost functional, which indicating that the $N$ followers' states and strategies directly influence the leader. This is motivated by real-world scenarios where all followers may have instantaneous and immediate effects on the leader's state and cost functional. Furthermore, the leader's control components' average enters into the drift term and diffusion term of its state equation and cost functional. This model setup more accurately addresses the incentive Stackelberg mean field games from the perspective of the leader's incentive designs. (\romannumeral2) As for the $N$ followers, the drift term of each follower's state equation and the cost functional include the leader's corresponding control over him, reflecting the direct impact of the leader on the follower, a dynamic dictated by the structural nature of the incentive system. (\romannumeral3) The external disturbance $v(\cdot)$ is viewed as a model uncertainty synthesized in the drift term of the leader's state dynamics, which represent some external influence from the common environment for decision-making. In order to consider the robustness, we utilize $H_\infty$ control theory and take a zero-sum game approach by considering the external disturbance as the system of an adversarial player. Different from the $H_2/H_\infty$ control theory based on the nonzero-sum Nash equilibrium condition, a soft-constrained zero-sum Nash equilibrium is considered to avoid dealing with coupled Riccati equations. Therefore, the auxiliary limiting problem of the leader $\mathcal{A}_0$ is essentially a zero-sum game. Based on duality theory, we give a representation of the limiting cost functional to discuss its properties. To obtain uniform concavity of the cost functional, we analyze the disturbance attenuation parameter $\gamma$ and establish their relationships (see Figure \ref{1}). Then, we obtain (decentralized) saddle point strategies and ensure the solvabilities of four across-coupled Riccati equations by expanding the dimension. For the i-th follower's auxiliary limiting problem, using the  variational method, we obtain the decentralized optimal control of the follower $\mathcal{A}_i$ and the {\it consistency condition} (CC) system. By the decoupling method, we establish the sufficient criteria for the leader $\mathcal{A}_0$ to achieve its (approximate) incentive goal and obtain the (approximate) incentive strategy set (Theorem \ref{thm4.3}). Finally, we give the definition of asymptotic robust incentive strategy, and prove that the corresponding decentralized mean-field strategy satisfies the asymptotic optimality.

The main contributions of this paper can be summarized as follows.

\begin{itemize}
\item[$\bullet$] Firstly, we study a class of incentive Stackelberg SLQ large-population problems with model uncertainty by the fixed-point method, where the state average and control averages are not only included in the leader's state equation, but also affect its cost functional, and the external disturbance enters into the drift term of the leader's state equation.

\item[$\bullet$] Secondly, for the auxiliary limiting problem of the leader, by using $H_\infty$ control theory, the zero-sum game approach and duality theory, we give the representation of the limiting cost functional and obtain the decentralized saddle point strategies. Moreover, we discuss the interrelations among uniform concavity of the limiting cost functional, solvability of the corresponding Riccati equation, and disturbance attenuation parameter $\gamma$ (see Figure \ref{1}). By the decoupling technique, we derive the optimal closed-loop representation of open-loop saddle point via some Riccati equations.

\item[$\bullet$] Thirdly, applying convex analysis and the variational method, we get the decentralized optimal control of the i-th follower for its auxiliary limiting problem. The related consistency condition system by the fixed-point argument is established.

\item[$\bullet$] Finally, by the decoupling method, we obtain the leader's approximate incentive strategy set (Theorem \ref{thm4.3}), under which the definition of asymptotic robust incentive strategy is given and the asymptotic optimality of the mean-field strategy is proved.
\end{itemize}

The rest of the paper is organized as follows. In Section 2, we introduce some preliminary notations and formulate the SLQ incentive Stackelberg MFGs with model uncertainty. Section 3 discusses the  leader's limiting auxiliary problem. The auxiliary optimal control for the i-th follower is obtained, and the CC system and the approximate incentive strategy are established in Section 4. In Section 5, the asymptotic optimality of the mean-field strategy is obtained. Two numerical example are given in Section 6. Section 7 concludes this paper.

\section{Preliminaries}

Let ($\Omega,\mathcal{F},\mathbb{P}$) be a complete filtered probability space and an $N+1$-dimensional standard Brownian motion $\{W_i\}_{i=0}^N$ is defined on it. Let $\mathcal{F}_t$, $\mathcal{F}_t^i$ and $\mathcal{G}_t^i$ denote the $\sigma$-algebra generated by $\left\lbrace W_i(s),0\leq s\leq t, 0\leq i\leq N\right\rbrace$, $\left\lbrace W_i(s),0\leq s\leq t\right\rbrace$ and $\left\lbrace W_0(s), W_i(s),0\leq s\leq t, 0\leq i\leq N\right\rbrace$, for $0\leq i\leq N$, respectively and augmented by all the $\mathbb{P}$-null sets in $\mathcal{F}$. Throughout the paper, let $\mathbb{R}^n$ denote the \emph{n}-dimensional Euclidean space with standard Euclidean norm $\vert\cdot\vert $ and standard Euclidean inner product $\left\langle\cdot,\cdot\right\rangle $. The transpose of a vector (or matrix) $x$ is denoted by $\mathbf{\emph{x}}^\top$. $\mbox{Tr}(A)$ denotes the trace of a square matrix $A$. Let $\mathbb{R}^{n\times m}$ be the Hilbert space consisting of all $n\times m$-matrices with the inner product $\left\langle A,B\right\rangle := \mbox{Tr}(AB^\top$) and the Frobenius norm $\vert A \vert:=\langle A,A\rangle^\frac{1}{2}$. Denote the set of symmetric $n\times n$ matrices with real elements by $\mathbb{S}^n$. If $M\in\mathbb{S}^n$ is positive (semi-) definite, we write $M > (\geq) 0$. If there exists a constant $\delta>0$ such that $M\geq\delta I$, we write $M\gg0$.	

For a fixed $T>0$, on a finite time horizon $[0,T]$, we introduce the following spaces for a given Hilbert space $\mathbb{H}$:
\begin{equation*}\hspace{-2mm}	
\begin{aligned}
	&L_{\mathcal{F}}^2(0,T;\mathbb{H}):=\left\{\phi :[0,T]\times\Omega \mapsto \mathbb{H}\,\Big|\,\phi\mbox{ is }\mathcal{F}_t\mbox{-progressively measurable}, \mathbb{E}\int_0^T |\phi(t)|^2 dt < \infty \right\},\\
	&L^\infty(0,T;\mathbb{H}):=\left\{\phi :[0,T] \mapsto \mathbb{H}\,\big|\,\phi\mbox{ is uniformly bounded}\right\},\\
	&L_{\mathcal{F}}^2(\Omega;C([0,T];\mathbb{H})):=\bigg\{\phi :[0,T]\times\Omega \mapsto \mathbb{H}\,\big|\,\phi\mbox{ is }\mathcal{F}_t\mbox{-adapted, continuous}, \mathbb{E}\Big[\sup_{0\leq t\leq T} \vert\phi(t)\vert^2\Big] < \infty \bigg\},\\
	&C([0,T];\mathbb{H}:=\left\{\phi :[0,T]\mapsto \mathbb{H}\,\big|\,\phi\mbox{ is continuous}\right\},\\
	&L_{\mathcal{F}_T}^2(\Omega,\mathbb{H}):=\left\{\xi :\Omega \mapsto \mathbb{H}\,\big|\,\xi\mbox{ is }\mathcal{F}_T\mbox{-measurable random variable}, \mathbb{E}\vert\xi\vert^2 < \infty \right\}.
\end{aligned}
\end{equation*}

Consider a weakly coupled large-population system with a leader $\mathcal{A}_0$ and $N$ individual followers $\{\mathcal{A}_i:1\leq i\leq N\}$. The dynamics of the leader $\mathcal{A}_0$ and the i-th follower $\mathcal{A}_i$ are as follows:
\begin{equation}\label{state0}
\left\{\begin{aligned}
	dx_0(t)&=\left[A(t)x_0(t)+B(t)u_0^{(N)}(t)+F(t)x^{(N)}(t)+H(t)u_1^{(N)}(t)+E(t)v(t)\right]dt\\
	&\quad +\left[C(t)x_0(t)+D(t)u_0^{(N)}(t)+O(t)x^{(N)}(t)\right]dW_0(t),\\
	x_0(0)&=\xi\in\mathbb{R}^n,
\end{aligned}\right.
\end{equation}
\begin{equation}\label{statei}
\left\{\begin{aligned}
	dx_i(t)&=\left[\tilde{A}(t)x_i(t)+\tilde{B}(t)u_{1i}(t)+\tilde{F}(t)x^{(N)}(t)+\tilde{H}(t)u_{0i}(t)\right]dt+\sigma(t)dW_i(t),\\
	x_i(0)&=x\in\mathbb{R}^n,
\end{aligned}\right.
\end{equation}
where $x_0(\cdot)$ and $x_i(\cdot)$ denote state processes of the leader $\mathcal{A}_0$ and the i-th follower $\mathcal{A}_i$, respectively.  $u_0(\cdot):=\mathbf{col}\left[u_{01}(\cdot),\cdots,u_{0N}(\cdot)\right]$ represents control process of the leader $\mathcal{A}_0$, where $u_{0i}(\cdot)\in\mathbb{R}^{m_L}$ is the leader's control input for the i-th follower. $u_{1i}(\cdot)\in\mathbb{R}^{m_F}$ is the control strategy of the i-th follower. Let $u_1(\cdot):=\mathbf{col}\left[u_{11}(\cdot),\cdots,u_{1N}(\cdot)\right]$, and define the followers' state-average, leader's control-average and followers' control-average as $x^{(N)}(\cdot):=\frac{1}{N}\sum_{i=1}^{N}x_i(\cdot)$, $u_0^{(N)}(\cdot):=\frac{1}{N}\sum_{i=1}^{N}u_{0i}(\cdot)$ and $u_1^{(N)}(\cdot):=\frac{1}{N}\sum_{i=1}^{N}u_{1i}(\cdot)$, respectively. $v(\cdot)\in\mathbb{R}^{n_v}$ represents the external unknown disturbance of the leader $\mathcal{A}_0$, which is a drift uncertainty. Moreover, we use  $u_{-0i}(\cdot)=\mathbf{col}\left[u_{01}(\cdot),\cdots,u_{0i-1}(\cdot),u_{0i+1}(\cdot),\cdots,u_{0N}(\cdot)\right]$ to denote the strategy set of the leader $\mathcal{A}_0$ except $u_{0i}(\cdot)$, and $u_{-1i}(\cdot)=\mathbf{col}\left[u_{11}(\cdot),\cdots,u_{1i-1}(\cdot),u_{1i+1}(\cdot),\cdots,u_{1N}(\cdot)\right]$ to denote the followers' strategy set except the i-th follower $\mathcal{A}_i$. The
centralized strategy sets of the leader $\mathcal{A}_0$ are defined by
\begin{equation*}
	\mathcal{U}_{0c}:=\left\{(u_0(\cdot),u_1(\cdot))\Big|(u_0(\cdot),u_1(\cdot)) \mbox{ is }\mathcal{F}_t\mbox{-adapted}, \mathbb{E}\int_0^T |(u_0(t),u_1(t))|^2 dt < \infty\right\},
\end{equation*}
\begin{equation*}
	\mathcal{U}_{vc}:=\left\{v(\cdot)\Big|v(\cdot) \mbox{ is }\mathcal{F}_t\mbox{-adapted}, \mathbb{E}\int_0^T |v(t)|^2 dt < \infty\right\},
\end{equation*}
the centralized control set of the i-th follower $\mathcal{A}_i$ is given by
\begin{equation*}
 	\mathcal{U}_{ic}:=\left\{u_{1i}(\cdot)\Big|u_{1i}(\cdot) \mbox{ is }\mathcal{F}_t\mbox{-adapted}, \mathbb{E}\int_0^T |u_{1i}(t)|^2 dt < \infty\right\}.
\end{equation*}
The admissible decentralized strategy sets are defined as follows:
\begin{equation*}
	\mathcal{U}_{0d}:=\left\{(u_0(\cdot),u_1(\cdot))\Big|(u_0(\cdot),u_1(\cdot)) \mbox{ is }\mathcal{G}_t^0\mbox{-adapted}, \mathbb{E}\int_0^T |(u_0(t),u_1(t))|^2 dt < \infty\right\},
\end{equation*}
\begin{equation*}
	\mathcal{U}_{vd}:=\left\{v(\cdot)\Big|v(\cdot) \mbox{ is }\mathcal{G}_t^0\mbox{-adapted and}\; \mathbb{E}\int_0^T |v(t)|^2 dt < \infty\right\},
\end{equation*}
\begin{equation*}
	\mathcal{U}_{id}:=\left\{u_{1i}(\cdot)\Big|u_{1i}(\cdot) \mbox{ is }\mathcal{G}_t^i\mbox{-adapted}, \mathbb{E}\int_0^T |u_{1i}(t)|^2 dt < \infty\right\}.
\end{equation*}

The cost functions of the leader $\mathcal{A}_0$ and the i-th follower $\mathcal{A}_i$ are supposed to be
\begin{equation}\label{cost0}
\begin{aligned}
	&\mathcal{J}_0(u_0(\cdot),u_1(\cdot),v(\cdot))\\
    &=\mathbb{E}\bigg\{\int_0^T\left[ \left\vert x_0-\Gamma_1x^{(N)}\right\vert_Q^2+\left\vert u_0^{(N)}\right\vert_{R_0}^2+\left\vert u_1^{(N)}\right\vert_{R_1}^2\right](t)dt
    +\left\vert x_0(T)-\Gamma_2x^{(N)}(T)\right\vert_G^2 \bigg\},
\end{aligned}
\end{equation}
\begin{equation}\label{costi}
\begin{aligned}
	&\mathcal{J}_i(u_0(\cdot),u_1(\cdot),v(\cdot))\\
    &=\mathbb{E}\bigg\{\int_0^T\left[ \left\vert x_i-\tilde{\Gamma}_1x^{(N)}\right\vert_{\tilde{Q}}^2+\left\vert u_{0i}\right\vert_{\tilde{R}_0}^2+\left\vert u_{1i}\right\vert_{\tilde{R}_1}^2\right](t)dt+\left\vert x_i(T)-\tilde{\Gamma}_2x^{(N)}(T)\right\vert_{\tilde{G}}^2 \bigg\}.
\end{aligned}
\end{equation}

\begin{Remark}
(\romannumeral1) When there exists a large number of homogeneous followers in the system, the leader's strategy is no longer to tailor control or incentive scheme for each individual follower. Instead, the leader influences all followers by designing macroscopic rules, prices, or incentive mechanisms. Therefore, the leader’s averaged control input for the followers act as a direct control input for the leader’s own state (see \cite{Mukaidani22,Ozai25}).

(\romannumeral2) If the followers' control-average $u_1^{(N)}$ enters the diffusion term of the state equation of the leader $\mathcal{A}_0$, it will make the subsequent derivation formally more complicated. In the present paper, we consider the form of the leader's state equation (\ref{state0}). We will consider the general case in the future.

(\romannumeral3) For the state equation of the i-th follower $\mathcal{A}_i$, we can only consider the case of additive noise. If the state equation (\ref{statei}) also includes state-dependent and control-dependent diffusion terms, the law of large numbers will fail, and we cannot obtain the equation satisfied by the limit term, which renders the limiting leader's problem unsolvable. We wish to overcome this difficulty in the future.
\end{Remark}

For the coefficients of (\ref{state0})-(\ref{costi}), we give the following assumptions.

\textbf{(A1)} $A(\cdot)$, $F(\cdot)$, $C(\cdot)$, $\tilde{A}(\cdot)$, $\tilde{F}(\cdot)$ $\in	L^\infty(0,T;\mathbb{R}^{n\times n})$;\ $B(\cdot)$, $D(\cdot)$, $\tilde{H}(\cdot)$ $\in L^\infty(0,T;\mathbb{R}^{n\times m_L})$;\
$H(\cdot)$, $\tilde{B}(\cdot)$ $\in L^\infty(0,T;\mathbb{R}^{n\times m_F})$;\ $E(\cdot)$ $\in L^\infty(0,T;\mathbb{R}^{n\times m_v})$; $\sigma(\cdot)\in L^\infty(0,T;\mathbb{R}^n)$.

\textbf{(A2)} $Q(\cdot)$, $\tilde{Q}(\cdot)$ $\in L^\infty(0,T;\mathbb{S}^{n})$;\ $R_0(\cdot)$, $\tilde{R}_0(\cdot)$ $\in L^\infty(0,T;\mathbb{S}^{m_L})$;\ $R_1(\cdot)$, $\tilde{R}_1(\cdot)$ $\in L^\infty(0,T;\mathbb{S}^{m_F})$;\ $\Gamma_1(\cdot)$, $\tilde{\Gamma}_1(\cdot)$$\in L^\infty(0,T;\mathbb{R}^{n\times n})$; $\Gamma_2$, $G$, $\tilde{\Gamma}_2$, $\tilde{G}$ $\in\mathbb{S}^n$.

In $H_\infty$ control, for a fixed disturbance attenuation level $\gamma>0$, the leader $\mathcal{A}_0$ wants to find a strategy $(u_0(\cdot),u_1(\cdot))$ to satisfy
\begin{equation*}
	\frac{\mathcal{J}_0(u_0(\cdot),u_1(\cdot),v(\cdot))}{\mathbb{E}\int_0^T\vert v(t)\vert_{R_2}^2dt}\leq\gamma^2,
\end{equation*}
for all non-zero $v(\cdot)\in L_{\mathcal{F}}^2(0,T;\mathbb{R}^{n_v})$.

Therefore, we redefine the cost functional (\ref{cost0}) as the following zero-sum cost functional for the leader $\mathcal{A}_0$ and the external disturbance $v(\cdot)$:
\begin{equation}\label{cost0'}
\begin{aligned}
	&\mathcal{J}_0(u_0(\cdot),u_1(\cdot),v(\cdot))\\
&=\mathbb{E}\bigg\{\int_0^T\left[ \left\vert x_0-\Gamma_1x^{(N)}\right\vert_Q^2+\left\vert u_0^{(N)}\right\vert_{R_0}^2+\left\vert u_1^{(N)}\right\vert_{R_1}^2-\gamma^2\vert v\vert_{R_2}^2\right](t)dt+\left\vert x_0(T)-\Gamma_2x^{(N)}(T)\right\vert_G^2 \bigg\}.
\end{aligned}
\end{equation}

Under the above assumptions \textbf{(A1)}-\textbf{(A2)}, for any $(u_0(\cdot),u_1(\cdot))\in\mathcal{U}_{0c}$, $v(\cdot)\in\mathcal{U}_{vc}$, the state system (\ref{state0})-(\ref{statei}) admit unique solutions, then the cost
functionals (\ref{costi})-(\ref{cost0'}) are well-defined.

Now, we formulate the following robust incentive Stackelberg games with large-population problem.

Problem \textbf{(L1)}. To find a centralized saddle point $(u^*(\cdot),v^*(\cdot))=(u_0^*(\cdot),u_1^*(\cdot),v^*(\cdot))\in\mathcal{U}_{0c}\times\mathcal{U}_{vc}$, such that
\begin{equation*}
	\mathcal{J}_0(u_0^*(\cdot),u_1^*(\cdot),v^*(\cdot))=\inf_{(u_0(\cdot),u_1(\cdot))\in\,\mathcal{U}_{0c}}\sup_{v(\cdot)\in\,\mathcal{U}_{vc}}\mathcal{J}_0(u_0(\cdot),u_1(\cdot),v(\cdot)),
\end{equation*}
subjects to (\ref{state0})-(\ref{statei}) and (\ref{cost0'}). Here, the cost functional of the leader $\mathcal{A}_0$ under the worst-case disturbance is
\begin{equation*}
	\mathcal{J}_0^{wo}(u_0(\cdot),u_1(\cdot))=\sup_{v(\cdot)\in\,\mathcal{U}_{vc}}\mathcal{J}_0(u_0(\cdot),u_1(\cdot),v(\cdot)).
\end{equation*}

Problem \textbf{(F1)}. To find a centralized strategy set $u_1^+(\cdot)=(u_{11}^+(\cdot),\cdots,u_{1N}^+(\cdot))$, where $u_{1i}^+(\cdot) \in\mathcal{U}_{ic}$, such that
\begin{equation*}
\begin{aligned}
    &\mathcal{J}_i\left(u_{0i}(u_{1i}^+)(\cdot),u_{-0i}(u_{-1i}^+)(\cdot),u_{1i}^+(\cdot),u_{-1i}^+(\cdot)\right)\\
    &=\inf_{u_{1i}(\cdot)\in\,\mathcal{U}_{ic}}\mathcal{J}_i\left(u_{0i}(u_{1i})(\cdot),u_{-0i}(u_{-1i}^+)(\cdot),u_{1i}(\cdot),u_{-1i}^+(\cdot)\right),
\end{aligned}
\end{equation*}
for $1\leq i\leq N$, subjects to (\ref{state0})-(\ref{statei}) and (\ref{costi}). $u_{0i}(u_{1i})(\cdot)$ denotes the leader's incentive strategy for the i-th follower, whose explicit form is presented in Section 4. Moreover, $u_{-0i}(u_{-1i})(\cdot)$ denotes the leader’s control profile which does not include $u_{0i}(u_{1i})(\cdot)$.

The system (\ref{state0})-(\ref{statei}) are fully coupled due to the coupling state-average and control-average, thus each agent should access all the information to solve his/her optimization problem. However, there are some difficulties making it unsolvable. Firstly, in many practical applications, each agent can only access his/her own information, while the information of other agents may be inaccessible. Secondly, due to the complicated coupling mechanism, the dynamic optimization is plagued by the curse of dimensionality and complexity. Inspired by \cite{Sanjari25} and combined with MFG theory, we present the following definitions.

\begin{mydef}
A set of strategies $(\hat{u}(\cdot),\hat{v}(\cdot))=(\hat{u}_0(\cdot),\hat{u}_1(\cdot),\hat{v}(\cdot))\in\mathcal{U}_{0c}\times\mathcal{U}_{vc}$ constitutes an $\epsilon_0$-leader robust team-optimal solution with respect to the cost functional $\mathcal{J}_0$ if
\begin{equation*}
	\bigg\vert \mathcal{J}_0(\hat{u}_0(\cdot),\hat{u}_1(\cdot),\hat{v}(\cdot))-\inf_{(u_0(\cdot),u_1(\cdot))\in\,\mathcal{U}_{0c}}\sup_{v(\cdot)\in\,\mathcal{U}_{vc}}\mathcal{J}_0(u_0(\cdot),u_1(\cdot),v(\cdot))\bigg\vert=O(\epsilon_0).
\end{equation*}
\end{mydef}

\begin{mydef}
The strategy profile $\check{u}_1(\cdot)=(\check{u}_{11}(\cdot),\cdots,\check{u}_{1N}(\cdot))$, where $\check{u}_{1i}(\cdot)\in\mathcal{U}_{ic}$, $1\leq i\leq N$, is called an $\hat{\epsilon}$-followers' Nash equilibrium with respect to the cost functional $\mathcal{J}_i$ if there exists an $\hat{\epsilon}=\hat{\epsilon}(N)\geq0$, $\lim\limits_{N\to\infty}\hat{\epsilon}(N)=0$, such that
\begin{equation*}
	\mathcal{J}_i({u_{0i}(\check{u}_{1i})(\cdot),u_{-0i}(\check{u}_{-1i})(\cdot)},\check{u}_{1i}(\cdot),\check{u}_{-1i}(\cdot))\leq\mathcal{J}_i({u_{0i}(u_{1i})(\cdot),u_{-0i}(\check{u}_{-1i})(\cdot)},u_{1i}(\cdot),\check{u}_{-1i}(\cdot))+\hat{\epsilon},
\end{equation*}
where $u_{1i}(\cdot)\in\mathcal{U}_{ic}$ is any alternative strategy applied by the i-th follower $\mathcal{A}_i$.
\end{mydef}

\begin{mydef}\label{def}
Given $\epsilon=(\epsilon_0,\hat{\epsilon})\geq0$, $(\hat{u}(\cdot),\hat{v}(\cdot))=(\hat{u}_0(\cdot),\hat{u}_1(\cdot),\hat{v}(\cdot))\in\mathcal{U}_{0c}\times\mathcal{U}_{vc}$ is called an $\epsilon$-robust incentive strategy if
	
(\romannumeral1) $(\hat{u}(\cdot),\hat{v}(\cdot))$ is an $\epsilon_0$-leader robust team-optimal strategy.
	
(\romannumeral2) $\bar{\hat{u}}_1(\cdot)=\bar{\check{u}}_1(\cdot)$, where $\check{u}_1(\cdot)$ is an $\hat{\epsilon}$-followers' Nash equilibrium, $\bar{\hat{u}}_1(\cdot)=\lim\limits_{N\to\infty}\frac{1}{N}\sum_{i=1}^{N}\hat{u}_{1i}(\cdot)$, $\bar{\check{u}}_1(\cdot)=\lim\limits_{N\to\infty}\frac{1}{N}\sum_{i=1}^{N}\check{u}_{1i}(\cdot)$.
\end{mydef}

\begin{Remark}
Intuitively understanding Definition \ref{def} (\romannumeral2): when the limiting behavior $\bar{\check{u}}_1(\cdot)$ of the $\hat{\epsilon}$-followers' Nash equilibrium  coincides with the limiting behavior $\bar{\hat{u}}_1(\cdot)$ of the followers' control strategies in the $\epsilon_0$-leader robust team-optimal strategy, it indicates that the leader has achieved the incentive objective. Then the $\epsilon_0$-leader robust team-optimal strategy is an $\epsilon$-robust incentive strategy.
\end{Remark}

\section{The limiting leader's team-optimal problem}

As mentioned in the previous section, it is infeasible for each agent to obtain the centralized
strategies in the large-population system. Alternatively, we use MFG theory to derive the decentralized strategy through an auxiliary limiting problem, where the state-average and control-average should be fixed to their frozen limit terms. When $N\to\infty$, suppose that the followers' state-average $x^{(N)}(\cdot)$, leader's control-average $u_0^{(N)}(\cdot)$ and followers' control-average $u_1^{(N)}(\cdot)$ are approximated by the process
$m(\cdot)$, $\bar{u}_0(\cdot)$ and $\bar{u}_1(\cdot)$, respectively, which will be determined later by the CC system. We introduce the following auxiliary state for the leader $\mathcal{A}_0$ and the i-th follower $\mathcal{A}_i$:
\begin{equation}\label{limstate0}
\left\{\begin{aligned}
	dx_0(t)&=\left[A(t)x_0(t)+B(t)\bar{u}_0(t)+F(t)m(t)+H(t)\bar{u}_1(t)+E(t)v(t)\right]dt\\
	&\quad +\left[C(t)x_0(t)+D(t)\bar{u}_0(t)+O(t)m(t)\right]dW_0(t),\\
	\dot{m}(t)&=\left[\tilde{A}(t)+\tilde{F}(t)\right]m(t)+\tilde{B}(t)\bar{u}_1(t)+\tilde{H}(t)\bar{u}_0(t),\\
	x_0(0)&=\xi,\quad m(0)=x,
\end{aligned}\right.
\end{equation}
\begin{equation}\label{limstatei}
\left\{\begin{aligned}
	dx_i(t)&=\left[\tilde{A}(t)x_i(t)+\tilde{B}(t)u_{1i}(t)+\tilde{F}(t)m(t)+\tilde{H}(t)u_{0i}(t)\right]dt+\sigma(t)dW_i(t),\\
	\dot{m}(t)&=\left[\tilde{A}(t)+\tilde{F}(t)\right]m(t)+\tilde{B}(t)\bar{u}_1(t)+\tilde{H}(t)\bar{u}_0(t),\\
	x_i(0)&=x,\quad m(0)=x,
\end{aligned}\right.
\end{equation}
and the limiting cost functionals are given by
\begin{equation}\label{limcost0}
\begin{aligned}
	&J_0(\bar{u}_0(\cdot),\bar{u}_1(\cdot),v(\cdot))\\
&=\mathbb{E}\bigg\{\int_0^T\left[ \vert x_0-\Gamma_1m\vert_Q^2+\vert \bar{u}_0\vert_{R_0}^2+\vert \bar{u}_1\vert_{R_1}^2-\gamma^2\vert v\vert_{R_2}^2\right](t)dt+\vert x_0(T)-\Gamma_2m(T)\vert_G^2 \bigg\},
\end{aligned}
\end{equation}
\begin{equation}\label{limcosti}
\begin{aligned}
	J_i(u_{0i}(\cdot),u_{1i}(\cdot))&=\mathbb{E}\bigg\{\int_0^T\left[ \left\vert x_i-\tilde{\Gamma}_1m\right\vert_{\tilde{Q}}^2+\vert u_{0i}\vert_{\tilde{R}_0}^2+\vert u_{1i}\vert_{\tilde{R}_1}^2\right](t)dt+\left\vert x_i(T)-\tilde{\Gamma}_2m(T)\right\vert_{\tilde{G}}^2 \bigg\}.
\end{aligned}
\end{equation}

We formalize the auxiliary limiting problem for the leader $\mathcal{A}_0$ and the i-th follower $\mathcal{A}_i$ as follows.

Problem \textbf{(L2)}. To find a decentralized saddle point $(\bar{u}^*(\cdot),v^*(\cdot))=(\bar{u}_0^*(\cdot),\bar{u}_1^*(\cdot),v^*(\cdot))\in L_{\mathcal{G}^0}^2(0,T;\mathbb{R}^{m_L+m_F})\times\mathcal{U}_{vd}$, such that
\begin{equation*}
	J_0(\bar{u}_0^*(\cdot),\bar{u}_1^*(\cdot),v^*(\cdot))=\inf_{(\bar{u}_0(\cdot),\bar{u}_1(\cdot))\in\, L_{\mathcal{G}^0}^2(0,T;\mathbb{R}^{m_L+m_F})}\sup_{v(\cdot)\in\,\mathcal{U}_{vd}}J_0(\bar{u}_0(\cdot),\bar{u}_1(\cdot),v(\cdot)),
\end{equation*}
subjects to (\ref{limstate0}) and (\ref{limcost0}).

Problem \textbf{(F2)}. To find a decentralized strategy set $u_1^+(\cdot)=(u_{11}^+(\cdot),\cdots,u_{1N}^+(\cdot))$, where $u_{1i}^+(\cdot) \in\mathcal{U}_{id}$, such that
\begin{equation*}
\begin{aligned}
	&J_i\left(u_{0i}(u_{1i}^+)(\cdot),u_{-0i}(u_{-1i}^+)(\cdot),u_{1i}^+(\cdot),u_{-1i}^+(\cdot)\right)\\
&=\inf_{u_{1i}(\cdot)\in\,\mathcal{U}_{id}}J_i\left(u_{0i}(u_{1i})(\cdot),u_{-0i}(u_{-1i}^+)(\cdot),u_{1i}(\cdot),u_{-1i}^+(\cdot)\right),
\end{aligned}
\end{equation*}
for $1\leq i\leq N$, subjects to (\ref{limstatei}) and (\ref{limcosti}).

\begin{mydef}
\begin{itemize}
\item[(a)] For given $\xi\in\mathbb{R}^n$, Problem \textbf{(L2)} is said to be finite if the value function of Problem \textbf{(L2)} $V_0(\xi,x)$ is bounded from above and below, that is,
\begin{equation*}
	-\infty<V_0(\xi,x):=\inf_{(\bar{u}_0(\cdot),\bar{u}_1(\cdot))\in\, L_{\mathcal{G}^0}^2(0,T;\mathbb{R}^{m_L+m_F})}\sup_{v(\cdot)\in\,\mathcal{U}_{vd}}J_0(\bar{u}_0(\cdot),\bar{u}_1(\cdot),v(\cdot))<\infty.
\end{equation*}
\item[(b)] Problem \textbf{(L2)} is said to be (uniquely) solvable if there exists a (unique) saddle point $(\bar{u}^*(\cdot),v^*(\cdot))\in L_{\mathcal{G}^0}^2(0,T;\mathbb{R}^{m_L+m_F})\times \mathcal{U}_{vd}$ such that $J_0(\bar{u}_0^*(\cdot),\bar{u}_1^*(\cdot),v^*(\cdot))=V_0(\xi,x)$.
\end{itemize}
\end{mydef}

We will give a representation of cost functional $J_0(\bar{u}_0(\cdot),\bar{u}_1(\cdot),v(\cdot))$ for Problem \textbf{(L2)} based on duality theory to obtain basic properties.

\begin{mypro}\label{Prop3.1}
Let \textbf{(A1)}-\textbf{(A2)} hold. There exist two bounded self-adjoint linear operators $M_1:L_{\mathcal{G}^0}^2(0,T;\mathbb{R}^{m_L+m_F})\to L_{\mathcal{G}^0}^2(0,T;\mathbb{R}^{m_L+m_F})$, $M_4:\mathcal{U}_{vd}\to \mathcal{U}_{vd}$, bounded operator $M_2:\mathcal{U}_{vd}\to L_{\mathcal{G}^0}^2(0,T;\mathbb{R}^{m_L+m_F})$, $M_3:\mathbb{R}^{2n}\to L_{\mathcal{G}^0}^2(0,T;\mathbb{R}^{m_L+m_F})$, $M_5:\mathbb{R}^{2n}\to \mathcal{U}_{vd}$ and some $M_0\in\mathbb{R}$, depending on $\bar{\xi}$, such that
\begin{equation}\label{rep}
\begin{aligned}
	J_0(\bar{u}(\cdot),v(\cdot))&=\langle M_1(\bar{u})(\cdot),\bar{u}(\cdot)\rangle+2\langle M_2(v)(\cdot),\bar{u}(\cdot)\rangle+2\langle M_3(\bar{\xi}(\cdot),\bar{u}(\cdot))\\
    &\quad +\langle M_4(v)(\cdot),v(\cdot)\rangle+2\langle M_5(\bar{\xi})(\cdot),v(\cdot)\rangle+M_0(\bar{\xi}),
\end{aligned}
\end{equation}
where
\begin{equation*}
\begin{aligned}
	&\bar{u}(\cdot)=\mathbf{col}\left[\bar{u}_0(\cdot),\bar{u}_1(\cdot)\right],\quad \bar{\xi}=\mathbf{col}[\xi,x],\quad M_0(\bar{\xi})=\langle y^2(0),\xi\rangle+\langle p^2(0),x\rangle ,\\
\end{aligned}
\end{equation*}
\begin{equation*}
\begin{aligned}
	&M_1(\bar{u})(\cdot)=\begin{pmatrix}
		B^\top(\cdot)y^1(\cdot)+D^\top(\cdot)z^1(\cdot)+\tilde{H}^\top(\cdot)p^1(\cdot)+R_0(\cdot)\bar{u}_0(\cdot) \\
		H^\top(\cdot)y^1(\cdot)+\tilde{B}^\top(\cdot)p^1(\cdot)+R_1(\cdot)\bar{u}_1(\cdot) \\
	\end{pmatrix},\\
	&M_2(v)(\cdot)=\begin{pmatrix}
		B^\top(\cdot)y^3(\cdot)+D^\top(\cdot)z^3(\cdot)+\tilde{H}^\top(\cdot)p^3(\cdot) \\
		H^\top(\cdot)y^3(\cdot)+\tilde{B}^\top(\cdot)p^3(\cdot) \\
	\end{pmatrix},\\
	&M_3(\bar{\xi})(\cdot)=\begin{pmatrix}
		B^\top(\cdot)y^2(\cdot)+D^\top(\cdot)z^2(\cdot)+\tilde{H}^\top(\cdot)p^2(\cdot) \\
		H^\top(\cdot)y^2(\cdot)+\tilde{B}^\top(\cdot)p^2(\cdot) \\
	\end{pmatrix},\\ &M_4(v)(\cdot)=E^\top(\cdot)y^3(\cdot)-\gamma^2R_2(\cdot)v(\cdot),\quad M_5(\bar{\xi})(\cdot)=E^\top(\cdot)y^2(\cdot),
\end{aligned}
\end{equation*}
with $y^i(\cdot)$, $z^i(\cdot)$, $p^i(\cdot)$, $i=1,2,3$, satisfy the following backward-forward stochastic systems:
\begin{equation}
\left\{\begin{aligned}
	dx_0^1&=\left[Ax_0^1+B\bar{u}_0+Fm^1+H\bar{u}_1\right]dt+\left[Cx_0^1+D\bar{u}_0+Om^1\right]dW_0,\\
	\dot{m}^1&=\left(\tilde{A}+\tilde{F}\right)m^1+\tilde{B}\bar{u}_1+\tilde{H}\bar{u}_0,\\
	dy^1&=-\left[A^\top y^1+C^\top z^1+Qx_0^1-Q\Gamma_1 m^1\right]dt+z^1dW_0,\\
	dp^1&=-\left[\left(\tilde{A}+\tilde{F}\right)^\top p^1+F^\top y^1+O^\top z^1+\Gamma_1^\top Q\Gamma_1 m^1-\Gamma_1^\top Q x_0^1\right]dt+q^1dW_0,\\
	x_0^1(0)&=0,\quad m(0)=0,\\
    y^1(T)&=Gx_0^1(T)-G\Gamma_2m^1(T),\quad p^1(T)=\Gamma_2^\top G\Gamma_2m^1(T)-\Gamma_2^\top Gx_0^1(T),
\end{aligned}\right.
\end{equation}
\begin{equation}
\left\{\begin{aligned}
	dx_0^2&=\left[Ax_0^2+Fm^2\right]dt+\left[ Cx_0^2+Om^2\right]dW_0,\quad \dot{m}^2=\left(\tilde{A}+\tilde{F}\right)m^2,\\
    dy^2&=-\left[A^\top y^2+C^\top z^2+Qx_0^2-Q\Gamma_1 m^2\right]dt+z^2dW_0,\\
    dp^2&=-\left[\left(\tilde{A}+\tilde{F}\right)^\top p^2+F^\top y^2+O^\top z^2+\Gamma_1^\top Q\Gamma_1 m^2-\Gamma_1^\top Q x_0^2\right]dt+q^2dW_0,\\
    x_0^2(0)&=\xi,\quad m^2(0)=x,\\
    y^2(T)&=Gx_0^2(T)-G\Gamma_2m^2(T),\quad p^2(T)=\Gamma_2^\top G\Gamma_2m^2(T)-\Gamma_2^\top Gx_0^2(T),
\end{aligned}\right.
\end{equation}
\begin{equation}
\left\{\begin{aligned}
	dx_0^3&=\left[Ax_0^3+Ev\right]dt+Cx_0^3dW_0,\\
	dy^3&=-\left[A^\top y^3+C^\top z^3+Qx_0^3\right]dt+z^3dW_0,\\
	dp^3&=-\left[\left(\tilde{A}+\tilde{F}\right)^\top p^3+F^\top y^3+O^\top z^3-\Gamma_1^\top Q x_0^3\right]dt+q^3dW_0,\\
	x_0^3(0)&=0,\quad y^3(T)=Gx_0^3(T),\quad p^3(T)=-\Gamma_2^\top Gx_0^3(T).
\end{aligned}\right.
\end{equation}
\end{mypro}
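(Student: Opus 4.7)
The plan is to exploit the joint linearity of the forward state system (\ref{limstate0}) in the inputs $(\bar{u}_0,\bar{u}_1,v)$ and the initial datum $\bar{\xi}=\mathbf{col}[\xi,x]$, and then to convert every bilinear cross term appearing in the quadratic cost $J_0$ into a pairing against a suitably chosen adjoint process. First I would set $x_0=x_0^1+x_0^2+x_0^3$ and $m=m^1+m^2$, where superscript $1$ collects the contribution of $\bar{u}=\mathbf{col}[\bar{u}_0,\bar{u}_1]$, superscript $2$ that of $\bar{\xi}$, and superscript $3$ that of $v$; note that there is no $m^3$ because $v$ does not drive the ODE for $m$. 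Substituting this additive decomposition into the running and terminal costs of (\ref{limcost0}) and expanding the squares yields three pure quadratic pieces in $\bar{u}$, $\bar{\xi}$, $v$ respectively, three symmetric bilinear cross-pairings among them, and the remaining control penalties $|\bar{u}_0|_{R_0}^2+|\bar{u}_1|_{R_1}^2-\gamma^2|v|_{R_2}^2$.

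Next I would process each cross pairing by It\^o's formula applied to $\langle y^i(t),x_0^j(t)\rangle+\langle p^i(t),m^j(t)\rangle$ on $[0,T]$. The terminal conditions $y^i(T)=Gx_0^i(T)-G\Gamma_2 m^i(T)$ and $p^i(T)=\Gamma_2^\top G\Gamma_2 m^i(T)-\Gamma_2^\top Gx_0^i(T)$ are engineered so that, after expanding $|x_0(T)-\Gamma_2 m(T)|_G^2$ under the decomposition above, the terminal boundary term from It\^o's formula reproduces the terminal cross-pairing exactly. Similarly, the backward drift coefficient $-A^\top y-C^\top z-Qx_0+Q\Gamma_1 m$ and its symmetric counterpart for $p$ are designed so that the running contribution from the forward drift cancels the running cross term $2\langle Q(x_0^i-\Gamma_1 m^i),x_0^j-\Gamma_1 m^j\rangle$. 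What survives in the resulting integral identity is, for each cross pairing, a linear functional of the generating input: against $\bar{u}_0$ one collects $B^\top y+D^\top z+\tilde{H}^\top p$, against $\bar{u}_1$ one collects $H^\top y+\tilde{B}^\top p$, against $v$ one collects $E^\top y$, while the initial-data boundary term contributes the constant $M_0(\bar{\xi})=\langle y^2(0),\xi\rangle+\langle p^2(0),x\rangle$. Adding back the quadratic control penalties $R_0,R_1,-\gamma^2 R_2$ and regrouping the coefficients of $\bar{u}$ and $v$ then identifies the operators $M_1,\ldots,M_5$ exactly in the form displayed in the statement.

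Finally, boundedness of each $M_j$ follows from the standard $L^2$ a priori estimates for the three linear forward-backward systems above under assumptions \textbf{(A1)}-\textbf{(A2)}, which give continuous dependence of $(y^i,z^i,p^i,q^i)$ on the respective inputs in the appropriate $L^2$ norms. Self-adjointness of $M_1$ and $M_4$ reduces, by polarisation, to the symmetry of the corresponding quadratic forms in $J_0$, which is manifest from the symmetric roles of the two copies of the input in the expansion. I expect the main obstacle to be the careful bookkeeping of the It\^o cross terms: one must verify that every inhomogeneous contribution appearing in the forward dynamics of $x_0^1$ and $m^1$ (such as $Fm^1$, $H\bar{u}_1$, and $\tilde{H}\bar{u}_0$) is absorbed by a matching inhomogeneous term in the generator for $(y^1,p^1)$, leaving only the clean pairings against $\bar{u}$. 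The coupling between $x_0$ and $m$ via $F$ and the weights $\Gamma_1,\Gamma_2$ is precisely what forces the $(y^i,p^i)$-system to be jointly coupled, and explains why both $Q\Gamma_1 m$-type and $\Gamma_1^\top Q x_0$-type terms appear cross-wise in the backward generators.
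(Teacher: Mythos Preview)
Your proposal is correct and follows precisely the standard duality argument that the paper invokes (it cites Chapter~6 of Yong--Zhou and omits all details): decompose the linear forward system by superposition into the $\bar{u}$-, $\bar{\xi}$-, and $v$-driven pieces, introduce the matching adjoint BSDEs, and use It\^o's formula on $\langle y^i,x_0^j\rangle+\langle p^i,m^j\rangle$ to rewrite each bilinear block of the expanded cost as a pairing against the displayed operator expressions. Your bookkeeping of which terms survive (the $B^\top y+D^\top z+\tilde{H}^\top p$, $H^\top y+\tilde{B}^\top p$, $E^\top y$ coefficients and the initial boundary term $\langle y^2(0),\xi\rangle+\langle p^2(0),x\rangle$) is exactly right, as is the observation that $m^3\equiv 0$ since $v$ does not enter the ODE for $m$.
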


\begin{proof}
The proof is similar as Chapter 6 of \cite{Yong99}, by using duality theory, we omit the details.
\end{proof}

For notational simplicity, we use $\langle\cdot,\cdot\rangle$ to denote all inner products in different Hilbert spaces which can be identified from the context. From the representation (\ref{rep}) of Proposition \ref{Prop3.1}, one has the following result, which is concerned with the convexity of the cost functional $J_0$ and the solvability of Problem \textbf{(L2)}.

\begin{mypro}\label{Prop3.2}
Let \textbf{(A1)}-\textbf{(A2)} hold.
\begin{itemize}
\item[(a)] Problem \textbf{(L2)} is finite only if $M_1\geq0$, $M_4\leq0$.
\item[(b)] Problem \textbf{(L2)} is (uniquely) solvable if and only if Problem \textbf{(L2)} satisfies the convexity-concavity condition $M_1\geq0$, $M_4\leq0$ and the following stationarity condition holds: there exists a (unique) $(\bar{u}^*(\cdot),v^*(\cdot))\in L_{\mathcal{G}^0}^2(0,T;\mathbb{R}^{m_L+m_F})\times \mathcal{U}_{vd}$ such that
\begin{equation}\label{staionary1}
\left\{
\begin{aligned}
	&M_1(\bar{u}^*)(\cdot)+M_2(v^*)(\cdot)+M_3(\bar{\xi})(\cdot)=0,\\
	&M_4(v^*)(\cdot)+M_2^*(\bar{u}^*)(\cdot)+M_5(\bar{\xi})(\cdot)=0.
\end{aligned}
\right.
\end{equation}
Moreover, (\ref{staionary1}) implies that $\mathcal{R}(M_2(v^*)+M_3(\bar{\xi}))\subset\mathcal{R}(M_1(\bar{u}^*))$, $\mathcal{R}(M_2^*(\bar{u}^*)+M_5(\bar{\xi}))\subset\mathcal{R}(M_4(v^*))$, where $\mathcal{R}(S)$ stands for the range of operator matrix $S$.
\item[(c)] Problem \textbf{(L2)} is uniformly convex-concave (i.e., $M_1\gg0$, $M_4\ll0$), then Problem \textbf{(L2)} admits a unique saddle point given by
\begin{equation}\label{rep'}
\left\{
\begin{aligned}
	\bar{u}^*(\cdot)&=-M_1^{-1}(M_2(v^*)+M_3(\bar{\xi}))(\cdot),\\
    v^*(\cdot)&=-M_4^{-1}(M_2^*(\bar{u}^*)+M_5(\bar{\xi}))(\cdot).
\end{aligned}
\right.
\end{equation}
Moreover,
\begin{equation}\label{rep''}
\left\{
\begin{aligned}
	\bar{u}^*(\cdot)&=-(M_1-M_2M_4^{-1}M_2^*)^{-1}(M_3-M_2M_4^{-1}M_5)(\bar{\xi})(\cdot),\\
	v^*(\cdot)&=[M_4^{-1}M_2^*(M_1-M_2M_4^{-1}M_2^*)^{-1}(M_3-M_2M_4^{-1}M_5)-M_4^{-1}M_5](\bar{\xi})(\cdot).
\end{aligned}
\right.
\end{equation}
\end{itemize}
\end{mypro}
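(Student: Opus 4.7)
The plan is to exploit the quadratic structure given by Proposition \ref{Prop3.1}: since $J_0$ is a sum of quadratic, bilinear, and linear forms in $(\bar u,v)$ with affine perturbations depending only on $\bar\xi$, the zero-sum problem reduces to an inf-sup for a Hilbert-space quadratic functional, and all three statements will follow from standard linear-quadratic zero-sum game reasoning applied to the operators $M_1,\dots,M_5$.

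For part (a), I will argue by contradiction. If $M_1\not\geq 0$, pick $\bar u_0\in L^2_{\mathcal{G}^0}(0,T;\mathbb{R}^{m_L+m_F})$ with $\langle M_1\bar u_0,\bar u_0\rangle<0$. Because $M_4\leq 0$ is separately forced by the sup, I first handle $M_4$: if there exists $v_0\in\mathcal{U}_{vd}$ with $\langle M_4 v_0,v_0\rangle>0$, then for any fixed $\bar u$ the map $t\mapsto J_0(\bar u,tv_0)$ is an upward-opening quadratic in $t$, so $\sup_v J_0(\bar u,v)=+\infty$ for every $\bar u$, forcing $V_0=+\infty$. Once $M_4\leq 0$ is established, for any $\bar u$ the concave quadratic $v\mapsto J_0(\bar u,v)$ has a finite supremum that is itself a quadratic form in $\bar u$ whose leading operator is $M_1-M_2M_4^{\dagger}M_2^{*}$ (on the appropriate subspace); using $M_1\not\geq 0$ together with a scaling $\bar u=t\bar u_0$, $v=0$ yields $\inf_{\bar u}\sup_v J_0=-\infty$. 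These two scaling arguments give necessity.

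For part (b), I will use that under the convexity-concavity condition, $\bar u\mapsto J_0(\bar u,v)$ is convex and $v\mapsto J_0(\bar u,v)$ is concave, so by Gâteaux differentiation in each argument, $(\bar u^{*},v^{*})$ is a saddle point if and only if the first-order conditions hold. Computing the Gâteaux derivatives of (\ref{rep}) and using $\langle M_2 v,\bar u\rangle=\langle v,M_2^{*}\bar u\rangle$ gives exactly the coupled stationarity system (\ref{staionary1}). Conversely, if (\ref{staionary1}) admits a solution $(\bar u^{*},v^{*})$, then $J_0(\bar u,v^{*})-J_0(\bar u^{*},v^{*})=\langle M_1(\bar u-\bar u^{*}),\bar u-\bar u^{*}\rangle\geq 0$ and $J_0(\bar u^{*},v^{*})-J_0(\bar u^{*},v)=-\langle M_4(v-v^{*}),v-v^{*}\rangle\geq 0$, confirming the saddle-point inequality; uniqueness of the saddle value, and of $(\bar u^{*},v^{*})$ up to the kernels, follows from strict inequalities under uniform convexity-concavity. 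The range inclusions $\mathcal{R}(M_2(v^{*})+M_3(\bar\xi))\subset\mathcal{R}(M_1)$ and $\mathcal{R}(M_2^{*}(\bar u^{*})+M_5(\bar\xi))\subset\mathcal{R}(M_4)$ are a direct read-off of (\ref{staionary1}) as linear equations for $\bar u^{*}$ and $v^{*}$.

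For part (c), the uniform conditions $M_1\gg 0$ and $M_4\ll 0$ make $M_1$ and $M_4$ boundedly invertible on the respective Hilbert spaces, so (\ref{staionary1}) can be inverted directly to yield (\ref{rep'}). Substituting the second equation of (\ref{rep'}) into the first, I obtain $(M_1-M_2M_4^{-1}M_2^{*})\bar u^{*}=-(M_3-M_2M_4^{-1}M_5)\bar\xi$; the Schur complement $M_1-M_2M_4^{-1}M_2^{*}$ is itself uniformly positive because $M_4^{-1}\ll 0$ makes $-M_2M_4^{-1}M_2^{*}\geq 0$, so it is invertible, yielding the first line of (\ref{rep''}). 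Back-substitution into $v^{*}=-M_4^{-1}(M_2^{*}\bar u^{*}+M_5\bar\xi)$ produces the second line. The main obstacle I anticipate is the careful handling of part (a) and the necessity portion of part (b): one must verify that the bilinear coupling through $M_2$ does not disguise a loss of finiteness, which is why I will first isolate the necessity of $M_4\leq 0$ (which makes $\sup_v$ finite pointwise) before arguing about $M_1\geq 0$.
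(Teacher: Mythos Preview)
Your arguments for (b) and (c) coincide with the paper's. The paper likewise expands $J_0(\bar u^*+\lambda\bar u,v^*)$ and $J_0(\bar u^*,v^*+\lambda v)$ as quadratics in $\lambda$, reads off the stationarity conditions from the linear term and the sign conditions $M_1\geq0$, $M_4\leq0$ from the quadratic term, and for (c) inverts the stationarity system and back-substitutes; your observation that $M_1-M_2M_4^{-1}M_2^*\gg0$ because $-M_2M_4^{-1}M_2^*\geq0$ is exactly what the paper invokes to justify that last inversion.

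There is, however, a genuine gap in your treatment of part (a), specifically the necessity of $M_1\geq0$. You claim that once $M_4\leq0$ is known, ``a scaling $\bar u=t\bar u_0$, $v=0$ yields $\inf_{\bar u}\sup_v J_0=-\infty$.'' But fixing $v=0$ produces only a \emph{lower} bound $J_0(t\bar u_0,0)\leq\sup_v J_0(t\bar u_0,v)$, and a lower bound tending to $-\infty$ says nothing about the supremum. Indeed, as you yourself note, the leading operator of $\bar u\mapsto\sup_v J_0(\bar u,v)$ is the Schur-type object $M_1-M_2M_4^\dagger M_2^*$; since $-M_2M_4^\dagger M_2^*\geq0$, this can be nonnegative even when $M_1$ is not (take $J_0(\bar u,v)=-|\bar u-v|^2$, where $M_1=-I$ yet $\sup_v J_0\equiv0$ and the upper value is $0$). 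Hence $M_1\not\geq0$ does not by itself force the upper value to $-\infty$, and your scaling argument breaks. The paper omits the proof of (a) entirely (``The proof is regular, thus we omit it here''), so you are not missing a device from the text; but be aware that finiteness of the upper value $\inf_{\bar u}\sup_v J_0$ alone is in general \emph{not} sufficient to force $M_1\geq0$---one typically needs finiteness of both the upper and lower values, or directly the solvability hypothesis of part (b), to draw that conclusion.
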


\begin{proof}
The first result can be proved by contradiction. The proof is regular, thus we omit it here.
Let us prove the second result. If Problem \textbf{(L2)} is uniquely solvable with the saddle point $(\bar{u}^*(\cdot),v^*(\cdot))$. By the representations (\ref{rep}) of the cost functional $J_0$, for any $\lambda\in\mathbb{R}$ and $(\bar{u}(\cdot),v(\cdot))\in L_{\mathcal{G}^0}^2(0,T;\mathbb{R}^{m_L+m_F})\times \mathcal{U}_{vd}$, we have
\begin{equation*}
\begin{aligned}
	&J_0\left(\bar{u}^*(\cdot)+\lambda\bar{u}(\cdot),v^*(\cdot)\right)=\langle M_1(\bar{u}^*+\lambda\bar{u}),\bar{u}^*+\lambda\bar{u}\rangle+2\langle M_2(v^*),\bar{u}^*+\lambda\bar{u}\rangle\\
	&\qquad +2\langle M_3(\bar{\xi}),\bar{u}^*+\lambda\bar{u}\rangle+\langle M_4(v^*),v^*\rangle+2\langle M_5(\bar{\xi}),v^*\rangle+M_0(\bar{\xi})\\
	&=J_0(\bar{u}^*(\cdot),v^*(\cdot))+\lambda^2\langle M_1(\bar{u}),\bar{u}\rangle+2\lambda\langle M_1(\bar{u}^*)+M_2(v^*)+M_3(\bar{\xi}),\bar{u}\rangle.
\end{aligned}
\end{equation*}
Therefore, $\bar{u}^*(\cdot)$ is an open-loop optimal control with the cost functional $J_0(\bar{u}(\cdot),v^*(\cdot))$ and the corresponding state equation if and only if
\begin{equation*}
    J_0(\bar{u}^*(\cdot)+\lambda\bar{u}(\cdot),v^*(\cdot))-J_0(\bar{u}^*(\cdot),v^*(\cdot))\geq0,\quad\forall\lambda\in\mathbb{R},\;\bar{u}(\cdot)\in L_{\mathcal{G}^0}^2(0,T;\mathbb{R}^{m_L+m_F}).
\end{equation*}
Thus,
\begin{equation*}
	\lambda^2\langle M_1(\bar{u}),\bar{u}\rangle+2\lambda\langle M_1(\bar{u}^*)+M_2(v^*)+M_3(\bar{\xi}),\bar{u}\rangle
\end{equation*}
is a nonnegative and quadratic function of $\lambda$. Since $\bar{u}(\cdot)$ is arbitrary, we must have
\begin{equation*}
	M_1\geq0,\quad M_1(\bar{u}^*)+M_2(v^*)+M_3(\bar{\xi})=0.
\end{equation*}
Similarly,
\begin{equation*}
\begin{aligned}
	&J_0(\bar{u}^*(\cdot),v^*(\cdot)+\lambda v(\cdot))\\
	&=\langle M_1(\bar{u}^*),\bar{u}^*\rangle+2\langle M_2(v^*+\lambda v),\bar{u}^*\rangle+2\langle M_3(\bar{\xi}),\bar{u}^*\rangle+\langle M_4(v^*+\lambda v),v^*+\lambda v\rangle\\
	&\qquad +2\langle M_5(\bar{\xi}),v^*+\lambda v\rangle+M_0(\bar{\xi})\\
	&=J_0(\bar{u}^*(\cdot),v^*(\cdot))+\lambda^2\langle M_4(v),v\rangle+2\lambda\langle M_4(v^*)+M_2^*(\bar{u}^*)+M_5(\bar{\xi}),v\rangle,
\end{aligned}
\end{equation*}
where $M_2^*$ denotes the adjoint operators of bounded operator $M_2$. According to the definition of open-loop saddle point, $v^*(\cdot)$ is an open-loop optimal control for fixed $\bar{u}^*(\cdot)$ if and only if
\begin{equation*}
	J_0(\bar{u}^*(\cdot),v^*(\cdot)+\lambda v(\cdot))-J_0(\bar{u}^*(\cdot),v^*(\cdot))\leq0,\quad\forall\lambda\in\mathbb{R},\;v(\cdot)\in \mathcal{U}_{vd}.
\end{equation*}
Thus,
\begin{equation*}
	\lambda^2\langle M_4(v),v\rangle+2\lambda\langle M_4(v^*)+M_2^*(\bar{u}^*)+M_5(\bar{\xi}),v\rangle\leq0,\quad\forall\lambda\in\mathbb{R},\;v(\cdot)\in \mathcal{U}_{vd}.
\end{equation*}
Thus, we must have
\begin{equation*}
	M_4\leq0,\quad M_1(\bar{u}^*)+M_2(v^*)+M_3(\bar{\xi})=0.
\end{equation*}
Hence, the second result is derived.

Moreover, if $M_1\gg0$, $M_4\ll0$ hold, and $J_0$ is continuous in $\bar{u}(\cdot)$ and $v(\cdot)$, there exists a unique $(\bar{u}^*(\cdot),v^*(\cdot))$ such that $J_0(\bar{u}^*(\cdot),v^*(\cdot))=\inf_{\bar{u}(\cdot)\in\, L_{\mathcal{G}^0}^2(0,T;\mathbb{R}^{m_L+m_F})}J_0(\bar{u}(\cdot),v^*(\cdot))$ and $J_0(\bar{u}^*(\cdot),v^*(\cdot))=\sup_{v(\cdot)\in\, \mathcal{U}_{vd}}J_0(\bar{u}^*(\cdot),v(\cdot))$. By (\ref{rep}) of the second result, we can obtain (\ref{rep'}). Solving for $v^*(\cdot)$ from the second one (uniquely) of (\ref{staionary1}), and substituting it into the first one of (\ref{staionary1}), we obtain the single relation
\begin{equation*}
	\left(M_1-M_2M_4^{-1}M_2^*\right)(\bar{u}^*)+\left(M_3-M_2M_4^{-1}M_5\right)(\bar{\xi})=0,
\end{equation*}
which admits a unique solution, since $M_1-M_2M_4^{-1}M_2^*\gg0$ and thereby invertible. Then we get the first equation of (\ref{rep''}), substituting it into the second one of (\ref{rep'}), the representation of $v^*(\cdot)$ can be derived. The proof is complete.
\end{proof}

We give the following assumption.\\
\textbf{(A3)} $Q\geq0$, $G\geq0$, $R_0\gg0$, $R_1\gg0$, $R_2\gg0$.

\begin{Remark}
Under the assumption \textbf{(A3)}, $\bar{u}(\cdot)\mapsto J_0(\bar{u}(\cdot),v(\cdot))$ is uniformly convex (i.e., $M_1\gg0$).
\end{Remark}
Next, we would like to discuss the uniform concavity of the cost functional $J_0$ in $v$ for every fixed $\bar{u}(\cdot)\in L_{\mathcal{G}^0}^2(0,T;\mathbb{R}^{m_L+m_F})$, which is precisely the condition
of existence of a unique solution to the linear-quadratic optimal control problem:
\begin{equation*}
	\sup_{v(\cdot)\in\,\mathcal{U}_{vd}}J_0(\bar{u}(\cdot),v(\cdot)).
\end{equation*}

\begin{mycor}\label{cor3.1}
Let \textbf{(A1)}-\textbf{(A2)} hold. For $\bar{u}(\cdot)\in L_{\mathcal{G}^0}^2(0,T;\mathbb{R}^{m_L+m_F})$ and $\bar{\xi}\in\mathbb{R}^{2n}$, the following statements are equivalent:

(\romannumeral1) $v(\cdot)\mapsto J_0(\bar{u}(\cdot),v(\cdot))$ is uniformly concave.

(\romannumeral2) $M_4\ll0$.

(\romannumeral3) $J_0'(v(\cdot)):=\langle M_4(v)(\cdot),v(\cdot)\rangle=\mathbb{E}\int_0^T\left\langle E^\top y^3-\gamma^2R_2v,v\right\rangle(t)dt \leq -\alpha \mathbb{E}\int_0^T\vert v(t)\vert^2dt$, $\forall v(\cdot)\in \mathcal{U}_{vd}$, for some $\alpha>0$.
\end{mycor}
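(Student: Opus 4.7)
The plan is to exploit the quadratic representation (\ref{rep}) supplied by Proposition \ref{Prop3.1}. Once $\bar{u}(\cdot)$ and $\bar{\xi}$ are fixed, the map $v(\cdot)\mapsto J_0(\bar{u}(\cdot),v(\cdot))$ is the sum of a constant, a continuous linear term in $v$ (namely $2\langle M_2(v),\bar{u}\rangle+2\langle M_5(\bar{\xi}),v\rangle$), and the pure quadratic $\langle M_4(v),v\rangle$. Thus the concavity behavior is controlled entirely by $M_4$, which is bounded and self-adjoint by Proposition \ref{Prop3.1}.

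For (i) $\Leftrightarrow$ (ii) I would carry out a direct Taylor-type expansion: for arbitrary $w(\cdot)\in\mathcal{U}_{vd}$ and $\lambda\in\mathbb{R}$, plug $v+\lambda w$ into (\ref{rep}) and collect powers of $\lambda$. The coefficient of $\lambda^2$ is exactly $\langle M_4(w),w\rangle$. Uniform concavity with parameter $\alpha>0$ is equivalent to this coefficient being bounded above by $-\alpha\|w\|_{L^2}^2$ uniformly in $w$, which is the very definition of $M_4\ll 0$. The converse direction follows by writing $J_0(\bar u,\cdot)$ as the sum of a concave quadratic with modulus $\alpha$ and a continuous affine term, and applying the parallelogram-type identity for quadratic forms to verify the strong concavity inequality pointwise.

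For (ii) $\Leftrightarrow$ (iii) the argument is simply to substitute the explicit formula $M_4(v)(\cdot)=E^\top(\cdot)y^3(\cdot)-\gamma^2 R_2(\cdot)v(\cdot)$ from Proposition \ref{Prop3.1} and pair it with $v$ in $L_{\mathcal{G}^0}^2$, obtaining
\begin{equation*}
\langle M_4(v),v\rangle=\mathbb{E}\int_0^T\langle E^\top y^3-\gamma^2R_2 v,v\rangle(t)\,dt.
\end{equation*}
The inequality in (iii) is then nothing more than the unpacked definition of $M_4\ll 0$.

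I do not anticipate a serious obstacle; the only points requiring attention are (a) verifying that $v\mapsto y^3$ is linear, so that $\langle M_4(v),v\rangle$ is genuinely a quadratic form in $v$ --- this is immediate because the forward SDE for $x_0^3$ has drift $Ev$ with zero initial data and the BSDE system for $(y^3,z^3)$ is linear with terminal $Gx_0^3(T)$, giving a bounded linear solution map $v\mapsto y^3$; and (b) using the self-adjointness of $M_4$ asserted in Proposition \ref{Prop3.1} to identify the operator from its diagonal quadratic form. Both are routine and leave no substantial analytical difficulty.
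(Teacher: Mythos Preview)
Your proposal is correct and is precisely the argument the paper has in mind: the corollary is stated without proof because it follows immediately from the quadratic representation (\ref{rep}) of Proposition \ref{Prop3.1}, exactly via the second-variation computation and the explicit formula for $M_4$ that you outline. There is nothing to add.
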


\begin{mypro}\label{Prop3.3}
For each fixed $\gamma>0$, the cost functional $J_0(\bar{u}(\cdot),v(\cdot))$, and under the state equation (\ref{limstate0}), is uniformly concave in $v(\cdot)\in \mathcal{U}_{vd}$ for $\bar{u}(\cdot)\in L_{\mathcal{G}^0}^2(0,T;\mathbb{R}^{m_L+m_F})$ if and only if the following generalized Riccati differential equation exists a unique symmetric nonnegative-definite solution
\begin{equation}\label{K}
\left\{\begin{aligned}
	&\dot{K}+KA+A^\top K+C^\top KC+Q+\gamma^{-2}KER_2^{-1}E^\top K=0,\\
	&K(T)=G.
\end{aligned}\right.
\end{equation}
\end{mypro}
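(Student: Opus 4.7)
The plan is to reduce the uniform concavity question to a standard (indefinite) stochastic LQ problem in $v$ alone, so that a classical Riccati characterisation can be invoked.

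First I would fix $\bar{u}(\cdot)\in L_{\mathcal{G}^0}^2(0,T;\mathbb{R}^{m_L+m_F})$ and decompose the leader's state as $x_0=x_0^a+x_0^3$, where $x_0^a$ collects the part that depends only on $\bar{u}$ and the initial datum (and so does not involve $v$), and $x_0^3$ is the pure $v$-response governed by $dx_0^3=(Ax_0^3+Ev)dt+Cx_0^3\,dW_0$, $x_0^3(0)=0$ (the first line of the last system in Proposition \ref{Prop3.1}). Since $m(\cdot)$ is also independent of $v$, expanding the squares in $J_0$ isolates the purely $v$-quadratic part as
\begin{equation*}
J_0^{(2)}(v):=\mathbb{E}\bigg\{\int_0^T\big[|x_0^3|_Q^2-\gamma^2|v|_{R_2}^2\big](t)dt+|x_0^3(T)|_G^2\bigg\},
\end{equation*}
and a direct It\^{o} calculation on $\langle y^3,x_0^3\rangle$ using the BSDE for $y^3$ in Proposition \ref{Prop3.1} will verify $J_0^{(2)}(v)=\langle M_4(v),v\rangle$. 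By Corollary \ref{cor3.1}, uniform concavity of $v\mapsto J_0(\bar{u},v)$ is then exactly the uniform bound $J_0^{(2)}(v)\leq-\alpha\,\mathbb{E}\int_0^T|v(t)|^2 dt$ for some $\alpha>0$, so the proposition reduces to characterising this uniform bound in terms of the solvability of (\ref{K}).

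For the sufficiency direction I would suppose that (\ref{K}) has a nonnegative symmetric solution $K$ on $[0,T]$ and apply It\^{o}'s formula to $\langle Kx_0^3,x_0^3\rangle$; substituting (\ref{K}) and completing the square gives the clean identity
\begin{equation*}
J_0^{(2)}(v)=-\gamma^2\,\mathbb{E}\int_0^T\big|v-\gamma^{-2}R_2^{-1}E^\top Kx_0^3\big|_{R_2}^2(t) dt,
\end{equation*}
which already yields $J_0^{(2)}\leq 0$ but not the required uniform bound. To upgrade it I would perturb the control weight: for $\alpha>0$ small enough, $\gamma^2R_2-\alpha I\gg 0$ and the perturbed Riccati
\begin{equation*}
\dot{K}^\alpha+K^\alpha A+A^\top K^\alpha+C^\top K^\alpha C+Q+K^\alpha E(\gamma^2R_2-\alpha I)^{-1}E^\top K^\alpha=0,\ K^\alpha(T)=G,
\end{equation*}
still admits a nonnegative solution on $[0,T]$ by continuous dependence of the matrix Riccati flow on its coefficients (the unperturbed $K$ is bounded on the compact interval). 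Replaying the square-completion argument with $K^\alpha$ and the modified weight would then give $J_0^{(2)}(v)+\alpha\,\mathbb{E}\int_0^T|v|^2 dt\leq 0$, i.e.\ uniform concavity with constant $\alpha$.

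For the necessity direction I would observe that uniform concavity of $J_0^{(2)}$ in $v$ is the same as uniform convexity of $-J_0^{(2)}$, and that $(x_0^3,v,-J_0^{(2)})$ constitutes a (generally indefinite) stochastic LQ problem with state weight $-Q\leq 0$, terminal weight $-G\leq 0$, and uniformly positive control weight $\gamma^2R_2$. The standard existence theorem for such uniformly convex stochastic LQ problems (Chapter 6 of \cite{Yong99}) then yields a unique symmetric solution $P(\cdot)$ of
\begin{equation*}
\dot{P}+PA+A^\top P+C^\top PC-Q-\gamma^{-2}PER_2^{-1}E^\top P=0,\quad P(T)=-G,
\end{equation*}
and setting $K:=-P$ recovers (\ref{K}) uniquely. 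To see $K\geq 0$ I would use value-function comparison: $\langle P(t)x,x\rangle$ equals $\inf_v(-J_0^{(2),t,x}(v))$, and the admissible choice $v\equiv 0$ produces a nonpositive cost because $Q,G\geq 0$ by \textbf{(A3)}, so $P\leq 0$ and $K\geq 0$ on $[0,T]$.

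The hard part will be the Riccati-perturbation step in the sufficiency argument, namely proving that $K^\alpha$ exists on all of $[0,T]$ and varies continuously in $\alpha$ near $0$. Because the unperturbed $K$ is bounded on the compact interval, this reduces to a standard continuity result for matrix Riccati ODEs (linearisation around $K$ yields a small bounded inhomogeneity and rules out finite-time blow-up for small $\alpha$), but it is the one non-mechanical input in the whole argument.
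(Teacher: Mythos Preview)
Your proposal is correct, but both directions differ from the paper's proof. For sufficiency, the paper also reaches the square-completion identity $J_0^{(2)}(v)=-\gamma^2\,\mathbb{E}\int_0^T|v-\gamma^{-2}R_2^{-1}E^\top Kx_0^3|_{R_2}^2\,dt$, but instead of perturbing the Riccati equation it introduces the bounded linear operator $\mathcal{L}v:=v-\gamma^{-2}R_2^{-1}E^\top Kx_0^3$ on $\mathcal{U}_{vd}$, shows explicitly that $\mathcal{L}$ is bijective (the inverse is obtained by solving the closed-loop state equation), and invokes the bounded inverse theorem to get $\|v\|\leq\|\mathcal{L}^{-1}\|\,\|\mathcal{L}v\|$, which immediately upgrades the nonpositivity to the uniform bound $J_0^{(2)}(v)\leq-\delta\gamma^2\|\mathcal{L}^{-1}\|^{-2}\mathbb{E}\int_0^T|v|^2\,dt$. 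This avoids entirely the continuous-dependence argument for $K^\alpha$ that you flagged as the hard step. For necessity, the paper simply cites the stochastic bounded real lemma (Lemma 8.2.1 of \cite{Petersen00}) rather than re-deriving Riccati solvability from uniform convexity via LQ theory; your route is more self-contained but relies on the (true but not literally stated in Chapter 6 of \cite{Yong99}) fact that uniform convexity of an indefinite stochastic LQ cost forces Riccati solvability, so you may want to sharpen that citation. In short, the paper trades your perturbation machinery for a clean operator-theoretic argument in one direction and an external lemma in the other.
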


\begin{proof}
In fact, the necessity is the {\it stochastic bounded real lemma} proved in Lemma 8.2.1 of \cite{Petersen00}.
Let us prove the sufficiency. Consider the following auxiliary cost functional:
\begin{equation*}
	J_0'(v(\cdot))=\mathbb{E}\int_0^T\left\langle E^\top y^3-\gamma^2R_2v,v\right\rangle(t) dt,
\end{equation*}
subject to
\begin{equation*}
\left\{\begin{aligned}
	dx_0^3(t)&=\left[Ax_0^3+Ev\right]dt+Cx_0^3dW_0(t),\\
	dy^3(t)&=-\left[A^\top y^3+C^\top z^3+Qx_0^3\right]dt+z^3dW_0(t),\\
	x_0^3(0)&=0,\quad y^3(T)=Gx_0^3(T).
\end{aligned}\right.
\end{equation*}
Applying It\^{o}'s formula to $\langle x_0^3(\cdot),y^3(\cdot)\rangle$, we can get
\begin{equation}\label{J_0''}
	J_0''(v(\cdot)):=-J_0'(v(\cdot))=\mathbb{E}\bigg\{\int_0^T\left[-\langle Q x_0^3,x_0^3\rangle+\gamma^2\langle R_2v,v\rangle\right](t) dt-\langle Gx_0^3(T),x_0^3(T)\rangle\bigg\},
\end{equation}
subject to
\begin{equation}
\left\{\begin{aligned}
	dx_0^3(t)&=\left[Ax_0^3+Ev\right]dt+Cx_0^3dW_0(t),\\
	x_0^3(0)&=0.
\end{aligned}\right.
\end{equation}
Since Riccati equation (\ref{K}) is solvable, applying It\^{o}'s formula to $\langle K(\cdot)x_0^3(\cdot),x_0^3(\cdot)\rangle$, integrating both sides on $[0, T]$, taking expectation and substituting it into (\ref{J_0''}), we have
\begin{equation*}
\begin{aligned}
	J_0''(v(\cdot))&=\gamma^2\mathbb{E}\int_0^T\left\vert v-\gamma^{-2}R_2^{-1}E^\top Kx_0^3\right\vert_{R_2}^2(t)dt\\
	&\geq\delta\gamma^2\mathbb{E}\int_0^T\left\vert v-\gamma^{-2}R_2^{-1}E^\top Kx_0^3\right\vert^2(t)dt,
\end{aligned}
\end{equation*}
where $R_2\geq\delta I$. Now we define a bounded linear operator $\mathcal{L}:\mathcal{U}_{vd}\mapsto \mathcal{U}_{vd}$ by
\begin{equation*}
	\mathcal{L}v:=v-\gamma^{-2}R_2^{-1}E^\top Kx_0^3.
\end{equation*}
It is easy to see that $\mathcal{L}$ is bijective, and that its inverse is given by
\begin{equation*}
	\mathcal{L}^{-1}v=v+\gamma^{-2}R_2^{-1}E^\top Kx_0^{(v)},
\end{equation*}
where $x_0^{(v)}$ is the solution to
\begin{equation*}
\left\{\begin{aligned}
	dx_0^{(v)}(t)&=\left[\left(A+\gamma^{-2}ER_2^{-1}E^\top K\right)x_0^{(v)}+Ev\right]dt+Cx_0^{(v)}dW_0(t),\\
	x_0^{(v)}(0)&=0.
\end{aligned}\right.
\end{equation*}
By the bounded inverse theorem, $\mathcal{L}^{-1}$ is bounded with $\Vert\mathcal{L}^{-1}\Vert>0$. Therefore,
\begin{equation*}
	\mathbb{E}\int_0^T\vert v(t)\vert^2dt=\mathbb{E}\int_0^T\vert( \mathcal{L}^{-1}\mathcal{L}v)(t)\vert^2dt\leq\Vert\mathcal{L}^{-1}\Vert^2\mathbb{E}\int_0^T\vert(\mathcal{L}v)(t)\vert^2dt.
\end{equation*}
Combining the two inequalities mentioned above, we obtain
\begin{equation*}
	J_0''(v(\cdot))\geq\delta\gamma^2\mathbb{E}\int_0^T\vert(\mathcal{L}v)(t)\vert^2dt\geq\frac{\delta\gamma^2}{\Vert\mathcal{L}^{-1}\Vert^2}\mathbb{E}\int_0^T\vert v(t)\vert^2dt,
\end{equation*}
for any $v(\cdot)\in \mathcal{U}_{vd}$, thus
\begin{equation*}
	J_0'(v(\cdot))=-J_0''(v(\cdot))\leq-\frac{\delta\gamma^2}{\Vert\mathcal{L}^{-1}\Vert^2}\mathbb{E}\int_0^T\vert v(t)\vert^2dt.
\end{equation*}
From Corollary \ref{cor3.1}, the cost functional $J_0$ is uniformly concave in $v$. The proof is complete.
\end{proof}

\begin{mycor}\label{cor3.2}
If (\ref{K}) admits a solution defined over $[0,T]$, then
\begin{equation*}
	\sup_{v(\cdot)\in\,\mathcal{U}_{vd}}J_0^{\gamma}(\bar{u}(\cdot),v(\cdot))<\infty.
\end{equation*}
\end{mycor}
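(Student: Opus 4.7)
The plan is to chain Proposition \ref{Prop3.3} with the quadratic representation in Proposition \ref{Prop3.1} to produce an explicit uniform upper bound on $J_0(\bar{u}(\cdot),v(\cdot))$ in $v(\cdot)$. First, I would apply Proposition \ref{Prop3.3}: the assumed solvability of the Riccati equation (\ref{K}) gives uniform concavity of the map $v(\cdot)\mapsto J_0(\bar{u}(\cdot),v(\cdot))$, and Corollary \ref{cor3.1}(iii) then supplies a constant $\alpha>0$ with $\langle M_4(v),v\rangle \leq -\alpha\,\mathbb{E}\int_0^T|v(t)|^2\,dt$ for every $v(\cdot)\in\mathcal{U}_{vd}$.

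Next, I would isolate the $v$-dependent part of the representation (\ref{rep}) and rewrite
\[
J_0(\bar{u}(\cdot),v(\cdot)) = \langle M_4(v),v\rangle + 2\langle M_2^*(\bar{u}) + M_5(\bar{\xi}),\,v\rangle + C(\bar{u},\bar{\xi}),
\]
where $C(\bar{u},\bar{\xi}) := \langle M_1(\bar{u}),\bar{u}\rangle + 2\langle M_3(\bar{\xi}),\bar{u}\rangle + M_0(\bar{\xi})$ does not depend on $v$. Proposition \ref{Prop3.1} asserts that $M_2$ and $M_5$ are bounded, so the element $g := M_2^*(\bar{u}) + M_5(\bar{\xi}) \in \mathcal{U}_{vd}$ has finite $L^2$-norm, and $C(\bar{u},\bar{\xi})$ is a finite real number for the fixed data $\bar{u}(\cdot)$ and $\bar{\xi}$.

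Combining the concavity estimate with the Cauchy--Schwarz inequality would give
\[
J_0(\bar{u}(\cdot),v(\cdot)) \leq -\alpha\,\|v\|^2 + 2\|g\|\cdot\|v\| + C(\bar{u},\bar{\xi}),
\]
where $\|\cdot\|$ denotes the $L^2$-norm on $\mathcal{U}_{vd}$. Completing the square on the right-hand side would produce the $v$-independent upper bound $J_0(\bar{u},v)\leq \|g\|^2/\alpha + C(\bar{u},\bar{\xi})$, and taking the supremum over $v(\cdot)\in\mathcal{U}_{vd}$ then delivers the claimed finiteness.

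No significant obstacle is anticipated: the only nontrivial step, namely extracting the coercivity constant $\alpha$ from the mere solvability of (\ref{K}), is already carried out inside the proof of Proposition \ref{Prop3.3} via the bounded-inverse argument for the operator $v\mapsto v-\gamma^{-2}R_2^{-1}E^\top K x_0^3$. The remainder is a routine completion-of-the-square coercivity argument, so the corollary really functions as a direct consequence of the preceding proposition.
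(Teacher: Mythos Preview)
Your argument is correct and is precisely the natural way to fill in the details the paper leaves implicit: Corollary~\ref{cor3.2} is stated without proof as an immediate consequence of Proposition~\ref{Prop3.3} and the representation~(\ref{rep}), and your chaining of the uniform-concavity bound $\langle M_4(v),v\rangle\le -\alpha\|v\|^2$ with a completion-of-the-square on the affine remainder $2\langle M_2^*(\bar{u})+M_5(\bar{\xi}),v\rangle$ is exactly that implication made explicit.
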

In the above, since the cost functional $J_0$ is affected by the disturbance attenuation level $\gamma$, we have added a superscript to it for the convenience of subsequent discussions.

It is clear that the solution to the generalized Riccati equation (\ref{K}) depends on the parameter $\gamma>0$. We shall denote the solution to the equation by $K_\gamma(\cdot)$. (\ref{K}) has a solution in a left neighborhood of the terminal time $T$. However, it may have a finite escape time in $[0,T)$ \cite{Basar08} (its solution tends to infinity).

\begin{mypro}
For $\gamma$ being sufficiently large, the Riccati equation (\ref{K}) admits a solution over $[0,T]$.
\end{mypro}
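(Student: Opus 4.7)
The plan is to exploit the fact that as $\gamma \to \infty$ the parameter $\varepsilon := \gamma^{-2}$ tends to zero, causing (\ref{K}) to degenerate into a linear Lyapunov-type ODE in $\mathbb{S}^n$ that trivially admits a bounded solution on the entire interval $[0,T]$. A standard perturbation/bootstrap argument will then show that for $\varepsilon$ small enough, the fully nonlinear equation (\ref{K}) also admits a solution on $[0,T]$.

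First, I would introduce the limiting equation
\[\dot{K}_\infty + K_\infty A + A^\top K_\infty + C^\top K_\infty C + Q = 0, \qquad K_\infty(T) = G.\]
Under \textbf{(A1)}--\textbf{(A3)} this is a linear matrix ODE with $L^\infty(0,T)$ coefficients, hence admits a unique continuous solution $K_\infty$ on $[0,T]$. Standard comparison for the Lyapunov equation (using $Q \geq 0$, $G \geq 0$) gives $K_\infty \geq 0$, so there exists $M_\infty > 0$ with $\|K_\infty(t)\| \leq M_\infty$ for all $t \in [0,T]$.

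Second, for each $\gamma > 0$ the right-hand side of (\ref{K}) is locally Lipschitz in $K$, so standard ODE theory produces a unique maximal solution $K_\gamma$ on some interval $(\tau_\gamma, T] \subseteq [0,T]$, with either $\tau_\gamma < 0$ or $\|K_\gamma(t)\| \to \infty$ as $t \downarrow \tau_\gamma$. Setting $\Delta_\gamma := K_\gamma - K_\infty$, one finds that $\Delta_\gamma$ satisfies the linear Lyapunov equation
\[\dot{\Delta}_\gamma + \Delta_\gamma A + A^\top \Delta_\gamma + C^\top \Delta_\gamma C + \gamma^{-2} K_\gamma E R_2^{-1} E^\top K_\gamma = 0, \qquad \Delta_\gamma(T) = 0.\]
Since the linear part has $\gamma$-independent bounded coefficients, its solution operator on $[0,T]$ is bounded by some constant $\mathcal{C}$ depending only on the data, and variation of constants yields
\[\|\Delta_\gamma(t)\| \leq \mathcal{C}\, \gamma^{-2} \int_t^T \|K_\gamma(s)\|^2\, ds, \qquad t \in (\tau_\gamma, T].\]

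Finally, I would close the argument via a continuity/bootstrap. Let $\tau^\sharp \in [\tau_\gamma, T]$ be the infimum of times $\tau$ such that $\|K_\gamma(s)\| \leq 2M_\infty$ for all $s \in [\tau, T]$. On $[\tau^\sharp, T]$ the above estimate gives $\|K_\gamma(t)\| \leq M_\infty + 4 \mathcal{C} M_\infty^2 T \gamma^{-2}$; choosing $\gamma$ large enough that $4\mathcal{C} M_\infty^2 T \gamma^{-2} \leq M_\infty/2$ forces $\|K_\gamma(t)\| \leq \tfrac{3}{2} M_\infty < 2 M_\infty$, so the bootstrap bound cannot be saturated. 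This excludes blow-up and forces $\tau_\gamma < 0$, so $K_\gamma$ extends to the whole of $[0,T]$; nonnegativity $K_\gamma \geq 0$ then follows by comparison with the subsolution $\hat K \equiv 0$, noting that $\gamma^{-2} K_\gamma E R_2^{-1} E^\top K_\gamma \geq 0$. The main obstacle I expect is making the bootstrap rigorous: one needs an openness/closedness argument on the maximal existence interval to transfer the a priori bound on $[\tau^\sharp, T]$ down to $(\tau_\gamma, T]$ and thereby preclude finite escape before $t = 0$.
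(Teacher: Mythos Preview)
Your proposal is correct and follows essentially the same idea as the paper: set $\varepsilon=\gamma^{-2}$, observe that at $\varepsilon=0$ the equation is a linear Lyapunov ODE with a bounded global solution, and then use continuity in $\varepsilon$ to carry existence over to small $\varepsilon$. The paper's proof is a three-line version of this: it simply states that the solution to (\ref{K}) is a continuous function of $\varepsilon$ (in particular at $\varepsilon=0$), hence remains bounded for $\varepsilon$ small, and concludes. Your bootstrap via the difference $\Delta_\gamma=K_\gamma-K_\infty$ and the variation-of-constants bound is a more explicit, quantitative realization of that continuity step; it buys a concrete threshold on $\gamma$ (through $M_\infty$, $\mathcal{C}$, $T$) and makes the exclusion of finite escape rigorous without appealing to an abstract parameter-dependence result. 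The nonnegativity remark at the end is extra relative to the proposition as stated, but harmless.
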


\begin{proof}
Replace $\gamma^{-2}$ by $\epsilon$ in (\ref{K}). For $\epsilon=0$, this is a generalized Lyapunov differential equation and thus it admits a unique solution according to Lemma 7.3 of \cite{Yong99}. Moreover, the solution to (\ref{K}) is a continuous function of $\epsilon$, specially at zero. Thus it remains bounded for small enough values of $\epsilon$, and equivalently for large enough values of $\gamma$. The desired conclusion follows.
\end{proof}

From the above result, we define the following set, which is nonempty:
\begin{equation*}
	\hat{\Gamma}:=\left\{\tilde{\gamma}>0\,\big|\,\forall\gamma\geq\tilde{\gamma},\,\mbox{the Riccati equation (\ref{K}) admits a solution over}\,[0,T] \right\}.
\end{equation*}
Define $\hat{\gamma}$ as
\begin{equation}\label{hatgamma}
	\hat{\gamma}:=\inf\{\gamma:\gamma\in\hat{\Gamma}\}.
\end{equation}

\begin{mypro}\label{Prop3.5}
For $\gamma=\hat{\gamma}$, the Riccati equation (\ref{K}) has a finite escape time.
\end{mypro}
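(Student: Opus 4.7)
The plan is to argue by contradiction. Suppose that, for $\gamma=\hat{\gamma}$, the Riccati equation (\ref{K}) admits a symmetric nonnegative-definite solution $K_{\hat{\gamma}}(\cdot)$ on the whole of $[0,T]$; the goal is to produce some $\gamma_{0}\in(0,\hat{\gamma})$ for which (\ref{K}) is still solvable on $[0,T]$, which will place $\gamma_{0}\in\hat{\Gamma}$ and contradict $\hat{\gamma}=\inf\hat{\Gamma}$. Following the reparameterization used in the preceding proposition, set $\epsilon:=\gamma^{-2}$ and $\hat{\epsilon}:=\hat{\gamma}^{-2}$, so that (\ref{K}) becomes
\[
\dot{K}_{\epsilon}+K_{\epsilon}A+A^{\top}K_{\epsilon}+C^{\top}K_{\epsilon}C+Q+\epsilon\,K_{\epsilon}ER_{2}^{-1}E^{\top}K_{\epsilon}=0,\qquad K_{\epsilon}(T)=G,
\]
with affine dependence on the scalar parameter $\epsilon\ge 0$ and locally Lipschitz quadratic nonlinearity in $K$.

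The heart of the argument is a continuous-dependence estimate near $\hat{\epsilon}$. Since $K_{\hat{\epsilon}}$ is continuous on the compact interval $[0,T]$, it is bounded by some $M>0$; fix $M'>M$ and put $\Delta_{\epsilon}:=K_{\epsilon}-K_{\hat{\epsilon}}$. Subtracting the two Riccati equations and using the identity $\epsilon K_{\epsilon}NK_{\epsilon}-\hat{\epsilon}K_{\hat{\epsilon}}NK_{\hat{\epsilon}}=\epsilon\bigl(\Delta_{\epsilon}NK_{\epsilon}+K_{\hat{\epsilon}}N\Delta_{\epsilon}\bigr)+(\epsilon-\hat{\epsilon})K_{\hat{\epsilon}}NK_{\hat{\epsilon}}$ with $N:=ER_{2}^{-1}E^{\top}$, and imposing the a priori bound $|K_{\epsilon}|\le M'$ together with \textbf{(A1)}--\textbf{(A3)}, one obtains an estimate of the form
\[
|\dot{\Delta}_{\epsilon}(t)|\le L|\Delta_{\epsilon}(t)|+C_{0}|\epsilon-\hat{\epsilon}|,\qquad t\in[0,T],\quad \Delta_{\epsilon}(T)=0,
\]
with $L,C_{0}$ depending only on $M'$ and the data. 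A backward Gronwall argument then yields $\sup_{t\in[0,T]}|\Delta_{\epsilon}(t)|\le C_{1}|\epsilon-\hat{\epsilon}|$, and choosing $|\epsilon-\hat{\epsilon}|$ so small that $C_{1}|\epsilon-\hat{\epsilon}|<M'-M$ closes a standard continuation/bootstrap: the bound $|K_{\epsilon}|\le M'$ is self-consistent, so $K_{\epsilon}$ exists on the full interval $[0,T]$ for every $\epsilon$ in some open neighborhood of $\hat{\epsilon}$. In particular one may pick $\epsilon_{0}>\hat{\epsilon}$ in this neighborhood and set $\gamma_{0}:=\epsilon_{0}^{-1/2}<\hat{\gamma}$, at which (\ref{K}) is solvable.

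To conclude, I invoke a standard comparison principle for symmetric Riccati equations (cf.\ Chapter 6 of \cite{Yong99}): if (\ref{K}) is solvable at $\gamma_{0}$, then it is solvable at every $\gamma\ge\gamma_{0}$, because increasing $\gamma$ decreases the positive quadratic coefficient $\gamma^{-2}$ and the maximal solution of the Riccati equation is monotone in that coefficient. Hence $\gamma_{0}\in\hat{\Gamma}$, which contradicts $\hat{\gamma}=\inf\hat{\Gamma}$; the contradiction hypothesis must therefore fail, so at $\gamma=\hat{\gamma}$ the solution of (\ref{K}) cannot exist on all of $[0,T]$, i.e., it has a finite escape time in $[0,T)$. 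The main obstacle in this plan is precisely the Gronwall/bootstrap step in the direction $\epsilon>\hat{\epsilon}$: there the enlarged quadratic coefficient could in principle drive finite-time blow-up, and it is only the a priori bound on $K_{\hat{\epsilon}}$ supplied by the contradiction hypothesis that rules this out and keeps the perturbation local.
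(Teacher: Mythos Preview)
Your argument is correct and follows essentially the same strategy as the paper: both establish that continuous dependence of $K_\gamma$ on $\gamma$ makes the solvability set open, so that $\hat{\gamma}=\inf\hat{\Gamma}$ cannot belong to it. The paper phrases this as ``the set $\{\gamma>0:K_\gamma \text{ exists on }[0,T]\}$ is open and $\hat{\Gamma}$ is its connected component at $\infty$'', while you run the equivalent contradiction with an explicit Gronwall/bootstrap; your version is in fact a little cleaner, since the paper's uniform-continuity step tacitly presupposes existence of $K_{\tilde{\gamma}}$ before bootstrapping it.

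One simplification: the comparison principle you invoke at the end is unnecessary. By the definition of $\hat{\gamma}=\inf\hat{\Gamma}$, for every $\gamma>\hat{\gamma}$ there is some $\tilde{\gamma}\in\hat{\Gamma}$ with $\tilde{\gamma}<\gamma$, and then $\gamma\ge\tilde{\gamma}$ forces solvability at $\gamma$; thus \eqref{K} is already solvable on all of $(\hat{\gamma},\infty)$. Combining this with the neighborhood $(\hat{\gamma}-\delta,\hat{\gamma}]$ produced by your bootstrap gives solvability on $[\gamma_0,\infty)$ directly, hence $\gamma_0\in\hat{\Gamma}$, without appealing to Riccati monotonicity.
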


\begin{proof}
For any $(\gamma,t)$ for which $K_\gamma(t)$ exists, it is a continuous function of $\gamma$, and according to (\ref{K}), $\dot{K}_\gamma(t)$ is also a continuous function of $\gamma$. As a consequence, $K_\gamma$ is continuous in $\gamma$ uniformly in $t$ over $[0,T]$. As a matter of fact, for any $t\in[0,T]$ and $\epsilon>0$, $\exists$ $\delta(t)>0$, such that for any $\tilde{\gamma}\in(\gamma-\delta(t),\gamma+\delta(t))$
\begin{equation*}
	\Vert K_{\tilde{\gamma}}(t)-K_{\gamma}(t)\Vert<\frac{\epsilon}{3}.
\end{equation*}
For fixed $\gamma$, $K_\gamma(t)$ is a continuous function of $t$ over $[0,T]$. Thus, for the above $\epsilon>0$, $\exists$ $\delta_\gamma>0$, such that for any $t'\in(t-\delta_\gamma,t+\delta_\gamma)$, we have
\begin{equation*}
	\Vert K_{\gamma}(t')-K_{\gamma}(t)\Vert<\frac{\epsilon}{3},
\end{equation*}
and the same applies to $K_{\tilde{\gamma}}(t)$. Therefore, from the above two inequalities, we can get that for $\epsilon>0$, $\exists$ $\delta_0>0$, such that for any $t'\in(t-\delta_0,t+\delta_0)$,
\begin{equation*}
\begin{aligned}
	\Vert K_{\tilde{\gamma}}(t')-K_{\gamma}(t')\Vert&=\Vert K_{\tilde{\gamma}}(t')-K_{\tilde{\gamma}}(t)+K_{\tilde{\gamma}}(t)-K_{\gamma}(t)+K_{\gamma}(t)-K_{\gamma}(t')\Vert\\
	&\leq\Vert K_{\tilde{\gamma}}(t')-K_{\tilde{\gamma}}(t)\Vert+\Vert K_{\tilde{\gamma}}(t)-K_{\gamma}(t)\Vert+\Vert K_{\gamma}(t)-K_{\gamma}(t')\Vert<\epsilon.
\end{aligned}
\end{equation*}
This defines an open covering of the compact line segment $[0,T]$. There exits a finite covering. Extract it and pick $\delta>0$ as the minimum of the corresponding $\delta(t)$'s. For any $\tilde{\gamma}\in(\gamma-\delta,\gamma+\delta)$, $\forall$ $t\in[0,T]$, we have
\begin{equation*}
	\Vert K_{\tilde{\gamma}}(t)-K_{\gamma}(t)\Vert<\epsilon.
\end{equation*}
Thus, $K_\gamma(t)$ is continuous in $\gamma$ uniformly in $t$ over $[0,T]$, which provides an a priori bound for $K_{\tilde{\gamma}}(t)$ and thus ensures its existence over $[0,T]$. Then, the set $\{\gamma>0\,\big|\,K_\gamma \,\mbox{is defined over}\, [0,T]\}$ is open, and $\hat{\Gamma}$ is its connected component that contains $\infty$. Hence $\hat{\Gamma}$ is open, and the infimum of the open set $\hat{\Gamma}$ is not contained in the set, i.e., $\hat{\gamma}\notin\hat{\Gamma}$. The desired result then follows.
\end{proof}

\begin{Remark}
By Proposition \ref{Prop3.3} and Proposition \ref{Prop3.5}, we can show that if $\gamma>\hat{\gamma}$, the Riccati equation (\ref{K}) admits a non-negative solution, hence the cost functional $J_0^\gamma$ is uniformly concave in $v$. For all $\gamma\leq\hat{\gamma}$, (\ref{K}) has a finite escape time, and hence $J_0^\gamma$ is not uniformly concave in $v$.
\end{Remark}

Now we present the main result.

\begin{mythm}\label{thm3.1}
The cost functional $J_0^\gamma(\bar{u}(\cdot),v(\cdot))$ has a finite supremum in $v(\cdot)$ for all $\bar{u}(\cdot)$ and $\bar{\xi}$ if and only if $\gamma>\hat{\gamma}$.
\end{mythm}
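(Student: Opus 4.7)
The plan is to split the biconditional into an easy sufficiency and a substantive necessity direction, reducing the necessity to a bounded-real-lemma-type statement by specializing $\bar u$ and $\bar\xi$. For sufficiency: if $\gamma > \hat\gamma$, then by the definition of $\hat\gamma=\inf\hat\Gamma$ together with the openness of $\hat\Gamma$ established inside the proof of Proposition \ref{Prop3.5}, we have $\gamma \in \hat\Gamma$, so the generalized Riccati equation (\ref{K}) admits a solution on $[0,T]$. Corollary \ref{cor3.2} then gives $\sup_{v\in\mathcal U_{vd}} J_0^\gamma(\bar u(\cdot), v(\cdot)) < \infty$ for every $\bar u$ and $\bar\xi$. (Equivalently, Proposition \ref{Prop3.3} yields uniform concavity of $v\mapsto J_0^\gamma$, which a fortiori implies a finite supremum.)

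For necessity, assume $\gamma \leq \hat\gamma$. The strategy is to exhibit a single pair $(\bar u,\bar\xi)$ for which the supremum is $+\infty$; I take $\bar u(\cdot)\equiv 0$ and $\bar\xi = 0$. By Proposition \ref{Prop3.5} (for $\gamma=\hat\gamma$) and by the definition of $\hat\gamma$ (for $\gamma<\hat\gamma$), equation (\ref{K}) has a finite escape time $\tau\in[0,T)$. Under $\bar u\equiv 0$ and $\bar\xi=0$, the processes $y^2,p^2$ vanish identically, so in the representation (\ref{rep}) we have $M_3(\bar\xi)=0$, $M_5(\bar\xi)=0$, $M_0(\bar\xi)=0$, and the functional collapses to $J_0^\gamma(0,v)=\langle M_4(v),v\rangle$. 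The same It\^o identity used in the proof of Proposition \ref{Prop3.3} gives
\begin{equation*}
\langle M_4(v),v\rangle = \mathbb E\int_0^T\left[|x_0^3|_Q^2 - \gamma^2 |v|_{R_2}^2\right]dt + \mathbb E|x_0^3(T)|_G^2,
\end{equation*}
where $x_0^3$ solves $dx_0^3=(Ax_0^3+Ev)dt+Cx_0^3 dW_0$ with $x_0^3(0)=0$.

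It remains to show that this supremum over $v$ is $+\infty$. My preferred route is a Riccati-blowup construction: pick $t_0\in(\tau,T)$ such that $K_\gamma$ is defined on $[t_0,T]$ while $\|K_\gamma(t_0)\|$ is made arbitrarily large by sending $t_0\downarrow\tau$. Over $[0,t_0]$, choose $v$ to steer $x_0^3$ to an arbitrarily prescribed $x^\sharp\in\mathbb R^n$ with $|x^\sharp|=1$, at an $L^2$-cost that is bounded uniformly in $x^\sharp$; over $[t_0,T]$, use the supremum-attaining disturbance associated with the Riccati solution on that subinterval, which contributes $\mathbb E\langle K_\gamma(t_0)x_0^3(t_0),x_0^3(t_0)\rangle$ to the tail. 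Aligning $x^\sharp$ with a large eigendirection of $K_\gamma(t_0)$ makes the tail diverge while the $[0,t_0]$ cost remains bounded, forcing $J_0^\gamma(0,v)\to+\infty$. As a fallback, one can argue by contradiction: $\sup_v \langle M_4(v),v\rangle<\infty$ forces $M_4\leq 0$ (otherwise scaling any $v_0$ with $\langle M_4 v_0,v_0\rangle>0$ by $\lambda$ gives $\lambda^2\langle M_4 v_0,v_0\rangle\to\infty$), and a continuity-in-$\gamma$ argument in the spirit of Proposition \ref{Prop3.5} combined with Proposition \ref{Prop3.3} then contradicts $\gamma\leq\hat\gamma$.

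The main obstacle is the critical case $\gamma=\hat\gamma$. Proposition \ref{Prop3.3} only asserts failure of uniform concavity there, which is strictly weaker than infinite supremum; closing this gap is exactly what the blowup construction above achieves, paralleling the classical Ba\c{s}ar--Bernhard treatment of $H_\infty$ games. I expect the delicate bookkeeping to lie in controlling the $[0,t_0]$ $L^2$-cost uniformly as the target state $x^\sharp$ ranges over the unit sphere and as $t_0$ approaches $\tau$ from above, so that the divergent tail contribution is not offset by the steering cost.
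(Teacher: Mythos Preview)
Your sufficiency argument is fine and matches the paper's.

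The necessity argument has a genuine gap, precisely at the critical value $\gamma=\hat\gamma$ that you flag as the main obstacle. By fixing $\bar u\equiv 0$ and $\bar\xi=0$ you reduce the problem to $J_0^\gamma(0,v)=\langle M_4^\gamma v,v\rangle$, a form that is homogeneous of degree two in $v$. For such a form, $\sup_v\langle M_4^\gamma v,v\rangle$ is either $0$ or $+\infty$, according to whether $M_4^\gamma\le 0$ or not. At $\gamma=\hat\gamma$ the former can and does occur: in the scalar example $A=C=G=0$, $E=Q=R_2=1$, one has $\hat\gamma=2T/\pi$, and taking $v(t)=\cos(\pi t/(2T))$, $x_0^3(t)=(2T/\pi)\sin(\pi t/(2T))$ gives $\langle M_4^{\hat\gamma}v,v\rangle=0$; a limit from $\gamma_k\downarrow\hat\gamma$ shows $M_4^{\hat\gamma}\le 0$. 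Hence with your specialization the supremum equals $0$ at $\gamma=\hat\gamma$, and no steering/blowup construction can help: whatever $v$ you build on $[0,t_0]\cup[t_0,T]$ is still a single element of $\mathcal U_{vd}$ to which the global inequality $\langle M_4^{\hat\gamma}v,v\rangle\le 0$ applies, so the steering cost on $[0,t_0]$ exactly cancels the tail gain $\mathbb E\langle K_{\hat\gamma}(t_0)x_0^3(t_0),x_0^3(t_0)\rangle$. Your fallback has the same defect: $M_4^{\hat\gamma}\le 0$ does not imply $M_4^{\hat\gamma}\ll 0$, so Proposition~\ref{Prop3.3} gives no contradiction.

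The paper avoids this by not fixing $\bar\xi=0$. It works with a sequence $\gamma_k\downarrow\hat\gamma$ (so each $K_{\gamma_k}$ is defined on all of $[0,T]$), locates $t^*$ and a direction $e$ with $\langle K_{\gamma_k}(t^*)e,e\rangle\to\infty$, and then \emph{chooses the initial condition $\xi$} so that the free evolution (with $v=0$ on $[0,t^*)$) delivers $x_0(t^*)=e$; the divergence is then transferred to $\gamma\le\hat\gamma$ via the monotonicity $J_0^\gamma\ge J_0^{\gamma_k}$. The essential difference is that a nonzero initial state breaks the degree-two homogeneity in $v$, which is exactly what is needed at the boundary $\gamma=\hat\gamma$. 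A secondary point: your claim that $K_\gamma$ has finite escape for every $\gamma<\hat\gamma$ does not follow from the definition of $\hat\gamma$ alone; it needs a Riccati comparison (monotonicity in $\gamma^{-2}$), which the paper sidesteps entirely by the $\gamma_k$-sequence device.
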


\begin{proof}
Sufficiency. If $\gamma>\hat{\gamma}$, then $\gamma\in\hat{\Gamma}$. Assume the contrary, $\gamma$$\notin\hat{\Gamma}$, i.e., there exists a $\gamma_0\geq\gamma$ such that the equation (\ref{K}) has no solution over $[0,T]$. Thus, $\hat{\gamma}\geq\gamma_0\geq\gamma$, which contradicts $\gamma>\hat{\gamma}$. Therefore, for any $\gamma>\hat{\gamma}$, the Riccati equation (\ref{K}) admits a solution $K_\gamma(\cdot)$. The sufficiency has already been proven in Corollary \ref{cor3.2}.

Necessity. We prove it by contradiction. Suppose that $\gamma\leq\hat{\gamma}$. Let $\{\gamma_k\}_{k\geq0}$ be a monotonically decreasing sequence in $\mathbb{R}^+$ with limit point $\hat{\gamma}$ (i.e., $\gamma_k\downarrow\hat{\gamma}$), and use $K_k$ to denote $K_{\gamma_k}$. From Proposition \ref{Prop3.5}, there exists some $t^*\in[0,T)$ such that $\Vert K_k(t^*)\Vert\to\infty$ as $k\to\infty$, where $K_k(t^*)$ is nonnegative definite for each $k>0$, so that a valid norm. Hence, we have $Tr(K_k(t^*))\to\infty$. As a consequence, at least one of the diagonal elements is unbounded, since otherwise the trace would be bounded by the (finite) sum of these bounds. Picking $e\in\mathbb{R}^n$ as the basis vector associated with that diagonal element, we have
\begin{equation}\label{inftye}
	\vert e \vert_{K_k(t^*)}^2=\langle K_k(t^*)e,e\rangle\to\infty,\quad k\to\infty.
\end{equation}
Define the zero-extension of $v(\cdot)\in L_{\mathcal{G}^0}^2(t^*,T;\mathbb{R}^{n_v})$ as follows:
\begin{equation*}
	\left[0\mathbf{1}_{[0,t^*)}\oplus v(\cdot)\right](t)=
	\begin{cases}
	0,\quad t\in[0,t^*),\\
	v(\cdot),\quad t\in[t^*,T].
	\end{cases}
\end{equation*}
Clearly, $v(\cdot)\in \mathcal{U}_{vd}$. Let $\Upsilon=\{\Upsilon(t): 0\leq t\leq T\}$ be the solution to the linear matrix {\it stochastic differrential equation} (SDE):
\begin{equation*}
\left\{\begin{aligned}
	d\Upsilon(t)&=A(t)\Upsilon(t)dt+C(t)\Upsilon(t)dW_0(t),\\
	\Upsilon(0)&=I_n,
\end{aligned}\right.
\end{equation*}
and choose the initial value
\begin{equation*}
\begin{aligned}
	\xi&=\Upsilon(t^*)^{-1}e-\int_0^{t^*}\Upsilon(s)^{-1}\left[(B+CD)\bar{u}_0+H\bar{u}_1+Fm\right](s)ds\\
       &\quad-\int_{0}^{t^*}\Upsilon(s)^{-1}\left[ D\bar{u}_0+Om\right](s) dW_0(s),
\end{aligned}\end{equation*}
where $m(\cdot)$ is the solution to the second equation of (\ref{limstate0}) for given $\bar{u}(\cdot)$. Thus, $v(t)=0$ for $t\in[0,t^*)$ will yield $x_0(t^*)=e$.
For any $v(\cdot)\in L_{\mathcal{G}^0}^2(t^*,T;\mathbb{R}^{n_v})$,
\begin{equation}\label{J0kv}
\begin{aligned}
	&J_0^{\gamma_k}(\bar{u}(\cdot),0\mathbf{1}_{[0,t^*)}\oplus v(\cdot))\\
	&=\mathbb{E}\bigg\{\int_{t^*}^T\left[ \vert x_0-\Gamma_1m\vert_Q^2+\vert \bar{u}_0\vert_{R_0}^2+\vert \bar{u}_1\vert_{R_1}^2-\gamma_k^2\vert v\vert_{R_2}^2\right](t)dt+\vert x_0(T)-\Gamma_2m(T)\vert_G^2\\
	&\qquad+\int_0^{t^*}\left[ \vert x_0-\Gamma_1m\vert_Q^2+\vert \bar{u}_0\vert_{R_0}^2+\vert \bar{u}_1\vert_{R_1}^2\right](t)dt\bigg\}\\
	&\geq\mathbb{E}\bigg\{\int_{t^*}^T\left[ \vert x_0-\Gamma_1m\vert_Q^2+\vert \bar{u}_0\vert_{R_0}^2+\vert \bar{u}_1\vert_{R_1}^2-\gamma_k^2\vert v\vert_{R_2}^2\right](t)dt+\vert x_0(T)-\Gamma_2m(T)\vert_G^2 \bigg\}.
\end{aligned}
\end{equation}
Applying It\^{o}'s formula to $\langle K_k(\cdot)x_0(\cdot),x_0(\cdot)\rangle$, we can get
\begin{equation*}
\begin{aligned}
	&\mathbb{E}\left[\langle Gx_0(T),x_0(T)\rangle-\langle K_k(t^*)e,e\rangle\right]\\
	&=\mathbb{E}\bigg\{\int_{t^*}^T\bigg[ \left\langle\left(\dot{K}_k+K_kA+A^\top K_k+C^\top K_kC\right)x_0,x_0\right\rangle
     +2\left\langle H^\top K_kx_0,\bar{u}_1\right\rangle \\
	&\qquad +2\left\langle\big(B^\top K_k+D^\top K_kC\big)x_0+D^\top K_kOm,\bar{u}_0\right\rangle+2\left\langle E^\top K_kx_0,v\right\rangle \\
	&\qquad+2\left\langle \big(F^\top K_k+O^\top K_kC\big)x_0,m\right\rangle +\left\langle D^\top K_kD\bar{u}_0,\bar{u}_0\right\rangle
     +\left\langle O^\top K_kOm,m\right\rangle\bigg] dt\bigg\}.
\end{aligned}
\end{equation*}
Substituting the above equation into the right-hand side of the inequality (\ref{J0kv}), we have
\begin{equation*}
\begin{aligned}
	&J_0^{\gamma_k}(\bar{u}(\cdot),0\mathbf{1}_{[0,t^*)}\oplus v(\cdot))\\
	&\geq\mathbb{E}\left[\langle K_k(t^*)e,e\rangle-2\langle G\Gamma_2m(T),x_0(T)\rangle +\left\langle\Gamma_2^\top G\Gamma_2m(T),m(T)\right\rangle\right]\\
	&\quad +\mathbb{E}\bigg\{\int_{t^*}^T\bigg[ -\gamma_k^2\left\vert v-\gamma_k^{-2}R_2^{-1}E^\top K_kx_0\right\vert_{R_2}^2
     -2\left\langle\big(Q\Gamma_1-K_kF-C^\top K_kO\big)m,x_0\right\rangle \\
	&\qquad\qquad +\left\langle\big(\Gamma_1^\top Q\Gamma_1+O^\top K_kO\big)m,m\right\rangle
     +\left\langle \left(R_0+D^\top K_kD\right)\bar{u}_0,\bar{u}_0\right\rangle +\vert\bar{u}_1\vert_{R_1}^2\\
    &\qquad\qquad +2\left\langle \left(B^\top K_k+D^\top K_kC\right)x_0+D^\top K_kOm,\bar{u}_0\right\rangle +2\left\langle H^\top K_kx_0,\bar{u}_1\right\rangle\bigg] dt\bigg\},
\end{aligned}
\end{equation*}
which, together with (\ref{inftye}), implies
\begin{equation}\label{inftyJ0kv}
	\sup_{v(\cdot)\in\, L_{\mathcal{G}^0}^2(t^*,T;\mathbb{R}^{n_v})}J_0^{\gamma_k}(\bar{u}(\cdot),0\mathbf{1}_{[0,t^*)}\oplus v(\cdot))\to\infty.
\end{equation}

Moreover, for any $v(\cdot)\in L_{\mathcal{G}^0}^2(0,T;\mathbb{R}^{n_v})$, we also have
\begin{equation*}
\begin{aligned}
	J_0^\gamma(\bar{u}(\cdot),v(\cdot))&=\mathbb{E}\bigg\{\int_0^T\left[ \vert x_0-\Gamma_1m\vert_Q^2+\vert \bar{u}_0\vert_{R_0}^2+\vert \bar{u}_1\vert_{R_1}^2-\gamma_k^2\vert v\vert_{R_2}^2\right](t)dt\\
	&\qquad +\vert x_0(T)-\Gamma_2m(T)\vert_G^2 \bigg\}+(\gamma_k^2-\gamma^2)\mathbb{E}\int_0^T\vert v(t)\vert_{R_2}^2dt\geq J_0^{\gamma_k}(\bar{u},v).
\end{aligned}
\end{equation*}
Take the supremum over $v(\cdot)$ on both sides of the above inequality, we can get
\begin{equation*}
	\sup_{v(\cdot)\in\,\mathcal{U}_{vd}}J_0^\gamma(\bar{u}(\cdot),v(\cdot))\geq\sup_{v(\cdot)\in\,\mathcal{U}_{vd}}J_0^{\gamma_k}(\bar{u}(\cdot),v(\cdot)),\quad\forall k\geq0.
\end{equation*}
The above, together with (\ref{inftyJ0kv}), shows that $\sup_{v(\cdot)\in\,\mathcal{U}_{vd}}J_0^\gamma(\bar{u}(\cdot),v(\cdot))=\infty$, which is a contradiction. The proof is complete.
\end{proof}

Now we summarize the relevant results concerning Problem \textbf{(L2)} in the following diagram:
\begin{figure}[htbp]
	\centering
	\includegraphics[width=0.69\linewidth]{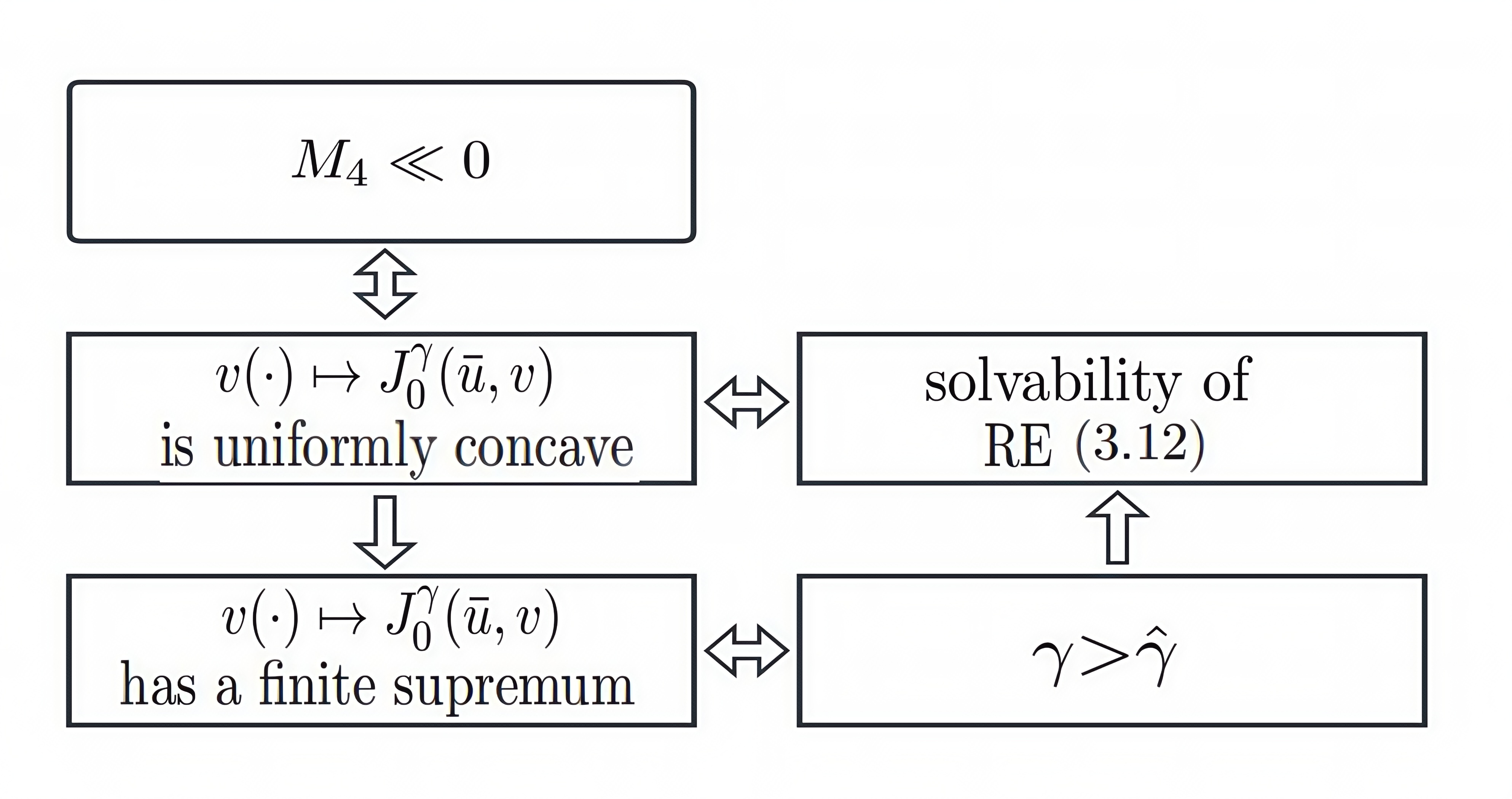}
	\caption{Relationships}
	\label{fig:diagram}
	\label{1}
\end{figure}

Applying It\^{o}'s formula to $\langle x_0^3(\cdot),y^1(\cdot)\rangle, \langle x_0^1(\cdot),y^3(\cdot)\rangle$ and $\langle m^1(\cdot),p^3(\cdot)\rangle$, then taking expectation, $M_2^*$ has the following representation based on $M_2$:
\begin{equation*}
	M_2^*(\bar{u})(\cdot)=E^\top y^1.
\end{equation*}
By Proposition \ref{Prop3.1}, (\ref{staionary1}) takes the following form:
\begin{equation*}
\left\{
\begin{aligned}
	&B^\top (y^1+y^2+y^3)+D^\top (z^1+z^2+z^3)+\tilde{H}^\top (p^1+p^2+p^3)+R_0\bar{u}_0^*=0,\\
	&H^\top (y^1+y^2+y^3)+\tilde{B}^\top (p^1+p^2+p^3)+R_1\bar{u}_1^*=0,\\
	&E^\top(y^1+y^2+y^3)-\gamma^2R_2v^*=0.
\end{aligned}
\right.
\end{equation*}
Define $y^*:=y^1+y^2+y^3$, $z^*:=z^1+z^2+z^3$, $p^*:=p^1+p^2+p^3$ and $q^*:=q^1+q^2+q^3$, we have the following solvability result in terms of {\it forward-backward SDEs} (FBSDEs).

\begin{mythm}
Let \textbf{(A1)}-\textbf{(A3)} hold, and $\gamma>\hat{\gamma}$. Then Problem \textbf{(L2)} is (uniquely) solvable if and only if there (uniquely) exists a 6-tuple $(x_0^*(\cdot),m^*(\cdot),y^*(\cdot),z^*(\cdot),p^*(\cdot),q^*(\cdot))$ and $(\bar{u}^*(\cdot),v^*(\cdot))$ satisfying FBSDEs
\begin{equation}\label{FBSDEs3.1}
\left\{\begin{aligned}
	dx_0^*(t)&=\left[Ax_0^*+B\bar{u}_0^*+Fm^*+H\bar{u}_1^*+Ev^*\right]dt+\left[Cx_0^*+D\bar{u}_0^*+Om^*\right]dW_0,\\
    \dot{m}^*(t)&=(\tilde{A}+\tilde{F})m^*+\tilde{B}\bar{u}_1^*+\tilde{H}\bar{u}_0^*,\\
    dy^*(t)&=-\left[A^\top y^*+C^\top z^*+Qx_0^*-Q\Gamma_1m^*\right]dt+z^*dW_0,\\
    dp^*(t)&=-\left[(\tilde{A}+\tilde{F})^\top p^*+F^\top y^*+O^\top z^*+\Gamma_1^\top Q\Gamma_1 m^*-\Gamma_1^\top Q x_0^*\right]dt+q^*dW_0,\\
    x_0^*(0)&=\xi,\quad m^*(0)=x,\\
    y^*(T)&=Gx_0^*(T)-G\Gamma_2m^*(T),\quad p^*(T)=\Gamma_2^\top G\Gamma_2m^*(T)-\Gamma_2^\top Gx_0^*(T),
\end{aligned}\right.
\end{equation}
such that
\begin{equation}\label{stationary1'}
\left\{
\begin{aligned}
	&B(t)^\top y^*(t)+D^\top(t) z^*(t)+\tilde{H}^\top(t) p^*(t)+R_0(t)\bar{u}_0^*(t)=0,\\
	&H^\top(t) y^*(t)+\tilde{B}^\top(t) p^*(t)+R_1(t)\bar{u}_1^*(t)=0,\\
	&E^\top(t) y^*(t)-\gamma^2R_2(t)v^*(t)=0,\quad t\in[0,T],\quad\mathbb{P}\mbox{-}a.s.,
\end{aligned}
\right.
\end{equation}
and
\begin{equation}\label{J0*}
	V_0(\xi,x)=\langle\xi,y^*(0)\rangle+\langle x,p^*(0)\rangle.
\end{equation}
\end{mythm}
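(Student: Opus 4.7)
The plan is to read the result as a direct corollary of Proposition \ref{Prop3.2}(b) combined with the explicit form of the operators in Proposition \ref{Prop3.1}. First I would verify the uniform convex-concave hypothesis: under \textbf{(A3)}, Remark 3.1 immediately gives $M_1\gg 0$, and $\gamma>\hat{\gamma}$ together with Proposition \ref{Prop3.3}, Theorem \ref{thm3.1} and Corollary \ref{cor3.1} gives $M_4\ll 0$. Hence Proposition \ref{Prop3.2}(c) applies and Problem \textbf{(L2)} is uniquely solvable iff the stationarity system (\ref{staionary1}) has a solution. So the theorem reduces to rewriting (\ref{staionary1}) as an FBSDE with pointwise stationarity.

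Next I would perform the ``superposition'' step. Adding the three FBSDEs of Proposition \ref{Prop3.1} and defining $x_0^*:=x_0^1+x_0^2+x_0^3$, $m^*:=m^1+m^2$ (note $m^3\equiv 0$ since the $v$-system carries no $m$-equation), $y^*:=y^1+y^2+y^3$, $z^*:=z^1+z^2+z^3$, $p^*:=p^1+p^2+p^3$, $q^*:=q^1+q^2+q^3$, linearity together with the initial/terminal conditions of the three systems directly yields the FBSDEs (\ref{FBSDEs3.1}) driven by $(\bar u^*,v^*)$. Substituting the explicit formulas for $M_1,M_2,M_3$ and $M_4,M_5$ into (\ref{staionary1}) and using the already-established identity $M_2^*(\bar u)(\cdot)=E^\top y^1$ produces the three pointwise relations (\ref{stationary1'}). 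This gives the ``if and only if'' part with the aggregated unknowns; uniqueness transfers from the uniqueness of the saddle point $(\bar u^*,v^*)$ through the uniqueness of solutions to each linear FBSDE building block.

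For the value identity (\ref{J0*}) I would start from $V_0(\xi,x)=J_0(\bar u^*,v^*)$ using the quadratic representation (\ref{rep}). Pairing $\bar u^*$ into the first equation of (\ref{staionary1}) gives $\langle M_1(\bar u^*),\bar u^*\rangle=-\langle M_2(v^*),\bar u^*\rangle-\langle M_3(\bar\xi),\bar u^*\rangle$, and pairing $v^*$ into the second gives $\langle M_4(v^*),v^*\rangle=-\langle M_2(v^*),\bar u^*\rangle-\langle M_5(\bar\xi),v^*\rangle$. Substituting both into (\ref{rep}) the $M_2$-cross terms collapse and one is left with
\begin{equation*}
V_0(\xi,x)=\langle M_3(\bar\xi),\bar u^*\rangle+\langle M_5(\bar\xi),v^*\rangle+\langle y^2(0),\xi\rangle+\langle p^2(0),x\rangle.
\end{equation*}
The task is then to identify the first two terms with $\langle \xi,y^1(0)+y^3(0)\rangle+\langle x,p^1(0)+p^3(0)\rangle$. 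This is done by applying It\^o's formula to $\langle x_0^2(\cdot),y^1(\cdot)+y^3(\cdot)\rangle+\langle m^2(\cdot),p^1(\cdot)+p^3(\cdot)\rangle$ on $[0,T]$, taking expectation, and plugging in the drivers of the six equations from Proposition \ref{Prop3.1}; the $Q$, $\Gamma_1$, $G$ and $\Gamma_2$ cross-terms cancel between the forward $x_0^2, m^2$ equations and the backward $y^1+y^3, p^1+p^3$ equations, producing exactly $\langle M_3(\bar\xi),\bar u^*\rangle+\langle M_5(\bar\xi),v^*\rangle$ as the remaining integrand, together with the boundary contribution $\langle\xi,y^1(0)+y^3(0)\rangle+\langle x,p^1(0)+p^3(0)\rangle$. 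Adding $\langle y^2(0),\xi\rangle+\langle p^2(0),x\rangle$ gives (\ref{J0*}).

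The routine parts are the linear superposition of the three FBSDEs and the algebraic rewriting of (\ref{staionary1}). The main obstacle is the bookkeeping in the It\^o-duality step for the value function identity: one must carefully match the six off-diagonal quadratic terms in the drift against the boundary data to see that all non-boundary contributions collapse to $\langle M_3,\bar u^*\rangle+\langle M_5,v^*\rangle$. Once that identity is in hand, (\ref{J0*}) follows immediately.
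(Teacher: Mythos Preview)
Your reduction of the ``if and only if'' part to Proposition~\ref{Prop3.2}(b) together with the explicit operator formulas of Proposition~\ref{Prop3.1}, and your superposition $y^*=y^1+y^2+y^3$, etc., is exactly the paper's route (the paper does this immediately before stating the theorem). That part is fine.

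The gap is in your derivation of the value identity~(\ref{J0*}). When you apply It\^o's formula to $\langle x_0^2,y^1+y^3\rangle+\langle m^2,p^1+p^3\rangle$, the forward processes $x_0^2,m^2$ are driven \emph{only} by $\bar\xi$: neither $\bar u^*$ nor $v^*$ appears in their dynamics. Consequently the drift integrand you obtain contains no control terms at all; it is the state cross-term $-\langle Q(x_0^2-\Gamma_1 m^2),(x_0^1+x_0^3)-\Gamma_1 m^1\rangle$, not $\langle M_3(\bar\xi),\bar u^*\rangle+\langle M_5(\bar\xi),v^*\rangle$ as you claim. The $Q,\Gamma_1,G,\Gamma_2$ terms do \emph{not} cancel. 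Your identity can in fact be rescued, but only by performing a \emph{second} duality computation on $\langle x_0^1+x_0^3,y^2\rangle+\langle m^1,p^2\rangle$ (here the controls do enter, via $x_0^1,x_0^3,m^1$) and matching the two results through the common cross-term; a single It\^o application does not suffice.

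The paper avoids this detour entirely: it applies It\^o's formula directly to $\langle x_0^*,y^*\rangle+\langle m^*,p^*\rangle$ with the aggregated processes. The drift then produces $\langle B^\top y^*+D^\top z^*+\tilde H^\top p^*,\bar u_0^*\rangle+\langle H^\top y^*+\tilde B^\top p^*,\bar u_1^*\rangle+\langle E^\top y^*,v^*\rangle-|x_0^*-\Gamma_1 m^*|_Q^2$, the stationarity conditions~(\ref{stationary1'}) convert the first three terms into $-|\bar u_0^*|_{R_0}^2-|\bar u_1^*|_{R_1}^2+\gamma^2|v^*|_{R_2}^2$, and the terminal term is $|x_0^*(T)-\Gamma_2 m^*(T)|_G^2$. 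Rearranging gives $\langle\xi,y^*(0)\rangle+\langle x,p^*(0)\rangle=J_0(\bar u^*,v^*)=V_0(\xi,x)$ in one step, with no need to pass through the operator representation~(\ref{rep}) or to split into the three component systems.
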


\begin{proof}
Under \textbf{(A1)}-\textbf{(A3)}, by Propositions \ref{Prop3.1} and \ref{Prop3.2}, we obtain the stationarity condition (\ref{stationary1'}).  Applying It\^{o}'s formula to $\langle x_0^*(\cdot),y^*(\cdot)\rangle+\langle m^*(\cdot),p^*(\cdot)\rangle$, integrating both sides on $[0, T]$, taking expectation and substituting it into the cost functional (\ref{limcost0}), we can derive (\ref{J0*}).
\end{proof}

Using (\ref{stationary1'}), $(\bar{u}^*(\cdot),v^*(\cdot))$ can be represented in the open-loop form as follows:
\begin{equation}
\left\{
\begin{aligned}
	\bar{u}_0^*(t)&=-R_0(t)^{-1}\left[B(t)^\top y^*(t)+D^\top(t) z^*(t)+\tilde{H}^\top(t) p^*(t)\right],\\
	\bar{u}_1^*(t)&=-R_1(t)^{-1}\left[H^\top(t) y^*(t)+\tilde{B}^\top(t) p^*(t)\right],\\
	v^*(t)&=\gamma^{-2}R_2(t)^{-1}E^\top(t) y^*(t),\quad t\in[0,T],\quad\mathbb{P}\mbox{-}a.s.
\end{aligned}
\right.
\end{equation}

Inspired by the decoupling technique introduced in \cite{Ma94,Ma99}, we now consider the solvability of FBSDEs (\ref{FBSDEs3.1}), and get the closed-loop representation of open-loop saddle point $(\bar{u}^*(\cdot),v^*(\cdot))$.

For convenience, we have suppressed the superscript $^*$. To solve FBSDEs (\ref{FBSDEs3.1}), we make the ansatz that the adapted solution  $(x_0(\cdot),m(\cdot),y(\cdot),z(\cdot),p(\cdot),q(\cdot))$ to (\ref{FBSDEs3.1}) has the form
\begin{equation}
	y(\cdot)=P_1(\cdot)x_0(\cdot)+\Pi_1(\cdot)m(\cdot),\qquad p(\cdot)=P_2(\cdot)x_0(\cdot)+\Pi_2(\cdot)m(\cdot),
\end{equation}
where $P_1(\cdot)$, $\Pi_1(\cdot)$, $P_2(\cdot)$ and $\Pi_2(\cdot)$: $[0,T]\to\mathbb{R}^{n\times n}$ are differentiable maps to be determined. To match the terminal condition of (\ref{FBSDEs3.1}), we set that
\begin{equation*}
	P_1(T)=G,\qquad\Pi_1(T)=-G\Gamma_2,\qquad P_2(T)=-\Gamma_2^\top G,\qquad\Pi_2(T)=\Gamma_2^\top G\Gamma_2.
\end{equation*}
Applying It\^{o}'s formula to $y(\cdot)=P_1(\cdot)x_0(\cdot)+\Pi_1(\cdot)m(\cdot)$, we have
\begin{equation}\label{decouple y}
\begin{aligned}
	dy&=-\left[A^\top y+C^\top z+Qx_0-Q\Gamma_1m\right]dt+zdW_0\\
	&=\left\{\dot{P}_1x_0+P_1\left[Ax_0+B\bar{u}_0+Fm+H\bar{u}_1+Ev\right]+\dot{\Pi}_1m\right.\\
	&\qquad \left.+\Pi_1\left[(\tilde{A}+\tilde{F})m+\tilde{B}\bar{u}_1+\tilde{H}\bar{u}_0\right]\right\}+P_1[Cx_0+D\bar{u}_0+Om]dW_0.
\end{aligned}
\end{equation}
Hence, one should have
\begin{equation*}
	z(t)=P_1(t)\left[C(t)x_0(t)+D(t)\bar{u}_0(t)+O(t)m(t)\right],\quad a.e.\ t\in[0,T],\quad\mathbb{P}\mbox{-}a.s.
\end{equation*}
The stationarity condition (\ref{stationary1'}) then becomes
\begin{equation*}
\begin{aligned}
	0&=B^\top y+D^\top P_1(Cx_0+D\bar{u}_0+Om)+\tilde{H}^\top p+R_0\bar{u}_0\\
	&=B^\top(P_1x_0+\Pi_1m)+D^\top P_1(Cx_0+D\bar{u}_0+Om)+\tilde{H}^\top(P_2x_0+\Pi_2m)+R_0\bar{u}_0\\
	&=(R_0+D^\top P_1D)\bar{u}_0+\left(B^\top P_1+D^\top P_1C+\tilde{H}^\top P_2\right)x_0+\left(D^\top P_1O+B^\top \Pi_1+\tilde{H}^\top \Pi_2\right)m.
\end{aligned}
\end{equation*}
If the matrix $R_0+D^\top P_1D$ is invertible, then
\begin{equation*}
	\bar{u}_0(\cdot)=-\left(R_0+D^\top P_1D\right)^{-1}\left[\left(B^\top P_1+\tilde{H}^\top P_2+D^\top P_1C\right)x_0+\left(B^\top \Pi_1+\tilde{H}^\top \Pi_2+D^\top P_1O\right)m\right].
\end{equation*}
Therefore, $z(\cdot)$ can be represented as
\begin{equation*}
\begin{aligned}
	z(\cdot)&=\left[P_1C-P_1D\left(R_0+D^\top P_1D\right)^{-1}(B^\top P_1+D^\top P_1C+\tilde{H}^\top P_2)\right]x_0\\
	&\quad+\left[P_1O-P_1D\left(R_0+D^\top P_1D\right)^{-1}\left(B^\top \Pi_1+D^\top P_1O+\tilde{H}^\top \Pi_2\right)\right] m,\quad\mathbb{P}\mbox{-}a.s.
\end{aligned}
\end{equation*}

For $\bar{u}_1(\cdot)$, we have
\begin{equation*}
	H^\top(P_1x_0+\Pi_1m)+\tilde{B}^\top(P_2x_0+\Pi_2m)+R_1\bar{u}_1=0,\quad\mathbb{P}\mbox{-}a.s.
\end{equation*}
thus
\begin{equation*}
	\bar{u}_1(\cdot)=-R_1^{-1}\left[\left(H^\top P_1+\tilde{B}^\top P_2\right)x_0+\left(H^\top \Pi_1+\tilde{B}^\top \Pi_2\right)m\right],\quad\mathbb{P}\mbox{-}a.s.
\end{equation*}
Similarly,
\begin{equation*}
	v(\cdot)=\gamma^{-2}R_2^{-1}E^\top(P_1x_0+\Pi_1m),\quad\mathbb{P}\mbox{-}a.s.
\end{equation*}
Substituting the above equalities into  (\ref{decouple y}) and comparing coefficients in the drift term of (\ref{decouple y}), one gets
\begin{equation}\label{P1}
\left\{\begin{aligned}
	&\dot{P}_1+P_1A+A^\top P_1+C^\top P_1C+Q+\gamma^{-2}P_1ER_2^{-1}E^\top P_1\\
	&\ -\left(P_1B +\Pi_1\tilde{H}+C^\top P_1D\right)\left(R_0+D^\top P_1D\right)^{-1}\left(B^\top P_1+\tilde{H}^\top P_2+D^\top P_1C\right)\\
    &\ -\left(P_1H+\Pi_1\tilde{B}\right)R_1^{-1}\left(H^\top P_1+\tilde{B}^\top P_2\right)=0,\\
	&P_1(T)=G,
\end{aligned}\right.
\end{equation}
\begin{equation}\label{Pi1}
\left\{\begin{aligned}
	&\dot{\Pi}_1+\Pi_1\left(\tilde{A}+\tilde{F}\right)+A^\top\Pi_1+P_1F-Q\Gamma_1+\gamma^{-2}P_1ER_2^{-1}E^\top \Pi_1+C^\top P_1O\\
    &\ -\left(P_1B +\Pi_1\tilde{H}+C^\top P_1D\right)\left(R_0+D^\top P_1D\right)^{-1}\left(B^\top \Pi_1+\tilde{H}^\top \Pi_2+D^\top P_1O\right)\\
    &\ -\left(P_1H+\Pi_1\tilde{B}\right)R_1^{-1}\left(H^\top \Pi_1+\tilde{B}^\top \Pi_2\right)=0,\\
    &\Pi_1(T)=-G\Gamma_2.
\end{aligned}\right.
\end{equation}
In the same way, by applying It\^{o}'s formula to $p(\cdot)=P_2(\cdot)x_0(\cdot)+\Pi_2(\cdot)m(\cdot)$, we can obtain the following equations:
\begin{equation}\label{P2}
\left\{\begin{aligned}
	&\dot{P}_2+P_2A+\left(\tilde{A}+\tilde{F}\right)^\top P_2+F^\top P_1-\Gamma_1^\top Q+\gamma^{-2}P_2ER_2^{-1}E^\top P_1+O^\top P_1C\\
	&\ -\left(P_2B +\Pi_2\tilde{H}+O^\top P_1D\right)\left(R_0+D^\top P_1D\right)^{-1}\left(B^\top P_1+\tilde{H}^\top P_2+D^\top P_1C\right)\\
    &\ -\left(P_2H+\Pi_2\tilde{B}\right)R_1^{-1}\left(H^\top P_1+\tilde{B}^\top P_2\right)=0,\\
	&P_2(T)=-\Gamma_2^\top G,
\end{aligned}\right.
\end{equation}
\begin{equation}\label{Pi2}
\left\{\begin{aligned}
	&\dot{\Pi}_2+\Pi_2\left(\tilde{A}+\tilde{F}\right)+(\tilde{A}+\tilde{F})^\top\Pi_2+P_2F+F^\top\Pi_1+\Gamma_1^\top Q\Gamma_1+\gamma^{-2}P_2ER_2^{-1}E^\top \Pi_1\\
    &\ +O^\top P_1O-\left(P_2B +\Pi_2\tilde{H}+O^\top P_1D\right)\left(R_0+D^\top P_1D\right)^{-1}\left(B^\top \Pi_1+\tilde{H}^\top \Pi_2+D^\top P_1O\right)\\
    &\ -\left(P_2H+\Pi_2\tilde{B}\right)R_1^{-1}\left(H^\top \Pi_1+\tilde{B}^\top \Pi_2\right)=0,\\
    &\Pi_2(T)=\Gamma_2^\top G\Gamma_2.
\end{aligned}\right.
\end{equation}
From the above analysis, if Riccati equations (\ref{P1})-(\ref{Pi2}) admit solutions, then (\ref{FBSDEs3.1}) is decoupled and solvable. The wellposedness of (\ref{FBSDEs3.1}) is obtained and further allows a closed-loop representation of open-loop saddle point $(\bar{u}^*(\cdot),v^*(\cdot))$ given by
\begin{equation}\label{closed-loop rep}
\left\{
\begin{aligned}
	\bar{u}_0^*(x_0^*,m^*)(\cdot)&=-\left(R_0+D^\top P_1D\right)^{-1}\left[\left(B^\top P_1+\tilde{H}^\top P_2+D^\top P_1C\right)x_0^*\right.\\
    &\qquad +\left.\left(B^\top \Pi_1+\tilde{H}^\top \Pi_2+D^\top P_1O\right)m^*\right],\\
	\bar{u}_1^*(x_0^*,m^*)(\cdot)&=-R_1^{-1}\left[\left(H^\top P_1+\tilde{B}^\top P_2\right)x_0^*+\left(H^\top \Pi_1+\tilde{B}^\top \Pi_2\right)m^*\right],\\
	v^*(x_0^*,m^*)(\cdot)&=\gamma^{-2}R_2^{-1}E^\top(P_1x_0^*+\Pi_1m^*),\quad\mathbb{P}\mbox{-}a.s.,
\end{aligned}
\right.
\end{equation}
where $(x_0^*(\cdot),m^*(\cdot))$ is the solution to the following closed-loop stochastic system:
\begin{equation}\label{closed-loop system}
\left\{\begin{aligned}
	dx_0^*(t)&=\left[Ax_0^*+B\bar{u}_0^*(x_0^*,m^*)+Fm^*+H\bar{u}_1^*(x_0^*,m^*)+Ev^*(x_0^*,m^*)\right]dt\\
	&\quad+\left[Cx_0^*+D\bar{u}_0^*(x_0^*,m^*)+Om^*\right]dW_0(t),\\
	\dot{m}^*(t)&=(\tilde{A}+\tilde{F})m^*+\tilde{B}\bar{u}_1^*(x_0^*,m^*)+\tilde{H}\bar{u}_0^*(x_0^*,m^*),\\
	x_0^*(0)&=\xi,\quad m^*(0)=x.
\end{aligned}\right.
\end{equation}
Moreover,
\begin{equation}\label{J0*'}
	V_0(\xi,x)=\langle P_1(0)\xi,\xi\rangle+\langle(\Pi_1(0)+P_2(0)^\top)x,\xi\rangle+\langle\Pi_2(0)x,x\rangle.
\end{equation}

\begin{Remark}
(\romannumeral1) By comparing coefficients, we have $\Pi_1^\top(\cdot)=P_2(\cdot)$.

(\romannumeral2) By Theorem 8.3 of \cite{Sun19}, the closed-loop representation coincides with the outcome of the closed-loop saddle point. Therefore, $(\bar{u}^*(\cdot),v^*(\cdot))$ given by (\ref{closed-loop rep}) is essentially an outcome of a closed-loop saddle point of Problem \textbf{(L2)}.
\end{Remark}

Let us discuss the solvability of (\ref{P1})-(\ref{Pi2}). Set
\begin{equation*}
	P=\begin{pmatrix}
		P_1 & \Pi_1\\
		P_2 & \Pi_2
	\end{pmatrix},
\end{equation*}
which satisfies
\begin{equation}\label{P}
\left\{\begin{aligned}
	&\dot{P}+P\bar{A}+\bar{A}^\top P+\bar{C}^\top P\bar{C}+\bar{Q}+\gamma^{-2}P\bar{E}R_2^{-1}\bar{E}^\top P\\
	&\ -\left(P\bar{B}+\bar{C}^\top P\bar{D}\right)\left(\bar{R}+\bar{D}^\top P\bar{D}\right)^{-1}\left(\bar{B}^\top P+\bar{D}^\top P\bar{C}\right)=0,\\
	&P(T)=\bar{G},
\end{aligned}\right.
\end{equation}
where
\begin{equation*}
\begin{aligned}
	\bar{A}&=\begin{pmatrix}
		A & F\\
		0 & \tilde{A}+\tilde{F}\\
	\end{pmatrix},\quad
	\bar{B}=\begin{pmatrix}
		B & H\\
		\tilde{H} & \tilde{B}\\
	\end{pmatrix},\quad
	\bar{C}=\begin{pmatrix}
		C & O\\
		0 & 0\\
	\end{pmatrix},\quad
	\bar{D}=\begin{pmatrix}
		D & 0\\
		0 & 0\\
	\end{pmatrix},\\
	\bar{Q}&=\begin{pmatrix}
		Q & -Q\Gamma_1\\
		-\Gamma_1^\top Q & \Gamma_1^\top Q\Gamma_1\\
	\end{pmatrix},\quad
	\bar{R}=\begin{pmatrix}
		R_0 & 0\\
		0 & R_1\\
	\end{pmatrix},\quad
	\bar{G}=\begin{pmatrix}
		G & -G\Gamma_2\\
		-\Gamma_2^\top G & \Gamma_2^\top G\Gamma_2\\
	\end{pmatrix},\quad
	\bar{E}=\begin{pmatrix}
		E\\0
	\end{pmatrix}.
\end{aligned}
\end{equation*}
Therefore, the solvability of (\ref{P1})-(\ref{Pi2}) is equivalent to the solvability of the Riccati equation (\ref{P}).

\begin{Remark}
In fact, one can verify, (\ref{P}) coincides with the Riccati equation associated with the existence of a closed-loop saddle point for an augmented LQ (two forward SDEs) zero-sum game.
\end{Remark}

Now we present the main result of this section.

\begin{mythm}
Let \textbf{(A1)}-\textbf{(A3)} hold. For the linear-quadratic zero-sum differential game, let $\hat{\gamma}$ be as defined by (\ref{hatgamma}). Then,

(\romannumeral1) For $\gamma>\hat{\gamma}$, the Riccati equation (\ref{P}) admits a unique
solution on the interval $[0,T]$.

(\romannumeral2) For $\gamma>\hat{\gamma}$, there exists a unique saddle point, given by
\begin{equation}
\left\{
\begin{aligned}
	\bar{u}^*(t)&=-\left(\bar{R}+\bar{D}^\top P\bar{D}\right)^{-1}\left(\bar{B}^\top P+\bar{D}^\top P\bar{C}\right)X^*(t),\\
	v^*(t)&=\gamma^{-2}R_2^{-1}\bar{E}^\top PX^*(t),\quad t\in[0,T],\quad\mathbb{P}\mbox{-}a.s.,
\end{aligned}
\right.
\end{equation}
where $X^*=\begin{pmatrix}
	x_0^*\\m^*
\end{pmatrix}$ is the corresponding state trajectory, generated by
\begin{equation}
\left\{\begin{aligned}
	dX^*(t)&=\left[\bar{A}-\bar{B}\left(\bar{R}+\bar{D}^\top P\bar{D}\right)^{-1}\left(\bar{B}^\top P+\bar{D}^\top P\bar{C}\right)+\gamma^{-2}\bar{E}R_2^{-1}\bar{E}^\top P\right]X^*dt\\
	&\quad+\left[\bar{C}-\bar{D}\left(\bar{R}+\bar{D}^\top P\bar{D}\right)^{-1}\left(\bar{B}^\top P+\bar{D}^\top P\bar{C}\right)\right]X^*dW_0(t),\\
	X^*(0)&=\bar{\xi}.
\end{aligned}\right.
\end{equation}
Moreover, the saddle point value is
\begin{equation}
	V_0(\xi,x)=J_0(\bar{u}^*(\cdot),v^*(\cdot))=\bar{\xi}^\top P(0)\bar{\xi}.
\end{equation}

(\romannumeral3) For $\gamma\leq\hat{\gamma}$, there is some $\xi\in\mathbb{R}^n$ such that the upper value of the game $\inf\limits_{\bar{u}(\cdot)}\sup\limits_{v(\cdot)}J_0^\gamma(\bar{u}(\cdot),v(\cdot))$ is unbounded.
\end{mythm}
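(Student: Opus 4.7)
The plan is to tie everything back to the convex-concave structure developed in Propositions \ref{Prop3.1}--\ref{Prop3.3} and Theorem \ref{thm3.1}, now carried out in the \emph{augmented} system with state $X^{*}=\mathbf{col}[x_0^{*},m^{*}]$. The central observation, already highlighted in the preceding Remark, is that (\ref{P}) is precisely the Riccati equation associated with a standard SLQ zero-sum game for the augmented dynamics with coefficients $(\bar A,\bar B,\bar C,\bar D,\bar E,\bar Q,\bar R,\bar G)$. So Problem \textbf{(L2)} can be recast as this augmented zero-sum problem and the three statements treated in turn.

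For part (i), I would first note that under \textbf{(A3)} the map $\bar{u}(\cdot)\mapsto J_0^{\gamma}(\bar u(\cdot),v(\cdot))$ is uniformly convex (so $M_1\gg 0$), while Proposition \ref{Prop3.3} together with $\gamma>\hat\gamma$ gives $M_4\ll 0$, hence the augmented problem is uniformly convex-concave. Local backward existence of $P$ near $t=T$ is automatic from the terminal value $\bar G$ and the invertibility of $\bar R+\bar D^{\top}P\bar D$ near $T$ (since $\bar R\gg 0$). For global existence on $[0,T]$, I would argue by contradiction: if $P$ had a finite escape at some $t^{*}\in[0,T)$, then one could adapt the construction in the proof of Theorem \ref{thm3.1} (choosing $\bar\xi$ so that $X^{*}(t^{*})$ realizes the blowing-up eigendirection of $P(t^{*})$, and zero-extending a disturbance from $[t^{*},T]$) to exhibit $v(\cdot)\in\mathcal{U}_{vd}$ along which $J_0^{\gamma}(\bar u,v)\to\infty$, contradicting the finite supremum guaranteed by Corollary \ref{cor3.2} and the uniform concavity. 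Uniqueness of $P$ is then standard, since any two solutions satisfy the same linear equation along a common closed-loop trajectory.

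For part (ii), given (i), the decoupling calculation performed in the paragraphs preceding the theorem (the ansatz $y=P_1 x_0+\Pi_1 m$, $p=P_2 x_0+\Pi_2 m$, substitution into the stationarity condition (\ref{stationary1'}), and identification of coefficients) already yields the closed-loop representation (\ref{closed-loop rep}); rewriting it in the augmented variables gives the formulas for $\bar u^{*}$ and $v^{*}$ in the statement. Uniqueness of the saddle point follows from Proposition \ref{Prop3.2}(c). The value function formula $V_0(\xi,x)=\bar\xi^{\top}P(0)\bar\xi$ is obtained by applying It\^o's formula to $\langle P(\cdot)X^{*}(\cdot),X^{*}(\cdot)\rangle$ along the closed-loop trajectory, integrating over $[0,T]$, taking expectation, and using (\ref{P}) together with (\ref{closed-loop rep}) to cancel every quadratic form in $X^{*}$ in the running cost.

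Part (iii) is essentially a corollary of the necessity direction of Theorem \ref{thm3.1}. For $\gamma\leq\hat\gamma$, Proposition \ref{Prop3.5} gives a finite escape time for (\ref{K}), and the explicit construction in the necessity proof of Theorem \ref{thm3.1} produces an initial state $\xi$ and a sequence of disturbances $\{v_k\}$, supported in $[t^{*},T]$, along which $J_0^{\gamma}(\bar u(\cdot),v_k(\cdot))\to\infty$ for every admissible $\bar u$; taking first the supremum in $v$ and then the infimum in $\bar u$ preserves infinity, which gives the claimed unboundedness of the upper value.

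The principal obstacle is the global existence argument in (i): transferring uniform convex-concavity of the cost functional into the non-blow-up of the coupled matrix system (\ref{P1})--(\ref{Pi2}) requires a careful a priori bound on $P(t)$ that is uniform in $t\in[0,T]$, together with a verification that $\bar R+\bar D^{\top}P\bar D$ remains uniformly positive. The cleanest route I would pursue is to identify $P(t)$ with the value function of the tail subgame $\inf_{\bar u}\sup_{v} J_0^{\gamma}$ on $[t,T]$ with initial data given at time $t$; then the uniform convex-concavity, combined with the dynamic programming interpretation, yields a bound $\|P(t)\|\le C$ independent of $t$ as long as $P$ exists, which together with the contradiction argument above rules out any finite escape.
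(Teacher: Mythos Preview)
Your treatment of parts (ii) and (iii) matches the paper exactly: (ii) is obtained by rewriting the closed-loop representation (\ref{closed-loop rep})--(\ref{J0*'}) in the augmented variables, and (iii) is read off from the necessity direction of Theorem \ref{thm3.1}.

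For part (i) you take a more hands-on route than the paper. The paper argues in one line: under \textbf{(A1)}--\textbf{(A3)} and $\gamma>\hat\gamma$, Problem \textbf{(L2)} is uniformly convex-concave, hence admits a closed-loop saddle point, and the existence of a closed-loop saddle point is \emph{equivalent} to the solvability of (\ref{P}) on $[0,T]$ (this equivalence being the standard SLQ zero-sum result, cf.\ \cite{Sun19}). So the paper simply invokes the known equivalence rather than proving non-blow-up directly. Your proposal instead attempts a self-contained argument: local existence near $T$, then a contradiction via a Theorem \ref{thm3.1}-style construction if $P$ escapes at some $t^*$, buttressed by a value-function a priori bound. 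This is a legitimate alternative and has the advantage of being independent of the cited closed-loop theory; the cost is that the blow-up construction for the \emph{augmented} state $X=(x_0,m)$ needs more care than for $K$ alone, since $m$ is deterministic and $v$ enters only the $x_0$-dynamics, so realizing an arbitrary eigendirection of $P(t^*)$ requires you to check that the free choice of both initial data $\xi$ and $x$ (and of $\bar u$ on $[0,t^*]$) suffices. The value-function identification you sketch at the end is the cleaner way to close this, and is in fact what underlies the equivalence the paper cites.
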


\begin{proof}
From the above result, when $\gamma>\hat{\gamma}$, $J_0^\gamma(\bar{u}(\cdot),v(\cdot))$ is uniformly concave in $v(\cdot)$. Under the assumptions \textbf{(A1)}-\textbf{(A3)}, Problem \textbf{(L2)} is uniformly convex-concave. Then Problem \textbf{(L2)} admits a (closed-loop) saddle point on $[0,T]$, which is equivalent to the solvability of the Riccati equation (\ref{P}). Thus, (\romannumeral1) holds. (\romannumeral2) can be obtained by rewriting (\ref{closed-loop rep})-(\ref{J0*'}). For part (\romannumeral3), one can drive it from the proof of Theorem \ref{thm3.1}. The proof is complete.
\end{proof}

\section{The auxiliary limiting  problem for followers}

For a finite number of followers, the leader can accurately obtain the team-optimal strategy, then design differentiated incentive form as follows. For $1\leq i\leq N$,
\begin{equation}
\begin{aligned}
	u_{0i}(t)&\equiv\Gamma_i(t,u_{1i}(\cdot),x_0(\cdot),x_i(\cdot),x^{(N)}(\cdot))=u_{0i}^*(x_0,x_i,x^{(N)})(t)+L(t)\left[u_{1i}-u_{1i}^*(x_0,x_i,x^{(N)})\right](t),
\end{aligned}
\end{equation}
where $(u_{01}^*(\cdot),\cdots,u_{0N}^*(\cdot),u_{11}^*(\cdot),\cdots,u_{1N}^*(\cdot))$ represents the leader's team-optimal solution, and $L(\cdot)$ is (unknown) deterministic matrix-valued function of proper dimension, which is called {\it the leader's incentive matrix}. Since the system is symmetric, the leader's incentive matrix for each follower is the same. If the population size $N$ is sufficiently large, the influence of an individual follower's behavior on the whole system tends to zero, and it is difficult to accurately obtain all the information of each follower. Therefore, the leader cannot derive the optimal strategy of the i-th follower $u_{1i}^*(x_0,x_i,x^{(N)})$ and the optimal control $u_{0i}^*(x_0,x_i,x^{(N)})$ that it should impose on the i-th follower. At this point, the leader tends to focus on the aggregate behavior of the followers (the empirical distribution of the followers' states and controls) and designs a macroscopic incentive strategy (such as benchmark interest rates, taxes, subsidies, etc.) by deriving an overall team-optimal strategy. Therefore, we set the approximation incentive form of the leader for a single representative follower as follows:\vspace{-2mm}
\begin{equation}\label{incentive form}
\begin{aligned}
	u_{0i}(t)&\equiv\Gamma_i(t,u_{1i}(\cdot),x_0(\cdot),m(\cdot))=\bar{u}_0^*(x_0,m)(t)+L(t)\left[u_{1i}(t)-\bar{u}_1^*(x_0,m)(t)\right]\\
	&=-\left(R_0+D^\top P_1D\right)^{-1}\Big[\left(B^\top P_1+\tilde{H}^\top P_2+D^\top P_1C\right)x_0(t)+\left(B^\top \Pi_1+\tilde{H}^\top \Pi_2\right.\\
	&\quad\,+\left.D^\top P_1O\right)m(t)\Big]+L(t)\left[u_{1i}(t) +R_1^{-1}\left(H^\top P_1+\tilde{B}^\top P_2\right)x_0(t)\right.\\
    &\quad +\left.R_1^{-1}\left(H^\top \Pi_1+\tilde{B}^\top \Pi_2\right)m(t)\right]:=L(t)u_{1i}(t)+\zeta(t)x_0(t)+\eta(t)m(t),
\end{aligned}
\end{equation}
where
\begin{equation*}
\begin{aligned}
	\zeta(\cdot)&:=-\left(R_0+D^\top P_1D\right)^{-1}\left(B^\top P_1+\tilde{H}^\top P_2+D^\top P_1C\right)+LR_1^{-1}\left(H^\top P_1+\tilde{B}^\top P_2\right),\\
	\eta(\cdot)&:=-\left(R_0+D^\top P_1D\right)^{-1}\left(B^\top \Pi_1+\tilde{H}^\top \Pi_2+D^\top P_1O\right)+LR_1^{-1}\left(H^\top \Pi_1+\tilde{B}^\top \Pi_2\right).
\end{aligned}
\end{equation*}
Obviously, $\bar{u}_0(t)=L(t)\bar{u}_1(t)+\zeta(t) x_0(t)+\eta(t)m(t)$ and
\begin{equation}\label{incentive rel}
\begin{aligned}
	\bar{u}_0^*(x_0^*,m^*)(t)&=\lim\limits_{N\to\infty}\frac{1}{N}\sum_{i=1}^{N}\Gamma_i\left(t,\bar{u}_1^*(\cdot),x_0^*(\cdot),m^*(\cdot)\right)\\
	&=L(t)\bar{u}_1^*(x_0^*,m^*)(t)+\zeta(t)x_0^*(t)+\eta(t)m^*(t).
\end{aligned}
\end{equation}

\begin{Remark}
If $N$ is sufﬁciently large, the approximate Nash equilibrium strategy of each follower and the leader's approximate incentive strategy for it are identical (see \cite{Mukaidani22,Sanjari25}). Therefore, it is reasonable to set the leader's incentive strategy in the above form for the limiting system.
\end{Remark}

Substituting the leader's incentive strategy $u_{0i}(\cdot)=Lu_{1i}+\zeta x_0+\eta m$ for each follower and $\bar{u}_0(\cdot)=L\bar{u}_1+\zeta x_0+\eta m$ into the auxiliary limiting state equation (\ref{limstatei}) and cost functional (\ref{limcosti}) for the i-th follower $\mathcal{A}_i$, we obtain
\begin{equation}\label{limstatei'}
\left\{\begin{aligned}
	dx_i(t)&=\left[\tilde{A}x_i+\left(\tilde{B}+\tilde{H}L\right)u_{1i}+\left(\tilde{F}+\tilde{H}\eta\right)m+\tilde{H}\zeta x_0\right]dt+\sigma dW_i(t),\\
	\dot{m}(t)&=\left(\tilde{A}+\tilde{F}+\tilde{H}\eta\right)m+\left(\tilde{B}+\tilde{H}L\right)\bar{u}_1+\tilde{H}\zeta x_0,\\
	dx_0(t)&=\left[\left(A+\gamma^{-2}ER_2^{-1}E^\top P_1+B\zeta\right)x_0+\left(F+\gamma^{-2}ER_2^{-1}E^\top \Pi_1+B\eta\right)m\right.\\
	&\quad\,+(H+BL)\bar{u}_1\Big]dt+\left[(C+D\zeta)x_0+(O+D\eta) m+DL\bar{u}_1\right]dW_0(t),\\
	x_i(0)&=x,\quad m(0)=x,\quad x_0(0)=\xi,
\end{aligned}\right.
\end{equation}
and
\begin{equation}\label{limcosti'}
\begin{aligned}
	&J_i'(u_{1i}(\cdot)):=J_i\left(u_{0i}(u_{1i})(\cdot),u_{1i}(\cdot)\right)\\
	&=\mathbb{E}\bigg\{\int_0^T\bigg[\left\langle\tilde{Q}x_i,x_i\right\rangle +\left\langle\left(\eta^\top\tilde{R}_0\eta+\tilde{\Gamma}_1^\top\tilde{Q}\tilde{\Gamma}_1\right)m,m\right\rangle +\left\langle\zeta^\top\tilde{R}_0\zeta x_0,x_0\right\rangle\\
	&\qquad\quad -2\left\langle\tilde{Q}\tilde{\Gamma}_1m,x_i\right\rangle +2\left\langle\eta^\top\tilde{R}_0\zeta x_0,m\right\rangle +\left\langle\left(\tilde{R}_1+L^\top\tilde{R}_0L\right)u_{1i},u_{1i}\right\rangle\\
	&\qquad\quad +2\left\langle L^\top\tilde{R}_0\zeta x_0+L^\top\tilde{R}_0\eta m,u_{1i}\right\rangle\bigg]dt +\left\langle\tilde{G}x_i(T),x_i(T)\right\rangle\\
	&\qquad\quad -2\left\langle\tilde{G}\tilde{\Gamma}_2m(T),x_i(T)\right\rangle +\left\langle\tilde{\Gamma}_2^\top\tilde{G}\tilde{\Gamma}_2m(T),m(T)\right\rangle\bigg\}.
\end{aligned}
\end{equation}
Fixing $\bar{u}_1(\cdot)$, the i-th follower $\mathcal{A}_i$ needs to consider the corresponding auxiliary limiting problem.

Problem \textbf{(F2)'}. For the i-th follower $\mathcal{A}_i$, find a decentralized control $u_{1i}^+(\cdot)\in\mathcal{U}_{id}$ such that
\begin{equation*}
	J_i'(u_{1i}^+(\cdot))=\inf_{u_{1i}(\cdot)\in\,\mathcal{U}_{id}}J_i'(u_{1i}(\cdot)),\quad 1\leq i\leq N,
\end{equation*}
subjects to (\ref{limstatei'}) and (\ref{limcosti'}).

We further give the following assumptions on coefficients:\\
\textbf{(A4)} $\tilde{Q}\geq0$, $\tilde{G}\geq0$, $\tilde{R}_0\gg0$, $\tilde{R}_1\gg0$.

\begin{mythm}\label{thm4.1}
Let \textbf{(A1)}-\textbf{(A4)} hold, and $\gamma>\hat{\gamma}$. For any given $\bar{u}_1^+(\cdot)\in L_{\mathcal{G}^0}^2(0,T;\mathbb{R}^{m_F})$, $(u_{1i}^+(\cdot),x_i^+(\cdot))$ is the decentralized optimal pair
of Problem \textbf{(F2)'} if and only if the adapted solution $(x_i^+(\cdot),\varphi_i^+(\cdot),\psi^+(\cdot),\psi_0^+(\cdot))$ to the following FBSDE:
\begin{equation}\label{FBSDEfollowers}
\left\{\begin{aligned}
	dx_i^+(t)&=\left[\tilde{A}x_i^++\left(\tilde{B}+\tilde{H}L\right)u_{1i}^++\left(\tilde{F}+\tilde{H}\eta\right)m^++\tilde{H}\zeta x_0^+\right]dt+\sigma dW_i(t),\\
	d\varphi_i^+(t)&=-\left[\tilde{A}^\top\varphi_i^++\tilde{Q}x_i^+-\tilde{Q}\tilde{\Gamma}_1m^+\right]dt+\psi^+dW_i+\psi_0^+dW_0,\\
	x_i^+(0)&=x,\quad\varphi_i^+(T)=\tilde{G}x_i^+(T)-\tilde{G}\tilde{\Gamma}_2m^+(T),
\end{aligned}\right.
\end{equation}
satisfies the following stationarity condition:\vspace{-1mm}
\begin{equation*}
	\left(\tilde{R}_1+L^\top\tilde{R}_0L\right)u_{1i}^++\left(\tilde{B}+\tilde{H}L\right)^\top\varphi^+_i+L^\top\tilde{R}_0\zeta x_0^++L^\top\tilde{R}_0\eta m^+=0,\quad a.e.\;t\in[0,T],\quad \mathbb{P}\mbox{-}a.s.,
\end{equation*}
where $(m^+(\cdot),x_0^+(\cdot))$ is the solution to the following coupled SDEs for fixing $\bar{u}_1^+(\cdot)$:\vspace{-1mm}
\begin{equation*}
\left\{\begin{aligned}
	\dot{m}^+(t)&=\left(\tilde{A}+\tilde{F}+\tilde{H}\eta\right)m^++\left(\tilde{B}+\tilde{H}L\right)\bar{u}_1^++\tilde{H}\zeta x_0^+,\\
    dx_0^+(t)&=\left[\left(A+\gamma^{-2}ER_2^{-1}E^\top P_1+B\zeta\right)x_0^++\left(F+\gamma^{-2}ER_2^{-1}E^\top \Pi_1+B\eta\right)m^+\right.\\
    &\quad\,+(H+BL)\bar{u}_1^+\Big]dt+\left[(C+D\zeta)x_0^++(O+D\eta) m^++DL\bar{u}_1^+\right]dW_0(t),\\
    m^+(0)&=x,\quad x_0^+(0)=\xi.
\end{aligned}\right.
\end{equation*}
\end{mythm}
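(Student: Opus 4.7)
The plan is to treat this as a stochastic LQ control problem for the $i$-th follower after plugging in the incentive form, with $\bar{u}_1^+(\cdot)$ held fixed as a known element of $L_{\mathcal{G}^0}^2$. Because $m^+(\cdot)$ and $x_0^+(\cdot)$ depend on $\bar{u}_1^+$ but are independent of $u_{1i}$, once $\bar{u}_1^+$ is frozen these two processes are exogenous to the follower's problem; only the follower's own trajectory $x_i^+$ responds to the variation of $u_{1i}$. Under assumption \textbf{(A4)} the running weight on $u_{1i}$ is $\tilde{R}_1+L^\top\tilde{R}_0L \gg 0$, and the remaining quadratic terms in $x_i$ carry $\tilde{Q}\geq 0$, $\tilde{G}\geq 0$, so $J_i'$ is uniformly convex in $u_{1i}$ and an optimizer, if it exists, is unique; necessity and sufficiency of a first-order condition then coincide.

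First, I would perform a convex-variational (Gâteaux) analysis. For an admissible perturbation $\delta u_{1i}(\cdot)\in\mathcal{U}_{id}$ and $\varepsilon\in\mathbb{R}$, set $u_{1i}^\varepsilon:=u_{1i}^++\varepsilon\delta u_{1i}$ and denote by $x_i^\varepsilon$ the corresponding state. Linearity of the state equation (with $m^+,x_0^+$ frozen) gives $x_i^\varepsilon = x_i^+ + \varepsilon\delta x_i$, where $\delta x_i$ solves
\begin{equation*}
\left\{\begin{aligned}
d(\delta x_i)(t) &= \bigl[\tilde{A}\,\delta x_i + (\tilde{B}+\tilde{H}L)\,\delta u_{1i}\bigr]dt,\\
\delta x_i(0) &= 0.
\end{aligned}\right.
\end{equation*}
Expanding $J_i'(u_{1i}^\varepsilon)$ in $\varepsilon$, the quadratic-in-$\varepsilon$ piece is nonnegative by uniform convexity, so $u_{1i}^+$ is optimal iff the linear-in-$\varepsilon$ part vanishes for every $\delta u_{1i}$, namely
\begin{equation*}
\mathbb{E}\!\int_0^T\!\bigl\langle \tilde{Q}x_i^+ - \tilde{Q}\tilde{\Gamma}_1 m^+,\,\delta x_i\bigr\rangle dt + \mathbb{E}\bigl\langle \tilde{G}x_i^+(T)-\tilde{G}\tilde\Gamma_2 m^+(T),\,\delta x_i(T)\bigr\rangle + \mathbb{E}\!\int_0^T\!\bigl\langle(\tilde{R}_1+L^\top\tilde R_0 L)u_{1i}^+ + L^\top\tilde R_0\zeta x_0^+ + L^\top\tilde R_0\eta m^+,\,\delta u_{1i}\bigr\rangle dt = 0.
\end{equation*}

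Next, I introduce the adjoint BSDE driven by both $W_i$ and $W_0$ (since the admissible filtration is $\mathcal{G}_t^i$, whose Brownian basis includes $W_0$ by the common-noise mechanism), which by the martingale representation theorem has two diffusion coefficients $\psi^+$ and $\psi_0^+$:
\begin{equation*}
\left\{\begin{aligned}
d\varphi_i^+(t)&=-\bigl[\tilde{A}^\top\varphi_i^++\tilde{Q}x_i^+-\tilde{Q}\tilde{\Gamma}_1m^+\bigr]dt+\psi^+ dW_i+\psi_0^+ dW_0,\\
\varphi_i^+(T)&=\tilde{G}x_i^+(T)-\tilde{G}\tilde{\Gamma}_2 m^+(T).
\end{aligned}\right.
\end{equation*}
Applying Itô's formula to $\langle\varphi_i^+,\delta x_i\rangle$ and taking expectation, the $\tilde Q$- and $\tilde G$-terms in the variation are absorbed into $\mathbb{E}\!\int_0^T\!\bigl\langle (\tilde{B}+\tilde{H}L)^\top\varphi_i^+,\delta u_{1i}\bigr\rangle dt$; here the $\sigma dW_i$ noise of $x_i^+$ contributes no expectation because $\delta x_i$ is itself adapted and has no diffusion. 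Combining this with the first-order identity above, the optimality condition reduces to
\begin{equation*}
\mathbb{E}\!\int_0^T\!\Bigl\langle(\tilde{R}_1+L^\top\tilde R_0L)u_{1i}^+ + (\tilde{B}+\tilde{H}L)^\top\varphi_i^+ + L^\top\tilde R_0\zeta x_0^+ + L^\top\tilde R_0\eta m^+,\,\delta u_{1i}\Bigr\rangle dt = 0
\end{equation*}
for all $\delta u_{1i}\in\mathcal{U}_{id}$. Since the integrand is itself $\mathcal{G}_t^i$-adapted (note that $x_0^+$ and $m^+$ are $\mathcal{G}_t^0\subset\mathcal{G}_t^i$-adapted, and $\varphi_i^+$ is $\mathcal{G}_t^i$-adapted by construction), arbitrariness of $\delta u_{1i}$ yields the pointwise stationarity condition in the statement. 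Conversely, by uniform convexity the stationarity condition forces $u_{1i}^+$ to be the global minimizer, so the equivalence follows.

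The only technical point that needs care is the admissibility/adaptedness bookkeeping: the adjoint must be driven by two Brownian motions to remain $\mathcal{G}_t^i$-adapted, the exogenous processes $m^+,x_0^+$ have to be verified $\mathcal{G}_t^i$-measurable for the stationarity relation to make sense as an $\mathcal{G}_t^i$-adapted pointwise identity, and the Itô integration-by-parts must be justified by the standard square-integrability estimates coming from the $L^\infty$ coefficients in \textbf{(A1)}-\textbf{(A2)}. These are routine, and I expect the main substantive step to be simply the correct identification of the adjoint diffusion pair $(\psi^+,\psi_0^+)$ dictated by the enlarged filtration $\mathcal{G}_t^i$.
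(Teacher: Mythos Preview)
Your proposal is correct and follows essentially the same approach as the paper: a convex-variational argument with the perturbation $u_{1i}^++\lambda\,\delta u_{1i}$, expansion of $J_i'$ into a nonnegative quadratic part (guaranteed by \textbf{(A4)}) plus a linear part, introduction of the adjoint BSDE with the two diffusion coefficients $(\psi^+,\psi_0^+)$ for the filtration $\mathcal{G}_t^i$, and application of It\^o's formula to $\langle\varphi_i^+,\delta x_i\rangle$ to reduce the first-order condition to the stated stationarity relation. Your additional remarks on adaptedness of $m^+,x_0^+$ and the need for both $W_i$ and $W_0$ martingale parts are accurate and make explicit what the paper leaves implicit.
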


\begin{proof}
Let $u_{1i}^+(\cdot)\in\mathcal{U}_{id}$ and $(x_i^+(\cdot),\varphi_i^+(\cdot),\psi^+(\cdot),\psi_0^+(\cdot))$ be the adapted solution to FBSDE (\ref{FBSDEfollowers}). For any $u_{1i}(\cdot)\in\mathcal{U}_{id}$ and $\lambda\in\mathbb{R}$, let $x_i^\lambda(\cdot)$ be the solution to the following perturbed state equation:
\begin{equation*}
\left\{\begin{aligned}
	dx_i^\lambda(t)&=\left[\tilde{A}x_i^\lambda+\left(\tilde{B}+\tilde{H}L\right)(u_{1i}^++\lambda u_{1i})+\left(\tilde{F}+\tilde{H}\eta\right)m^++\tilde{H}\zeta x_0^+\right]dt+\sigma dW_i(t),\\
	x_i^\lambda(0)&=x.
\end{aligned}\right.
\end{equation*}
Then denoting $\delta x_i(\cdot)$ the solution to the following equation:
\begin{equation*}
\left\{\begin{aligned}
	d\delta x_i(t)&=\left[\tilde{A}\delta x_i+\left(\tilde{B}+\tilde{H}L\right)u_{1i}\right]dt,\\
	\delta x_i(0)&=0,
\end{aligned}\right.
\end{equation*}
we have $x_i^\lambda(\cdot)=x_i^+(\cdot)+\lambda\delta x_i(\cdot)$ and
\begin{equation*}
\begin{aligned}
	&J_i'(u_{1i}^+(\cdot)+\lambda u_{1i}(\cdot))-J_i'(u_{1i}^+(\cdot))\\
&=\lambda^2\mathbb{E}\bigg\{\int_0^T\left[\left\langle\tilde{Q}\delta x_i,\delta x_i\right\rangle +\left\langle\left(\tilde{R}_1+L^\top\tilde{R}_0L\right)u_{1i},u_{1i}\right\rangle\right]dt +\left\langle\tilde{G}\delta x_i(T),\delta x_i(T)\right\rangle\bigg\}\\
	&\quad +2\lambda\mathbb{E}\bigg\{\int_0^T\left[\left\langle\left(\tilde{R}_1+L^\top\tilde{R}_0L\right)u_{1i}^++L^\top\tilde{R}_0\zeta x_0^++L^\top\tilde{R}_0\eta m^+,u_{1i}\right\rangle\right.\\
	&\qquad\qquad\qquad +\left\langle\tilde{Q}x_i^+-\tilde{Q}\tilde{\Gamma}_1m^+,\delta x_i\right\rangle\Big]dt +\left\langle\tilde{G}x_i^+(T)-\tilde{G}\tilde{\Gamma}_2m^+(T),\delta x_i(T)\right\rangle\bigg\}.
\end{aligned}
\end{equation*}
Applying It\^{o}'s formula to $\langle\varphi_i^+(\cdot),\delta x_i(\cdot)\rangle$, we obtain
\begin{equation*}
	\mathbb{E}\left\langle\varphi_i^+(T),\delta x_i(T)\right\rangle=\mathbb{E}\int_0^T\left[-\left\langle\tilde{Q}x_i^+-\tilde{Q}\tilde{\Gamma}_1m^+,\delta x_i\right\rangle+\left\langle\left(\tilde{B}+\tilde{H}L\right)^\top\varphi^+_i,u_{1i}\right\rangle\right]dt.
\end{equation*}
Hence,
\begin{equation*}
\begin{aligned}
	&J_i'(u_{1i}^+(\cdot)+\lambda u_{1i}(\cdot))-J_i'(u_{1i}^+(\cdot))\\
	&=\lambda^2\mathbb{E}\bigg\{\int_0^T\left[\left\langle\tilde{Q}\delta x_i,\delta x_i\right\rangle+\left\langle\left(\tilde{R}_1+L^\top\tilde{R}_0L\right)u_{1i},u_{1i}\right\rangle\right]dt+\left\langle\tilde{G}\delta x_i(T),\delta x_i(T)\right\rangle\bigg\}\\
    &\quad+2\lambda\mathbb{E}\int_0^T\left[\left\langle\left(\tilde{R}_1+L^\top\tilde{R}_0L\right)u_{1i}^++\left(\tilde{B}+\tilde{H}L\right)^\top\varphi^+_i+L^\top\tilde{R}_0\zeta x_0^++L^\top\tilde{R}_0\eta m^+,u_{1i}\right\rangle\right]dt.
\end{aligned}
\end{equation*}
It follows that
\begin{equation*}
	J_i'(u_{1i}^+(\cdot))\leq J_i'(u_{1i}^+(\cdot)+\lambda u_{1i}(\cdot)),\quad\forall u_{1i}(\cdot)\in\mathcal{U}_{id},\;\lambda\in\mathbb{R},\\
\end{equation*}
if and only if the following convexity condition holds:
\begin{equation*}
	\mathbb{E}\bigg\{\int_0^T\left[\left\langle\tilde{Q}\delta x_i,\delta x_i\right\rangle+\left\langle\left(\tilde{R}_1+L^\top\tilde{R}_0L\right)u_{1i},u_{1i}\right\rangle\right]dt+\left\langle\tilde{G}\delta x_i(T),\delta x_i(T)\right\rangle\bigg\}\geq0,\quad\forall u_{1i}(\cdot)\in\mathcal{U}_{id},
\end{equation*}
and
\begin{equation*}
	\left(\tilde{R}_1+L^\top\tilde{R}_0L\right)u_{1i}^++\left(\tilde{B}+\tilde{H}L\right)^\top\varphi^+_i+L^\top\tilde{R}_0\zeta x_0^++L^\top\tilde{R}_0\eta m^+=0,\quad a.e.\;t\in[0,T],\quad \mathbb{P}\mbox{-}a.s.
\end{equation*}
Obviously, \textbf{(A4)} can ensure that the above convexity condition holds. The proof is completed.
\end{proof}

From Theorem \ref{thm4.1}, the decentralized strategy of Problem \textbf{(F2)'} is given by
\begin{equation}\label{ui1+}
\begin{aligned}
	&u_{1i}^+(t)=-\left(\tilde{R}_1+L^\top\tilde{R}_0L\right)^{-1}\left[\left(\tilde{B}+\tilde{H}L\right)^\top\varphi^+_i+L^\top\tilde{R}_0\zeta x_0^++L^\top\tilde{R}_0\eta m^+\right],\\
	&\hspace{6cm} a.e.\;t\in[0,T],\quad \mathbb{P}\mbox{-}a.s.,
\end{aligned}
\end{equation}
and the corresponding stochastic Hamiltonian system is obtained:
\begin{equation}\label{Hamiltioni}
\left\{\begin{aligned}
	dx_i^+(t)&=\bigg\{\tilde{A}x_i^+-\left(\tilde{B}+\tilde{H}L\right)\left(\tilde{R}_1+L^\top\tilde{R}_0L\right)^{-1}\left(\tilde{B}+\tilde{H}L\right)^\top\varphi_i^+\\
	&\quad+\left[\tilde{F}+\tilde{H}\eta-\left(\tilde{B}+\tilde{H}L\right)\left(\tilde{R}_1+L^\top\tilde{R}_0L\right)^{-1}L^\top\tilde{R}_0\eta\right]m^+\\
	&\quad+\left[\tilde{H}\zeta-\left(\tilde{B}+\tilde{H}L\right)\left(\tilde{R}_1+L^\top\tilde{R}_0L\right)^{-1}L^\top\tilde{R}_0\zeta \right]x_0^+\bigg\}dt+\sigma dW_i(t),\\
    d\varphi_i^+(t)&=-\left[\tilde{A}^\top\varphi_i^++\tilde{Q}x_i^+-\tilde{Q}\tilde{\Gamma}_1m^+\right]dt+\psi^+dW_i+\psi_0^+dW_0,\\
    \dot{m}^+(t)&=\left(\tilde{A}+\tilde{F}+\tilde{H}\eta\right)m^++\left(\tilde{B}+\tilde{H}L\right)\bar{u}_1^++\tilde{H}\zeta x_0^+,\\
    dx_0^+(t)&=\left[\left(A+\gamma^{-2}ER_2^{-1}E^\top P_1+B\zeta\right)x_0^++\left(F+\gamma^{-2}ER_2^{-1}E^\top \Pi_1+B\eta\right)m^+\right.\\
    &\quad\,+(H+BL)\bar{u}_1^+\Big]dt+\left[(C+D\zeta)x_0^++(O+D\eta) m^++DL\bar{u}_1^+\right]dW_0(t),\\
    x_i^+(0)&=x,\quad\varphi_i^+(T)=\tilde{G}x_i^+(T)-\tilde{G}\tilde{\Gamma}_2m^+(T),\quad m^+(0)=x,\quad x_0^+(0)=\xi.\\
\end{aligned}\right.
\end{equation}

Sum the first $N$ terms, take the average, and let $N$ approach infinity, we have
\begin{equation}\label{u1+}
\begin{aligned}
	\bar{u}_1^+(t)&:=\lim\limits_{N\to\infty}\frac{1}{N}\sum_{i=1}^{N}u_{1i}^+(t)\\
	&=-\left(\tilde{R}_1+L^\top\tilde{R}_0L\right)^{-1}\left[\left(\tilde{B}+\tilde{H}L\right)^\top\lim\limits_{N\to\infty}\frac{1}{N}\sum_{i=1}^{N}\varphi^+_i+L^\top\tilde{R}_0\zeta x_0^++L^\top\tilde{R}_0\eta m^+\right],\\
 &\hspace{6cm} a.e.\;t\in[0,T],\quad \mathbb{P}\mbox{-}a.s.
\end{aligned}
\end{equation}
Set $\varphi^+(\cdot):=\lim\limits_{N\to\infty}\frac{1}{N}\sum_{i=1}^{N}\varphi^+_i(\cdot)$, by the law of large numbers, $\varphi^+(\cdot)$ satisfies the following BSDE:
\begin{equation}
\left\{\begin{aligned}
	d\varphi^+(t)&=-\left[\tilde{A}^\top\varphi^++\left(\tilde{Q}-\tilde{Q}\tilde{\Gamma}_1\right)m^+\right]dt+\varphi_0^+dW_0(t),\\
	\varphi^+(T)&=\left(\tilde{G}-\tilde{G}\tilde{\Gamma}_2\right)m^+(T).
\end{aligned}\right.
\end{equation}
Substituting (\ref{u1+}) into the equations satisfied by $(m^+(\cdot),x_0^+(\cdot))$ in the Hamiltonian system (\ref{Hamiltioni}), we deduce the following CC system:
\begin{equation}\label{cc}
\left\{\begin{aligned}
	\dot{m}^+(t)&=\hat{A}_1m^++\hat{B}_1x_0^++\hat{H}_1\varphi^+,\\
	dx_0^+(t)&=\left[\hat{A}_2x_0^++\hat{B}_2m^++\hat{H}_2\varphi^+\right]dt+\left[\hat{A}_3x_0^++\hat{B}_3m^++\hat{H}_3\varphi^+\right]dW_0(t),\\
	d\varphi^+(t)&=-\left[\tilde{A}^\top\varphi^++\left(\tilde{Q}-\tilde{Q}\tilde{\Gamma}_1\right)m^+\right]dt+\varphi_0^+dW_0(t),\\
	 x_0^+(0)&=\xi,\quad x_0^+(0)=\xi,\quad \varphi^+(T)=\left(\tilde{G}-\tilde{G}\tilde{\Gamma}_2\right)m^+(T),\\
\end{aligned}\right.
\end{equation}
where we have denoted
\begin{equation}\label{notations}
\begin{aligned}
	\hat{A}_1(\cdot)&:=\tilde{A}+\tilde{F}+\tilde{H}\eta-\left(\tilde{B}+\tilde{H}L\right)\left(\tilde{R}_1+L^\top\tilde{R}_0L\right)^{-1}L^\top\tilde{R}_0\eta,\\
	\hat{B}_1(\cdot)&:=\tilde{H}\zeta-\left(\tilde{B}+\tilde{H}L\right)\left(\tilde{R}_1+L^\top\tilde{R}_0L\right)^{-1}L^\top\tilde{R}_0\zeta,\\
	\hat{H}_1(\cdot)&:=-\left(\tilde{B}+\tilde{H}L\right)\left(\tilde{R}_1+L^\top\tilde{R}_0L\right)^{-1}(\tilde{B}+\tilde{H}L)^\top,\\
	\hat{A}_2(\cdot)&:=A+\gamma^{-2}ER_2^{-1}E^\top P_1+B\zeta-(H+BL)\left(\tilde{R}_1+L^\top\tilde{R}_0L\right)^{-1}L^\top\tilde{R}_0\zeta,\\
	\hat{B}_2(\cdot)&:=F+\gamma^{-2}ER_2^{-1}E^\top \Pi_1+B\eta-(H+BL)\left(\tilde{R}_1+L^\top\tilde{R}_0L\right)^{-1}L^\top\tilde{R}_0\eta,\\
	\hat{H}_2(\cdot)&:=-(H+BL)\left(\tilde{R}_1+L^\top\tilde{R}_0L\right)^{-1}\left(\tilde{B}+\tilde{H}L\right)^\top,\\
	\hat{A}_3(\cdot)&:=C+D\zeta-DL\left(\tilde{R}_1+L^\top\tilde{R}_0L\right)^{-1}L^\top\tilde{R}_0\zeta,\\
	\hat{B}_3(\cdot)&:=O+D\eta-DL\left(\tilde{R}_1+L^\top\tilde{R}_0L\right)^{-1}L^\top\tilde{R}_0\eta,\\
	\hat{H}_3(\cdot)&:=-DL\left(\tilde{R}_1+L^\top\tilde{R}_0L\right)^{-1}\left(\tilde{B}+\tilde{H}L\right)^\top.
\end{aligned}
\end{equation}

\begin{mythm}\label{feedback}
Let \textbf{(A1)}-\textbf{(A4)} hold, and $\gamma>\hat{\gamma}$. If $u_{1i}^+(\cdot)$ is the decentralized optimal control of Problem \textbf{(F2)'}, then $\bar{u}_1^+(\cdot)=\lim\limits_{N\to\infty}\frac{1}{N}\sum_{i=1}^{N}u_{1i}^+(\cdot)$ can be expressed as
\begin{equation}\label{u1+feedback}
\begin{aligned}
	\bar{u}_1^+(\cdot)&=-\left(\tilde{R}_1+L^\top\tilde{R}_0L\right)^{-1}\left\{\left[L^\top\tilde{R}_0\zeta+\left(\tilde{B}+\tilde{H}L\right)^\top\Theta\right]x_0^+\right.\\
    &\qquad \left.+\left[L^\top\tilde{R}_0\eta+\left(\tilde{B}+\tilde{H}L\right)^\top\Delta\right] m^+\right\},\quad a.e.\;t\in[0,T],\quad \mathbb{P}\mbox{-}a.s.,
\end{aligned}
\end{equation}
and
\begin{equation}\label{ui1+feedback}
\begin{aligned}
	u_{1i}^+(\cdot)&=-\left(\tilde{R}_1+L^\top\tilde{R}_0L\right)^{-1}\left\{\left(\tilde{B}+\tilde{H}L\right)^\top\Sigma x_i^++\left[L^\top\tilde{R}_0\zeta+\left(\tilde{B}+\tilde{H}L\right)^\top\Psi\right] x_0^+\right.\\
    &\qquad \left.+\left[L^\top\tilde{R}_0\eta+\left(\tilde{B}+\tilde{H}L\right)^\top\Phi\right] m^+\right\},\quad a.e.\;t\in[0,T],\quad \mathbb{P}\mbox{-}a.s.
\end{aligned}
\end{equation}
Here, $\Delta(\cdot)$ and $\Theta(\cdot)$ solve the following Riccati equations:
\begin{equation}\label{Delta-Theta}
\left\{\begin{aligned}
	&\dot{\Delta}+\Delta\hat{A}_1+\tilde{A}^\top\Delta+\Delta\hat{H}_1\Delta+\Theta\left(\hat{B}_2+\hat{H}_2\Delta\right)+\tilde{Q}-\tilde{Q}\tilde{\Gamma}_1=0,\\
	&\dot{\Theta}+\Theta\hat{A}_2+\tilde{A}^\top\Theta+\Theta\hat{H}_2\Theta+\Delta\left(\hat{B}_1+\hat{H}_1\Theta\right)=0,\\
	&\Delta(T)=\tilde{G}-\tilde{G}\tilde{\Gamma}_2,\quad\Theta(T)=0,
\end{aligned}\right.
\end{equation}
and $\Sigma(\cdot)$, $\Phi(\cdot)$ and $\Psi(\cdot)$ satisfy:
\begin{equation}\label{decoupleHamiltoni}
\left\{\begin{aligned}
	&\dot{\Sigma}+\Sigma\tilde{A}+\tilde{A}^\top\Sigma+\Sigma\hat{H}_1\Sigma+\tilde{Q}=0,\\
	&\dot{\Phi}+\Phi\left(\hat{A}_1+\hat{H}_1\Delta\right)+\tilde{A}^\top\Phi+\Sigma\hat{H}_1\Phi+\Sigma\left(\hat{A}_1-\tilde{A}\right)+\Psi\left(\hat{B}_2+\hat{H}_2\Delta\right)-\tilde{Q}\tilde{\Gamma}_1=0,\\
	&\dot{\Psi}+\Psi\left(\hat{A}_2+\hat{H}_2\Theta\right)+\tilde{A}^\top\Psi+\Sigma\hat{H}_1\Psi+\Sigma\hat{B}_1+\Phi\left(\hat{B}_1+\hat{H}_1\Theta\right)=0,\\
	&\Sigma(T)=\tilde{G},\quad\Phi(T)=-\tilde{G}\tilde{\Gamma}_2,\quad\Psi(T)=0.
\end{aligned}\right.
\end{equation}
\end{mythm}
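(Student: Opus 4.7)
The strategy is to decouple the fully coupled forward--backward systems (the CC system (\ref{cc}) and the individual Hamiltonian system (\ref{Hamiltioni})) by postulating affine ansätze for the adjoint processes $\varphi^{+}$ and $\varphi_{i}^{+}$, then matching coefficients via It\^{o}'s formula to read off the Riccati equations (\ref{Delta-Theta}) and (\ref{decoupleHamiltoni}). With these in hand, substituting into (\ref{u1+}) and (\ref{ui1+}) yields the feedback expressions (\ref{u1+feedback}) and (\ref{ui1+feedback}). I would assume \textbf{(A1)}--\textbf{(A4)} and $\gamma>\hat{\gamma}$ throughout (so that $P_{1},\Pi_{1},P_{2},\Pi_{2}$ and hence $\zeta,\eta$ are well-defined bounded deterministic functions).

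\textbf{Step 1 (mean field decoupling).} For the CC system (\ref{cc}), I make the ansatz $\varphi^{+}(t)=\Delta(t)m^{+}(t)+\Theta(t)x_{0}^{+}(t)$ with $\Delta,\Theta\colon[0,T]\to\mathbb{R}^{n\times n}$ differentiable. The terminal condition $\varphi^{+}(T)=(\tilde{G}-\tilde{G}\tilde{\Gamma}_{2})m^{+}(T)$ forces $\Delta(T)=\tilde{G}-\tilde{G}\tilde{\Gamma}_{2}$ and $\Theta(T)=0$. Applying It\^{o}'s formula to $\Delta m^{+}+\Theta x_{0}^{+}$, using the dynamics of $m^{+}$ (ODE) and $x_{0}^{+}$ (SDE with drift/diffusion given in (\ref{cc})), and equating the drift with $-[\tilde{A}^{\top}\varphi^{+}+(\tilde{Q}-\tilde{Q}\tilde{\Gamma}_{1})m^{+}]$ after substituting $\varphi^{+}=\Delta m^{+}+\Theta x_{0}^{+}$, I collect the coefficients of $m^{+}$ and $x_{0}^{+}$ separately. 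The $m^{+}$-block yields the first equation of (\ref{Delta-Theta}), while the $x_{0}^{+}$-block yields the second. The diffusion coefficient simply determines $\varphi_{0}^{+}=\Theta[\hat{A}_{3}x_{0}^{+}+\hat{B}_{3}m^{+}+\hat{H}_{3}\varphi^{+}]$, which is consistent and plays no role in the Riccati system. Plugging the ansatz into (\ref{u1+}) immediately gives (\ref{u1+feedback}).

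\textbf{Step 2 (individual decoupling).} For the i-th follower's Hamiltonian system (\ref{Hamiltioni}) I make the larger ansatz $\varphi_{i}^{+}(t)=\Sigma(t)x_{i}^{+}(t)+\Phi(t)m^{+}(t)+\Psi(t)x_{0}^{+}(t)$, where the coupling to $m^{+}$ and $x_{0}^{+}$ is inherited from Step 1. The terminal condition $\varphi_{i}^{+}(T)=\tilde{G}x_{i}^{+}(T)-\tilde{G}\tilde{\Gamma}_{2}m^{+}(T)$ forces $\Sigma(T)=\tilde{G}$, $\Phi(T)=-\tilde{G}\tilde{\Gamma}_{2}$, $\Psi(T)=0$. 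I apply It\^{o}'s formula, eliminate $u_{1i}^{+}$ via (\ref{ui1+}) so that $x_{i}^{+}$ evolves according to the drift given in (\ref{Hamiltioni}), and eliminate $\varphi^{+}$ (appearing through $\dot m^{+}$ and $\dot x_{0}^{+}$ after rewriting (\ref{cc}) with Step 1's ansatz) by $\Delta m^{+}+\Theta x_{0}^{+}$. Matching the drift with $-[\tilde{A}^{\top}\varphi_{i}^{+}+\tilde{Q}x_{i}^{+}-\tilde{Q}\tilde{\Gamma}_{1}m^{+}]$ and separating by $x_{i}^{+}, m^{+}, x_{0}^{+}$ gives the three equations of (\ref{decoupleHamiltoni}). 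Substitution into (\ref{ui1+}) produces (\ref{ui1+feedback}).

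\textbf{Anticipated obstacle.} The routine part is the It\^{o} expansion and terminal-condition matching; the bookkeeping is tedious because of the many substitutions, but the structural separation of variables is clean once one uses the abbreviations $\hat{A}_{j},\hat{B}_{j},\hat{H}_{j}$. The real delicate point is verifying that the coefficient $\tilde{R}_{1}+L^{\top}\tilde{R}_{0}L$ is uniformly invertible (guaranteed by \textbf{(A4)} since $\tilde{R}_{1}\gg 0$) and that the decoupled Riccati systems (\ref{Delta-Theta}) and (\ref{decoupleHamiltoni}) are solvable on all of $[0,T]$. The $\Sigma$-equation is a standard symmetric Riccati equation whose well-posedness follows from $\tilde{Q}\geq 0$, $\tilde{G}\geq 0$, and $-\hat{H}_{1}\geq 0$; given $\Sigma$ and $(\Delta,\Theta)$, the equations for $(\Phi,\Psi)$ are linear in the unknown and thus globally solvable. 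The coupled nonsymmetric $(\Delta,\Theta)$ system is the only genuinely nontrivial point: I would argue its solvability by viewing it as the Riccati equation associated with the augmented linear-quadratic control problem for $(m^{+},x_{0}^{+})$ with adjoint $\varphi^{+}$, so that existence follows from the underlying convex optimization (Problem \textbf{(F2)'} with $\tilde{Q},\tilde{G}\geq 0$ and $\tilde{R}_{1}+L^{\top}\tilde{R}_{0}L\gg 0$) and the $\gamma>\hat{\gamma}$ hypothesis that makes $P_{1},\Pi_{1},P_{2},\Pi_{2}$ and hence $\hat{A}_{j},\hat{B}_{j},\hat{H}_{j}$ bounded.
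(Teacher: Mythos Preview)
Your proposal is correct and follows essentially the same approach as the paper: postulate the affine ans\"atze $\varphi^{+}=\Delta m^{+}+\Theta x_{0}^{+}$ and $\varphi_{i}^{+}=\Sigma x_{i}^{+}+\Phi m^{+}+\Psi x_{0}^{+}$, apply It\^{o}'s formula, match terminal conditions and drift/diffusion coefficients to read off (\ref{Delta-Theta}) and (\ref{decoupleHamiltoni}), and substitute back into (\ref{u1+}) and (\ref{ui1+}). Your additional discussion of the solvability of the $(\Delta,\Theta)$ and $(\Sigma,\Phi,\Psi)$ systems goes beyond what the paper's proof addresses (the paper simply assumes solvability at this stage and defers the issue to the CC--incentive system in Theorem~\ref{thm4.3}), but this is a welcome refinement rather than a discrepancy.
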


\begin{proof}
According to the terminal condition of (\ref{cc}), we conjecture that
\begin{equation}
	\varphi^+(\cdot)=\Delta(\cdot) m^+(\cdot)+\Theta(\cdot) x_0^+(\cdot),
\end{equation}
with $\Delta(T)=\tilde{G}-\tilde{G}\tilde{\Gamma}_2$, $\Theta(T)=0$. Applying It\^{o}'s formula to $\varphi^+(\cdot)$, and by comparing its diffusion terms with the ones in the third equation of (\ref{cc}), we can obtain that $\Delta(\cdot)$ and $\Theta(\cdot)$ solve (\ref{Delta-Theta}).

If the non-symmetric coupled Riccati equations (\ref{Delta-Theta}) are solvable, then the related coupled CC system (\ref{cc}) are decoupled. We have
\begin{equation}
\left\{\begin{aligned}
	\dot{m}^+(t)&=\left(\hat{A}_1+\hat{H}_1\Delta\right)m^++\left(\hat{B}_1+\hat{H}_1\Theta\right)x_0^+,\\
	dx_0^+(t)&=\left[\left(\hat{A}_2+\hat{H}_2\Theta\right)x_0^++\left(\hat{B}_2+\hat{H}_2\Delta\right)m^+\right]dt\\
    &\quad +\left[\left(\hat{A}_3+\hat{H}_3\Theta\right)x_0^++\left(\hat{B}_3+\hat{H}_3\Delta\right)m^+\right]dW_0(t),\\
	m^+(0)&=x,\quad x_0^+(0)=\xi.
\end{aligned}\right.
\end{equation}
Next, let us decouple the Hamiltonian system (\ref{Hamiltioni}). By the above analysis, we only need to decouple the first two equations in (\ref{Hamiltioni}). We suppose
\begin{equation}\label{decouple varphi}
	\varphi_i^+(\cdot)=\Sigma(\cdot) x_i^+(\cdot)+\Phi(\cdot) m^+(\cdot)+\Psi(\cdot) x_0^+(\cdot),
\end{equation}
with $\Sigma(T)=\tilde{G}$, $\Phi(T)=-\tilde{G}\tilde{\Gamma}_2$, $\Psi(T)=0$. Applying It\^{o}'s formula to (\ref{decouple varphi}), it follows that
\begin{equation}
\begin{aligned}
	d\varphi_i^+&=\bigg\{\dot{\Sigma}x_i^++\dot{\Phi}m^++\dot{\Psi}x_0^++\Sigma\left[\tilde{A}x_i^++\left(\hat{A}_1-\tilde{A}\right)m^++\hat{B}_1x_0^++\hat{H}_1\varphi_i^+\right]\\
	&\qquad +\Phi\left[\left(\hat{A}_1+\hat{H}_1\Delta\right)m^++\left(\hat{B}_1+\hat{H}_1\Theta\right)x_0^+\right]\\
    &\qquad +\Psi\left[\left(\hat{A}_2+\hat{H}_2\Theta\right)x_0^++\left(\hat{B}_2+\hat{H}_2\Delta\right)m^+\right]\bigg\}dt\\
	&\quad+\Sigma\sigma dW_i(t)+\Psi\left[\left(\hat{A}_3+\hat{H}_3\Theta\right)x_0^++\left(\hat{B}_3+\hat{H}_3\Delta\right)m^+\right]dW_0(t).
\end{aligned}
\end{equation}
Comparing the above with the diffusion terms of the second equation in (\ref{Hamiltioni}), one gets
\begin{equation*}
\begin{aligned}
	\psi^+(\cdot)&=\Sigma\sigma,\quad \mathbb{P}\mbox{-}a.s.\\
	\psi_0^+(\cdot)&=\Psi\left[\left(\hat{A}_3+\hat{H}_3\Theta\right)x_0^+(\cdot)+\left(\hat{B}_3+\hat{H}_3\Delta\right)m^+(\cdot)\right],\quad \mathbb{P}\mbox{-}a.s.
\end{aligned}
\end{equation*}
By noting (\ref{decouple varphi}) and comparing the coefficients of the drift terms, we can get (\ref{decoupleHamiltoni}).
\end{proof}

\begin{Remark}
From Theorem \ref{feedback}, there exists the following relationship
\begin{equation}
	\Theta(\cdot)=\Psi(\cdot),\quad \Delta(\cdot)=\Sigma(\cdot)+\Phi(\cdot).
\end{equation}
\end{Remark}

For the target of (\ref{incentive form}) to hold as the incentive strategy of the leader $\mathcal{A}_0$, the limit of the arithmetic average of the followers' optimal controls (i.e., $\bar{u}_1^+(\cdot)$) must be
matched with the corresponding team-optimal strategies of the leader (i.e., $\bar{u}_1^*(\cdot)$) from some conditions. Therefore, we assume that the following equation holds:
\begin{equation}\label{achieve}
	\bar{u}_1^+(x_0^+,m^+)(t)=\bar{u}_1^*(x_0^*,m^*)(t),\quad t\in[0,T].
\end{equation}

In fact, from the incentive form (\ref{incentive form}), we have
\begin{equation}
	\bar{u}_0^+(x_0^+,m^+)(t)=L(t)\bar{u}_1^+(x_0^+,m^+)(t)+\zeta(t)x_0^+(t)+\eta(t)m^+(t).
\end{equation}
Subtracting the above equation from (\ref{incentive rel}) yields the following equation
\begin{equation}
\begin{aligned}
	&\bar{u}_0^+(x_0^+,m^+)(t)-\bar{u}_0^*(x_0^*,m^*)(t)\\
	&=L(t)\big[\bar{u}_1^+(x_0^+,m^+)(t)-\bar{u}_1^*(x_0^*,m^*)(t)\big]+\zeta(t)\big(x_0^+(t)-x_0^*(t)\big)+\eta(t)\big(m^+(t)-m^*(t)\big).
\end{aligned}
\end{equation}
Moreover, $(x_0^+(\cdot),m^+(\cdot))$ formally satisfies the following equations:
\begin{equation}
\left\{\begin{aligned}
	dx_0^+(t)&=\left[Ax_0^++B\bar{u}_0^+(x_0^+,m^+)+Fm^++H\bar{u}_1^+(x_0^+,m^+)+Ev^*(x_0^+,m^+)\right]dt\\
	&\quad+\left[Cx_0^++D\bar{u}_0^+(x_0^+,m^+)+Om^+\right]dW_0(t),\\
    \dot{m}^+(t)&=\left(\tilde{A}+\tilde{F}\right)m^++\tilde{B}\bar{u}_1^+(x_0^+,m^+)+\tilde{H}\bar{u}_0^+(x_0^+,m^+),\\
    x_0^+(0)&=\xi,\quad m^+(0)=x.
\end{aligned}\right.
\end{equation}
Subtracting the above equations from the closed-loop system (\ref{closed-loop system}), yields a homogeneous system with zero initial conditions, thus we can obtain
\begin{equation}
    x_0^+(t)=x_0^*(t),\quad m^+(t)=m^*(t),\quad t\in[0,T].
\end{equation}

From the above analysis, the second equation of (\ref{closed-loop rep}) and (\ref{u1+feedback}), the relation (\ref{achieve}) can be expressed as follows:
\begin{equation}
\begin{aligned}
	&-\left(\tilde{R}_1+L^\top\tilde{R}_0L\right)^{-1}\left\{\left[L^\top\tilde{R}_0\zeta+\left(\tilde{B}+\tilde{H}L\right)^\top\Theta\right] x_0^++\left[L^\top\tilde{R}_0\eta+\left(\tilde{B}+\tilde{H}L\right)^\top\Delta\right] m^+\right\}\\
	=&-R_1^{-1}\left[\left(H^\top P_1+\tilde{B}^\top P_2\right)x_0^*+\left(H^\top \Pi_1+\tilde{B}^\top \Pi_2\right)m^*\right],
\end{aligned}
\end{equation}
then we have
\begin{equation}
\begin{aligned}
	&\left(\tilde{R}_1+L^\top\tilde{R}_0L\right)^{-1}\left[L^\top\tilde{R}_0\zeta+\left(\tilde{B}+\tilde{H}L\right)^\top\Theta\right]-R_1^{-1}\left(H^\top P_1+\tilde{B}^\top P_2\right)=0,\\
	&\left(\tilde{R}_1+L^\top\tilde{R}_0L\right)^{-1}\left[L^\top\tilde{R}_0\eta+\left(\tilde{B}+\tilde{H}L\right)^\top\Delta\right]-R_1^{-1}\left(H^\top \Pi_1+\tilde{B}^\top \Pi_2\right)=0.
\end{aligned}
\end{equation}

Summarizing what is stated above, we obtain the following theorem with the incentive strategy of the leader $\mathcal{A}_0$ under the additional condition.
\begin{mythm}\label{thm4.3}
Let \textbf{(A1)}-\textbf{(A4)} hold, and $\gamma>\hat{\gamma}$. If the following CC-incentive system admits a solution $(\Delta^*(\cdot),\Theta^*(\cdot),L^*(\cdot))$:
\begin{equation}\label{cc-incentive}
\left\{\begin{aligned}
	&\dot{\Delta}+\Delta\hat{A}_1+\tilde{A}^\top\Delta+\Delta\hat{H}_1\Delta+\Theta\left(\hat{B}_2+\hat{H}_2\Delta\right)+\tilde{Q}-\tilde{Q}\tilde{\Gamma}_1=0,\\
	&\dot{\Theta}+\Theta\hat{A}_2+\tilde{A}^\top\Theta+\Theta\hat{H}_2\Theta+\Delta\left(\hat{B}_1+\hat{H}_1\Theta\right)=0,\\
    &\left(\tilde{R}_1+L^\top\tilde{R}_0L\right)^{-1}\left[L^\top\tilde{R}_0\zeta+\left(\tilde{B}+\tilde{H}L\right)^\top\Theta\right]-R_1^{-1}\left(H^\top P_1+\tilde{B}^\top P_2\right)=0,\\
	&\left(\tilde{R}_1+L^\top\tilde{R}_0L\right)^{-1}\left[L^\top\tilde{R}_0\eta+\left(\tilde{B}+\tilde{H}L\right)^\top\Delta\right]-R_1^{-1}\left(H^\top \Pi_1+\tilde{B}^\top \Pi_2\right)=0,\\
	&\Delta(T)=\tilde{G}-\tilde{G}\tilde{\Gamma}_2,\quad\Theta(T)=0,
\end{aligned}\right.
\end{equation}
then there exists the approximation incentive strategy set of the leader $\mathcal{A}_0$ for the stochastic mean field system, given by
\begin{equation}\label{approx incentive strategy}
	\Gamma_i^*\left(t,u_{1i}(\cdot),x_0(\cdot),m(\cdot)\right)=L^*(t)u_{1i}(t)+\zeta^*(t)x_0(t)+\eta^*(t)m(t),\\
\end{equation}
where
\begin{equation*}
\begin{aligned}
	\zeta^*(\cdot)&=-\left(R_0+D^\top P_1D\right)^{-1}\left(B^\top P_1+\tilde{H}^\top P_2+D^\top P_1C\right)+L^*R_1^{-1}\left(H^\top P_1+\tilde{B}^\top P_2\right),\\
	\eta^*(\cdot)&=-\left(R_0+D^\top P_1D\right)^{-1}\left(B^\top \Pi_1+\tilde{H}^\top \Pi_2+D^\top P_1O\right)+L^*R_1^{-1}\left(H^\top \Pi_1+\tilde{B}^\top \Pi_2\right).
\end{aligned}
\end{equation*}
\end{mythm}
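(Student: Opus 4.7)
The plan is to follow a synthesis route: starting from the assumed solvability of the CC-incentive system (\ref{cc-incentive}), construct the incentive (\ref{approx incentive strategy}) explicitly and verify that it forces the followers' limiting arithmetic average to coincide with the leader's team-optimal closed-loop direction $\bar{u}_1^*$.

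First, given $L^*(\cdot)$ from (\ref{cc-incentive}), I would define $\zeta^*(\cdot)$ and $\eta^*(\cdot)$ via the displayed formulas in (\ref{approx incentive strategy}), and substitute the resulting incentive $\Gamma_i^*$ into the follower's auxiliary state equation and cost, producing exactly the auxiliary problem \textbf{(F2)'}. By Theorem \ref{thm4.1}, the i-th follower's decentralized optimal control has the open-loop form (\ref{ui1+}); averaging over $i$ and sending $N \to \infty$ yields $\bar{u}_1^+$ as in (\ref{u1+}) together with the CC system (\ref{cc}).

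Next I would apply Theorem \ref{feedback} to decouple the CC system via the ansatz $\varphi^+ = \Delta m^+ + \Theta x_0^+$, which reduces the decoupling to the Riccati pair (\ref{Delta-Theta}). Because the first two lines of (\ref{cc-incentive}) coincide verbatim with (\ref{Delta-Theta}), we identify $\Delta = \Delta^*$ and $\Theta = \Theta^*$, so $\bar{u}_1^+$ admits the closed-loop feedback form (\ref{u1+feedback}) with these specific coefficients.

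The crucial matching step is then to compare the feedback coefficients of $\bar{u}_1^+$ in (\ref{u1+feedback}) with those of $\bar{u}_1^*$ read off from the second line of (\ref{closed-loop rep}); the $x_0$-coefficient equality is precisely the third equation of (\ref{cc-incentive}), and the $m$-coefficient equality is precisely the fourth, both of which hold by hypothesis. Hence $\bar{u}_1^+(x_0,m) = \bar{u}_1^*(x_0,m)$ pointwise. Combining with the incentive form (\ref{incentive form}) and the identity $\bar{u}_0^*(x_0^*,m^*) = L^*\bar{u}_1^*(x_0^*,m^*) + \zeta^* x_0^* + \eta^* m^*$ from (\ref{incentive rel}), I would conclude that the closed-loop systems governing $(x_0^+,m^+)$ and $(x_0^*,m^*)$ share the same drift, diffusion and initial data, so by uniqueness $x_0^+ \equiv x_0^*$ and $m^+ \equiv m^*$, and consequently $\bar{u}_0^+ \equiv \bar{u}_0^*$. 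This establishes (\ref{approx incentive strategy}) as the desired approximate incentive. The main obstacle is the coupled and highly nonlinear nature of the algebraic equations in (\ref{cc-incentive}): $L^*$ enters through $\zeta^*$ and $\eta^*$ (which themselves depend on $L^*$), through the Riccati coefficients $\hat{A}_i,\hat{B}_i,\hat{H}_i$, and through the last two matrix identities, all of which must hold simultaneously for every $t \in [0,T]$. Proving existence is genuinely hard and is therefore imposed as a hypothesis; once granted, the verification above is essentially a synthesis argument.
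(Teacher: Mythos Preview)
Your proposal is correct and follows essentially the same route as the paper: the paper's own proof simply reads ``direct from what is stated prior to this theorem,'' and that prior material is exactly the chain you spell out---Theorem~\ref{thm4.1} for the follower's stationarity condition, Theorem~\ref{feedback} for the Riccati decoupling identifying $(\Delta,\Theta)$, then matching the feedback coefficients of $\bar u_1^+$ against those of $\bar u_1^*$ to recover the last two equations of (\ref{cc-incentive}), and finally the uniqueness argument giving $x_0^+\equiv x_0^*$, $m^+\equiv m^*$. Your synthesis/verification ordering is the natural way to package that discussion as a proof.
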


\begin{proof}
The proof is direct from what is stated prior to this theorem, we omit it here.
\end{proof}

\section{Asymptotic optimality}

\subsection{Leader's asymptotic robust team optimality}

With $(\bar{u}_0^*(\cdot),\bar{u}_1^*(\cdot),v^*(\cdot))$ given by (\ref{closed-loop rep}) and $(x_0^*(\cdot),m^*(\cdot))$ by (\ref{closed-loop system}), for convenience, we set
\begin{equation*}
\left\{
\begin{aligned}
	\bar{u}_0^*&(\cdot)=\Theta_{11}^*x_0^*(\cdot)+\Theta_{12}^*m^*(\cdot),\\
	\bar{u}_1^*&(\cdot)=\Theta_{21}^*x_0^*(\cdot)+\Theta_{22}^*m^*(\cdot),
\end{aligned}
\right.
\end{equation*}
where
\begin{equation}\label{notation}
\begin{aligned}
	\Theta_{11}^*(\cdot)&:=-\left(R_0+D^\top P_1D\right)^{-1}\left(B^\top P_1+\tilde{H}^\top P_2+D^\top P_1C\right),\\
	\Theta_{12}^*(\cdot)&:=-\left(R_0+D^\top P_1D\right)^{-1}\left(B^\top \Pi_1+\tilde{H}^\top \Pi_2+D^\top P_1O\right),\\
	\Theta_{21}^*(\cdot)&:=-R_1^{-1}\left(H^\top P_1+\tilde{B}^\top P_2\right),\quad \Theta_{22}^*(\cdot):=-R_1^{-1}\left(H^\top \Pi_1+\tilde{B}^\top \Pi_2\right),
\end{aligned}
\end{equation}
and $P_1(\cdot)$, $P_2(\cdot)$, $\Pi_1(\cdot)$, $\Pi_2(\cdot)$ are given by (\ref{P}). Therefore, we may design the following mean-field strategies:
\begin{equation}\label{de-sts}
\left\{
\begin{aligned}
	\hat{u}_{0i}^*(\cdot)&=\Theta_{11}^*\hat{x}_0(\cdot)+\Theta_{12}^*\hat{m}(\cdot),\\
    \hat{u}_{1i}^*(\cdot)&=\Theta_{21}^*\hat{x}_0(\cdot)+\Theta_{22}^*\hat{m}(\cdot),\qquad\qquad 1\leq i\leq N,\\
    \hat{v}^*(\cdot)&=\gamma^{-2}R_2^{-1}E^\top\left(P_1\hat{x}_0(\cdot)+\Pi_1\hat{m}(\cdot)\right),\quad \mathbb{P}\mbox{-}a.s.,
\end{aligned}
\right.
\end{equation}
where $(\hat{x}_0(\cdot),\hat{m}(\cdot))$ is the centralized state satisfying the following state equations:
\begin{equation}\label{asy-state1}
\left\{\begin{aligned}
	d\hat{x}_0(t)&=\left[\left(A+\gamma^{-2}ER_2^{-1}E^\top P_1+B\Theta_{11}^*+H\Theta_{21}^*\right)\hat{x}_0+F\hat{x}^{(N)}\right.\\
	&\qquad \left.+\left(\gamma^{-2}ER_2^{-1}E^\top \Pi_1+B\Theta_{12}^*+H\Theta_{22}^*\right)\hat{m}\right]dt\\
    &\quad +\left[\left(C+D\Theta_{11}^*\right)\hat{x}_0+(O+D\Theta_{12}^*)\hat{m}\right]dW_0(t),\\ d\hat{x}^{(N)}(t)&=\left[\left(\tilde{A}+\tilde{F}\right)\hat{x}^{(N)}+\left(\tilde{B}\Theta_{21}^*+\tilde{H}\Theta_{11}^*\right)\hat{x}_0+\left(\tilde{B}\Theta_{22}^*+\tilde{H}\Theta_{12}^*\right)\hat{m}\right]dt\\
    &\quad +\frac{\sigma}{N}\sum_{i=1}^{N}dW_i(t),\\
	d\hat{m}(t)&=\left[\left(\tilde{A}+\tilde{F}+\tilde{B}\Theta_{22}^*+\tilde{H}\Theta_{12}^*\right)\hat{m}+\left(\tilde{B}\Theta_{21}^*+\tilde{H}\Theta_{11}^*\right)\hat{x}_0\right]dt,\\
	\hat{x}_0(0)&=\xi,\quad\hat{x}^{(N)}(0)=x,\quad\hat{m}(0)=x.
\end{aligned}\right.
\end{equation}

\begin{mylem}
Assume that \textbf{(A1)}-\textbf{(A3)} hold, and $\gamma>\hat{\gamma}$. It holds that
\begin{equation*}
	\sup_{t\in[0,T]}\mathbb{E}\left\vert\hat{x}^{(N)}(t)-\hat{m}(t)\right\vert^2=O\left(\frac{1}{N}\right),
\end{equation*}
where $\hat{x}^{(N)}(\cdot)$ and $\hat{m}(\cdot)$ satisfy (\ref{asy-state1}).
\end{mylem}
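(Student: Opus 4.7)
The plan is to form the difference $Y(t):=\hat{x}^{(N)}(t)-\hat{m}(t)$ and observe that the two equations in \eqref{asy-state1} governing $\hat{x}^{(N)}$ and $\hat{m}$ have identical drift structure, so that $Y(\cdot)$ satisfies a linear SDE whose forcing comes only from the idiosyncratic noise. Subtracting the third equation from the second in \eqref{asy-state1}, the common terms involving $\hat{x}_0$ and the coefficient $(\tilde{B}\Theta_{22}^*+\tilde{H}\Theta_{12}^*)\hat{m}$ cancel exactly, leaving
\begin{equation*}
dY(t)=(\tilde{A}+\tilde{F})(t)\,Y(t)\,dt+\frac{\sigma(t)}{N}\sum_{i=1}^{N}dW_i(t),\qquad Y(0)=0.
\end{equation*}
Because $W_1,\ldots,W_N$ are mutually independent, the effective diffusion coefficient has squared Frobenius norm $\frac{1}{N^2}\sum_{i=1}^N|\sigma(t)|^2=\frac{|\sigma(t)|^2}{N}$, which is the entire source of the $1/N$ rate.

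Next, I would apply It\^o's formula to $|Y(t)|^2$ and take expectation to remove the martingale part, obtaining
\begin{equation*}
\mathbb{E}|Y(t)|^2=\int_0^t\mathbb{E}\!\left[2\langle (\tilde{A}+\tilde{F})(s)Y(s),Y(s)\rangle+\frac{|\sigma(s)|^2}{N}\right]ds.
\end{equation*}
Using the uniform boundedness of $\tilde A,\tilde F,\sigma$ on $[0,T]$ from \textbf{(A1)}, one gets
\begin{equation*}
\mathbb{E}|Y(t)|^2\leq C_1\int_0^t\mathbb{E}|Y(s)|^2\,ds+\frac{C_2 T}{N},
\end{equation*}
for some constants $C_1,C_2>0$ independent of $N$. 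Gr\"onwall's inequality then yields $\mathbb{E}|Y(t)|^2\leq \frac{C_2 T}{N}e^{C_1 T}$ for every $t\in[0,T]$, and taking the supremum over $t$ gives the claimed $O(1/N)$ bound.

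The only point that requires a bit of care is the cancellation in the drift of $d(\hat{x}^{(N)}-\hat{m})$: one has to confirm that the coefficient of $\hat{x}_0$ is identical in both equations and that the term $(\tilde{B}\Theta_{22}^*+\tilde{H}\Theta_{12}^*)$ multiplies $\hat{m}$ in both equations, so what remains is simply $(\tilde{A}+\tilde{F})\,(\hat{x}^{(N)}-\hat{m})$. Once this structural cancellation is noted, the estimate is entirely standard; the bounds on $P_1,\Pi_1,P_2,\Pi_2$ (from $\gamma>\hat\gamma$ and the solvability of \eqref{P}) are not even needed here, because $\hat{x}_0$ drops out of the equation for $Y$. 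Hence no serious obstacle is expected; the proof reduces to an application of Gr\"onwall's inequality to a linear SDE with diffusion of order $N^{-1/2}$.
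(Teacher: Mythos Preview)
Your proposal is correct and follows exactly the paper's approach: the paper also subtracts the two equations in \eqref{asy-state1} to obtain $d(\hat{x}^{(N)}-\hat{m})=(\tilde{A}+\tilde{F})(\hat{x}^{(N)}-\hat{m})\,dt+\frac{\sigma}{N}\sum_{i=1}^N dW_i$ with zero initial condition, and then invokes ``standard estimates of linear SDEs'' to conclude. Your version is simply more explicit, spelling out the It\^o/Gr\"onwall computation that the paper leaves implicit.
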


\begin{proof}
By (\ref{asy-state1}), we have the following dynamics
\begin{equation*}
\left\{\begin{aligned}
	d\left(\hat{x}^{(N)}-\hat{m}\right)&=\left(\tilde{A}+\tilde{F}\right)\left(\hat{x}^{(N)}-\hat{m}\right)dt+\frac{\sigma}{N}\sum_{i=1}^{N}dW_i(t),\\
	\hat{x}^{(N)}(0)-\hat{m}(0)&=0.
\end{aligned}\right.
\end{equation*}
By the standard estimates of linear SDEs, we get
\begin{equation*}
	\mathbb{E}\left\vert\hat{x}^{(N)}(t)-\hat{m}(t)\right\vert^2\leq\frac{C}{N}.
\end{equation*}
Then the lemma follows.
\end{proof}

\begin{mythm}\label{leader asy thm}
Suppose that \textbf{(A1)}-\textbf{(A3)} hold, and $\gamma>\hat{\gamma}$. The set of mean-field strategies $(\hat{u}_0^*(\cdot),\hat{u}_1^*(\cdot),\hat{v}^*(\cdot))$ given by (\ref{de-sts}) has the asymptotic robust team-optimality, i.e.,
\begin{equation}\label{asymptotic robust team-optimality}
	\bigg\vert \mathcal{J}_0\left(\hat{u}_0^*(\cdot),\hat{u}_1^*(\cdot),\hat{v}^*(\cdot)\right)-\inf_{(u_0(\cdot),u_1(\cdot))\in\,\mathcal{U}_{0c}}\sup_{v(\cdot)\in\,\mathcal{U}_{vc}}
\mathcal{J}_0(u_0(\cdot),u_1(\cdot),v(\cdot))\bigg\vert=O\left(\frac{1}{\sqrt{N}}\right).
\end{equation}
\end{mythm}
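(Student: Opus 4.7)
The plan is to bound $|\mathcal{J}_0(\hat{u}_0^*,\hat{u}_1^*,\hat{v}^*) - V_0^c|$, where $V_0^c:=\inf_{(u_0,u_1)\in\mathcal{U}_{0c}}\sup_{v\in\mathcal{U}_{vc}} \mathcal{J}_0(u_0,u_1,v)$, by inserting the limiting saddle value $V_0(\xi,x) = J_0(\bar{u}^*,v^*)$ and splitting
\[
|\mathcal{J}_0(\hat{u}^*,\hat{v}^*)-V_0^c|\leq \underbrace{|\mathcal{J}_0(\hat{u}^*,\hat{v}^*) - V_0(\xi,x)|}_{(\mathrm{I})} + \underbrace{|V_0(\xi,x)-V_0^c|}_{(\mathrm{II})},
\]
showing each summand is $O(1/\sqrt N)$.

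For $(\mathrm{I})$, I would first establish state convergence. Subtracting (\ref{closed-loop system}) from (\ref{asy-state1}), the only discrepancy lies in $\hat{x}^{(N)}$ versus $m^*$ appearing in the drift of $x_0$. Decompose $\hat{x}^{(N)}-m^* = (\hat{x}^{(N)}-\hat{m}) + (\hat{m}-m^*)$: the first summand is $O(1/\sqrt N)$ in $L^2$ by the preceding Lemma, while $(\hat{m}-m^*,\,\hat{x}_0-x_0^*)$ obeys a closed linear SDE/ODE system driven only by $\hat{x}^{(N)}-\hat{m}$. A BDG-Gronwall estimate (using the boundedness of coefficients from \textbf{(A1)}) yields
\[
\sup_{t\in[0,T]}\mathbb{E}|\hat{x}_0(t)-x_0^*(t)|^2 + \sup_{t\in[0,T]}|\hat{m}(t)-m^*(t)|^2 = O(1/N),
\]
and the affine feedbacks in (\ref{de-sts}) then give $\|\hat{u}^*-\bar{u}^*\|_{L^2}^2 + \|\hat{v}^*-v^*\|_{L^2}^2 = O(1/N)$. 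Expanding $\mathcal{J}_0(\hat{u}^*,\hat{v}^*)-J_0(\bar{u}^*,v^*)$ termwise via the identity $|a|_M^2 - |b|_M^2 = \langle M(a+b),a-b\rangle$, applying Cauchy-Schwarz, and reusing the preceding Lemma for the $\hat{x}^{(N)}$ vs.\ $m^*$ terms, yields $(\mathrm{I}) = O(1/\sqrt N)$.

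For $(\mathrm{II})$, the upper bound $V_0^c \leq V_0 + O(1/\sqrt N)$ comes from $V_0^c\leq \sup_v\mathcal{J}_0(\hat{u}^*,v)$. By Proposition \ref{Prop3.3} (and $\gamma>\hat\gamma$, \textbf{(A3)}), the map $v\mapsto\mathcal{J}_0(\hat{u}^*,v)$ is uniformly concave, so its unique maximizer $v^\#$ satisfies a first-order condition parallel to the third line of (\ref{stationary1'}) but with the centralized state replacing $(x_0^*,m^*)$; subtracting the stationarity condition for $\hat v^*$ and plugging in the $L^2$ state estimates gives $\|v^\#-\hat v^*\|_{L^2}^2 = O(1/N)$, and the quadratic structure implies $\sup_v\mathcal{J}_0(\hat u^*,v) = \mathcal{J}_0(\hat u^*,\hat v^*) + O(1/N)$, which combined with $(\mathrm{I})$ bounds this direction.

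The matching lower bound $V_0^c \geq V_0 - O(1/\sqrt N)$ is the hardest step. The plan is to feed the limiting disturbance feedback $\tilde v^*(x_0,\mu) := \gamma^{-2}R_2^{-1}E^\top(P_1 x_0 + \Pi_1\mu)$ with $\mu = x^{(N)}$ back into the centralized system (an admissible element of $\mathcal U_{vc}$), obtaining $V_0^c\geq \inf_u\mathcal J_0(u,\tilde v^*)$. The subtlety is that an arbitrary centralized $u\in\mathcal U_{0c}$ may break the conditional i.i.d.\ structure of the followers, so the preceding Lemma does not apply directly; however, since each $x_i$'s drift is linear with coefficients bounded uniformly in $u$ (entering only through the symmetric averages $u_0^{(N)},u_1^{(N)},x^{(N)}$), a standard martingale/variance argument yields $\mathbb E|x^{(N)}(t)-\mathbb E[x^{(N)}(t)\mid \mathcal F_t^0]|^2 = O(1/N)$ uniformly in $u$. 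This reduces $\inf_u\mathcal J_0(u,\tilde v^*)$ to an $O(1/\sqrt N)$-perturbation of the limiting convex minimization $\inf_{\bar u} J_0(\bar u, v^*) = V_0$, delivering the lower bound. Summing $(\mathrm{I})$ and $(\mathrm{II})$ then completes the proof.
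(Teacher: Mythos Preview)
Your approach differs substantially from the paper's, and the lower half of step $(\mathrm{II})$ has a genuine gap.

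The paper does not route through $V_0(\xi,x)$. It writes the exact quadratic identity $\mathcal{J}_0(u,v)=\mathcal{J}_0(\hat u^*,\hat v^*)+\tilde{\mathcal J}_0(\tilde u,\tilde v)+I$ with $\tilde u=u-\hat u^*$, $\tilde v=v-\hat v^*$, shows the homogeneous part satisfies $\inf_u\sup_v\tilde{\mathcal J}_0=0$, and then handles the cross term $I$ by applying It\^o's formula to $\langle\tilde x_0,\,P_1\hat x_0+\Pi_1\hat m\rangle+\langle\tilde x^{(N)},\,P_2\hat x_0+\Pi_2\hat m\rangle$. The Riccati equations (\ref{P1})--(\ref{Pi2}) together with the stationarity relations built into (\ref{de-sts}) force the coefficients of $\tilde u_0^{(N)},\tilde u_1^{(N)},\tilde v$ in $I$ to vanish identically; what survives is a bilinear pairing of $(\tilde x_0,\tilde x^{(N)})$ against the small quantity $\hat x^{(N)}-\hat m$, which is $O(1/\sqrt N)$ by the preceding Lemma and Cauchy--Schwarz. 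This Riccati-based cancellation is precisely what lets the paper avoid any uniform-in-$u$ estimate on the follower population.

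Your lower bound for $(\mathrm{II})$ hinges on $\mathbb E\bigl|x^{(N)}(t)-\mathbb E[x^{(N)}(t)\mid\mathcal F_t^0]\bigr|^2=O(1/N)$ uniformly over $u\in\mathcal U_{0c}$, but this fails. First, the drift of $x_i$ in (\ref{statei}) contains $u_{0i},u_{1i}$ individually, not only through symmetric averages as you assert. Second, and more importantly, even at the level of $x^{(N)}$ the averages $u_0^{(N)},u_1^{(N)}$ are $\mathcal F_t$-adapted for centralized controls and can carry order-one conditional variance with respect to $\mathcal F_t^0$; nothing prevents an adversarial minimizer from making the empirical mean strongly dependent on the idiosyncratic noises. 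Hence your reduction of $\inf_u\mathcal J_0(u,\tilde v^*)$ to an $O(1/\sqrt N)$-perturbation of the limiting convex problem is unjustified. A repair is possible by observing that the leader's cost factors through $(x_0,x^{(N)},u_0^{(N)},u_1^{(N)},v)$ and that the only discrepancy from the limiting saddle problem is the $\frac{\sigma}{N}\sum_i dW_i$ noise in $x^{(N)}$, but carrying that through cleanly essentially reproduces the paper's completing-the-square plus Riccati computation rather than a propagation-of-chaos variance estimate.
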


\begin{proof}
Let $\tilde{u}_{0i}=u_{0i}-\hat{u}_{0i}^*$, $\tilde{u}_{1i}=u_{1i}-\hat{u}_{1i}^*$, $\tilde{v}=v-\hat{v}^*$, $\tilde{x}_0=x_0-\hat{x}_0$, $\tilde{x}^{(N)}=x^{(N)}-\hat{x}^{(N)}$, $\tilde{u}_0^{(N)}=u_0^{(N)}-\hat{u}_0^{*(N)}$, $\tilde{u}_1^{(N)}=u_1^{(N)}-\hat{u}_1^{*(N)}$. Then by (\ref{state0}), (\ref{statei}) and (\ref{asy-state1}), we have
\begin{equation}
\left\{\begin{aligned}
	d\tilde{x}_0(t)&=\left[A\tilde{x}_0+B\tilde{u}_0^{(N)}+F\tilde{x}^{(N)}+H\tilde{u}_1^{(N)}+E\tilde{v}\right]dt
    +\left[C\tilde{x}_0+D\tilde{u}_0^{(N)}+O\tilde{x}^{(N)}\right]dW_0(t),\\
	d\tilde{x}^{(N)}(t)&=\left[\left(\tilde{A}+\tilde{F}\right)\tilde{x}^{(N)}+\tilde{B}\tilde{u}_1^{(N)}
     +\tilde{H}\tilde{u}_0^{(N)}\right]dt+\frac{\sigma}{N}\sum_{i=1}^{N}dW_i,\\
	\tilde{x}_0(0)&=0,\quad\tilde{x}^{(N)}(0)=0.
\end{aligned}\right.
\end{equation}
It follows from (\ref{cost0}) that
\begin{equation}\label{decomp}
	\mathcal{J}_0(u_0(\cdot),u_1(\cdot),v(\cdot))=\mathcal{J}_0(\hat{u}_0^*(\cdot),\hat{u}_1^*(\cdot),\hat{v}^*(\cdot))+\tilde{\mathcal{J}}_0(\tilde{u}_0(\cdot),\tilde{u}_1(\cdot),\tilde{v}(\cdot))+I,
\end{equation}
where
\begin{equation*}
\begin{aligned}
	\tilde{\mathcal{J}}_0(\tilde{u}_0,\tilde{u}_1,\tilde{v})&:=\mathbb{E}\bigg\{\int_0^T\left[\left\vert \tilde{x}_0-\Gamma_1\tilde{x}^{(N)}\right\vert_Q^2 +\left\vert \tilde{u}_0^{(N)}\right\vert_{R_0}^2 +\left\vert \tilde{u}_1^{(N)}\right\vert_{R_1}^2 -\gamma^2\vert \tilde{v}\vert_{R_2}^2\right](t)dt\\
	&\qquad\quad +\left\vert \tilde{x}_0(T)-\Gamma_2\tilde{x}^{(N)}(T)\right\vert_G^2 \bigg\},\\
	I&:=2\mathbb{E}\bigg\{\int_0^T\left[\left\langle Q\left(\tilde{x}_0-\Gamma_1\tilde{x}^{(N)}\right),\hat{x}_0-\Gamma_1\hat{x}^{(N)}\right\rangle+\left\langle R_0\tilde{u}_0^{(N)},\hat{u}_0^{*(N)}\right\rangle\right.\\
\end{aligned}
\end{equation*}
\begin{equation*}
\begin{aligned}
	&\qquad\qquad\qquad +\left.\left\langle R_1\tilde{u}_1^{(N)},\hat{u}_1^{*(N)}\right\rangle-\gamma^2\langle R_2\tilde{v},\hat{v}^*\rangle\right](t)dt\\
	&\qquad\qquad +\left\langle G\left(\tilde{x}_0(T)-\Gamma_2\tilde{x}^{(N)}(T)\right),\hat{x}_0(T)-\Gamma_2\hat{x}^{(N)}(T)\right\rangle\bigg\}.
\end{aligned}
\end{equation*}
Due to the (uniformly) positive-definiteness conditions on $Q(\cdot)$, $R_0(\cdot)$, $R_1(\cdot)$, $R_2(\cdot)$ and $G$, we can get
\begin{equation}
	\inf_{(u_0(\cdot),u_1(\cdot))\in\,\mathcal{U}_{0c}}\sup_{v(\cdot)\in\,\mathcal{U}_{vc}}\tilde{\mathcal{J}}_0(\tilde{u}_0(\cdot),\tilde{u}_1(\cdot),\tilde{v}(\cdot))\geq0.
\end{equation}
By Proposition \ref{Prop3.3} and Proposition \ref{Prop3.5}, Problem \textbf{(L1)} is concave in $v$, then
\begin{equation}
	\inf_{(u_0(\cdot),u_1(\cdot))\in\,\mathcal{U}_{0c}}\tilde{\mathcal{J}}_0(\tilde{u}_0(\cdot),\tilde{u}_1(\cdot),\tilde{v}(\cdot))\leq\tilde{\mathcal{J}}_0(0,0,\tilde{v}(\cdot))\leq0.
\end{equation}
Taking the supremum with respect to $v$ on both sides of the inequality, we get
\begin{equation}
	\sup_{v(\cdot)\in\,\mathcal{U}_{vc}}\inf_{(u_0(\cdot),u_1(\cdot))\in\,\mathcal{U}_{0c}}\tilde{\mathcal{J}}_0(\tilde{u}_0(\cdot),\tilde{u}_1(\cdot),\tilde{v}(\cdot))\leq0.
\end{equation}
By \cite{Mou06}, we have
\begin{equation}\label{decomp2}
	\inf_{(u_0(\cdot),u_1(\cdot))\in\,\mathcal{U}_{0c}}\sup_{v(\cdot)\in\,\mathcal{U}_{vc}}\tilde{\mathcal{J}}_0(\tilde{u}_0(\cdot),\tilde{u}_1(\cdot),\tilde{v}(\cdot))=0.
\end{equation}
Since $\hat{x}^{(N)}(\cdot)=\hat{x}^{(N)}(\cdot)-\hat{m}(\cdot)+\hat{m}(\cdot)$, we can rewrite the equation of $I$ as follows:
\begin{equation}
\begin{aligned}
	I&=2\mathbb{E}\bigg\{\int_0^T\left[\left\langle Q\left(\tilde{x}_0-\Gamma_1\tilde{x}^{(N)}\right),\hat{x}_0-\Gamma_1\hat{m}\right\rangle +\left\langle R_0\tilde{u}_0^{(N)},\hat{u}_0^{*(N)}\right\rangle +\left\langle R_1\tilde{u}_1^{(N)},\hat{u}_1^{*(N)}\right\rangle\right.\\
	&\qquad\qquad\qquad -\gamma^2\langle R_2\tilde{v},\hat{v}^*\rangle -\left\langle Q\left(\tilde{x}_0-\Gamma_1\tilde{x}^{(N)}\right),\Gamma_1(\hat{x}^{(N)}-\hat{m})\right\rangle\Big](t)dt\\
    &\qquad\quad +\left\langle G\left(\tilde{x}_0(T)-\Gamma_2\tilde{x}^{(N)}(T)\right),\hat{x}_0(T)-\Gamma_2\hat{m}(T)\right\rangle\\
	&\qquad\quad -\left\langle G\left(\tilde{x}_0(T)-\Gamma_2\tilde{x}^{(N)}(T)\right),\Gamma_2(\hat{x}^{(N)}(T)-\hat{m}(T))\right\rangle\bigg\}.
\end{aligned}
\end{equation}
Under the notation (\ref{notation}), the equations of $P_1(\cdot)$, $P_2(\cdot)$, $\Pi_1(\cdot)$ and $\Pi_2(\cdot)$ are equivalent to
\begin{equation}
\left\{\begin{aligned}
	&\dot{P}_1+P_1A+A^\top P_1+C^\top P_1C+\gamma^{-2}P_1ER_2^{-1}E^\top P_1+Q\\
	&\quad +\left(P_1B+\Pi_1\tilde{H}+C^\top P_1D\right)\Theta_{11}^*+\left(P_1H+\Pi_1\tilde{B}\right)\Theta_{21}^*=0,\\
	&\dot{\Pi}_1+\Pi_1\left(\tilde{A}+\tilde{F}\right)+A^\top\Pi_1+\gamma^{-2}P_1ER_2^{-1}E^\top\Pi_1+P_1F-Q\Gamma_1+C^\top P_1O\\
	&\quad +\left(P_1B+\Pi_1\tilde{H}+C^\top P_1D\right)\Theta_{12}^*+\left(P_1H+\Pi_1\tilde{B}\right)\Theta_{22}^*=0,\\
	&\dot{P}_2+P_2A+\left(\tilde{A}+\tilde{F}\right)^\top P_2+\gamma^{-2}P_2ER_2^{-1}E^\top P_1+F^\top P_1-\Gamma_1^\top Q+O^\top P_1C\\
	&\quad +\left(P_2B+\Pi_2\tilde{H}+O^\top P_1D\right)\Theta_{11}^*+\left(P_2H+\Pi_2\tilde{B}\right)\Theta_{21}^*=0,\\
	&\dot{\Pi}_2+\Pi_2\left(\tilde{A}+\tilde{F}\right)+\left(\tilde{A}+\tilde{F}\right)^\top\Pi_2+\gamma^{-2}P_2ER_2^{-1}E^\top \Pi_1+P_2F+O^\top P_1O\\
	&\quad +F^\top\Pi_1+\Gamma_1^\top Q\Gamma_1+\left(P_2B+\Pi_2\tilde{H}+O^\top P_1D\right)\Theta_{12}^*+\left(P_2H+\Pi_2\tilde{B}\right)\Theta_{22}^*=0,\\
	&P_1(T)=G,\quad\Pi_1(T)=-G\Gamma_2,\quad P_2(T)=-\Gamma_2^\top G,\quad\Pi_2(T)=\Gamma_2^\top G\Gamma_2.
\end{aligned}\right.
\end{equation}
Applying It\^{o}'s formula to $P_1(\cdot)\hat{x}_0(\cdot)+\Pi_1(\cdot)\hat{m}(\cdot)$, $P_2(\cdot)\hat{x}_0(\cdot)+\Pi_2(\cdot)\hat{m}(\cdot)$, respectively, we obtain
\begin{equation}
\begin{aligned}
	&d(P_1\hat{x}_0+\Pi_1\hat{m})=-\left[A^\top(P_1\hat{x}_0+\Pi_1\hat{m})+C^\top P_1(C\hat{x}_0+O\hat{m})+Q(\hat{x}_0-\Gamma_1\hat{m})+C^\top P_1D(\Theta_{11}^*\hat{x}_0\right.\\
    &\quad +\left.\Theta_{12}^*\hat{m})+P_1F(\hat{m}-\hat{x}^{(N)})\right]dt+\big[P_1(C\hat{x}_0+O\hat{m})+P_1D(\Theta_{11}^*\hat{x}_0+\Theta_{12}^*\hat{m})\big]dW_0(t),\\
	&d(P_2\hat{x}_0+\Pi_2\hat{m})=-\left[\left(\tilde{A}+\tilde{F}\right)^\top(P_2\hat{x}_0+\Pi_2\hat{m})+F^\top(P_1\hat{x}_0+\Pi_1\hat{m})\right. -\Gamma_1^\top Q(\hat{x}_0-\Gamma_1\hat{m})\\
	&\quad+P_2F(\hat{m}-\hat{x}^{(N)})+O^\top P_1(C\hat{x}_0+O\hat{m})\bigg]dt+\big[P_2(C\hat{x}_0+O\hat{m})+P_2D(\Theta_{11}^*\hat{x}_0+\Theta_{12}^*\hat{m})\big]dW_0(t).
\end{aligned}
\end{equation}
Applying It\^{o}'s formula to $\left\langle\tilde{x}_0(\cdot),P_1(\cdot)\hat{x}_0(\cdot)+\Pi_1(\cdot)\hat{m}(\cdot)\right\rangle$ and $\left\langle\tilde{x}^{(N)}(\cdot),P_2(\cdot)\hat{x}_0(\cdot)+\Pi_2(\cdot)\hat{m}(\cdot)\right\rangle$, we can get
\begin{equation}
\begin{aligned}
	&\mathbb{E}\left\langle\tilde{x}_0(T),G(\hat{x}_0(T)-\Gamma_2\hat{m}(T))\right\rangle=\mathbb{E}\int_0^T\left[\left\langle B\tilde{u}_0^{(N)}+F\tilde{x}^{(N)}+H\tilde{u}_1^{(N)}+E\tilde{v},P_1\hat{x}_0+\Pi_1\hat{m}\right\rangle\right.\\
	&\qquad\qquad +\left.\left\langle D^\top P_1(C\hat{x}_0+O\hat{m}),\tilde{u}_0^{(N)}\right\rangle+\left\langle\tilde{x}^{(N)},O^\top P_1(C\hat{x}_0+O\hat{m})\right\rangle\right.\\
	&\qquad\qquad -\left.\left\langle\tilde{x}_0,Q(\hat{x}_0-\Gamma_1\hat{m})+P_1F\left(\hat{m}-\hat{x}^{(N)}\right)\right\rangle +\left\langle D^\top P_1D\hat{u}_0^{*(N)},\tilde{u}_0^{(N)}\right\rangle\right]dt,\\
	&-\mathbb{E}\left\langle\Gamma_2\tilde{x}^{(N)}(T),G(\hat{x}_0(T)-\Gamma_2\hat{m}(T))\right\rangle=\mathbb{E}\int_0^T\left[\left\langle\tilde{B}\tilde{u}_1^{(N)}+\tilde{H}\tilde{u}_0^{(N)},P_2\hat{x}_0+\Pi_2\hat{m}\right\rangle\right.\\
	&\qquad\qquad -\left. \left\langle F\tilde{x}^{(N)},P_1\hat{x}_0+\Pi_1\hat{m}\right\rangle+\left\langle Q\Gamma_1\tilde{x}^{(N)},\hat{x}_0-\Gamma_1\hat{m}\right\rangle -\left\langle\tilde{x}^{(N)},P_2F(\hat{m}-\hat{x}^{(N)})\right\rangle\right. \\
	&\qquad\qquad\left. -\left\langle\tilde{x}^{(N)},O^\top P_1(C\hat{x}_0+O\hat{m})\right\rangle\right]dt.
\end{aligned}
\end{equation}
Adding the above two equations together, we have
\begin{equation}
\begin{aligned}
    &\mathbb{E}\left\langle\tilde{x}_0(T)-\Gamma_2\tilde{x}^{(N)}(T),G(\hat{x}_0(T)-\Gamma_2\hat{m}(T))\right\rangle\\
    &=\mathbb{E}\int_0^T\left[\left\langle\tilde{u}_0^{(N)},B^\top(P_1\hat{x}_0+\Pi_1\hat{m})+\tilde{H}^\top(P_2\hat{x}_0+\Pi_2\hat{m})+D^\top P_1(C\hat{x}_0+O\hat{m})+D^\top P_1D\hat{u}_0^{*(N)}\right\rangle\right.\\
    &\qquad\qquad +\left\langle\tilde{u}_1^{(N)},H^\top(P_1\hat{x}_0+\Pi_1\hat{m})+\tilde{B}^\top(P_2\hat{x}_0+\Pi_2\hat{m})\right\rangle +\left\langle\tilde{v},E^\top(P_1\hat{x}_0+\Pi_1\hat{m})\right\rangle\\
    &\qquad\qquad -\left.\left\langle Q(\tilde{x}_0-\Gamma_1\tilde{x}^{(N)}),\hat{x}_0-\Gamma_1\hat{m}\right\rangle -\left\langle P_1\tilde{x}_0+P_2\tilde{x}^{(N)},F(\hat{m}-\hat{x}^{(N)})\right\rangle\right]dt.
\end{aligned}
\end{equation}
Substituting it into $I$, we have
\begin{equation}
\begin{aligned}
	I&=2\mathbb{E}\bigg\{\int_0^T\left[\left\langle\tilde{u}_0^{(N)},\left(R_0+D^\top P_1D\right)\hat{u}_0^{*(N)}+B^\top(P_1\hat{x}_0+\Pi_1\hat{m})+\tilde{H}^\top(P_2\hat{x}_0+\Pi_2\hat{m})\right.\right.\\
	&\qquad\qquad\quad +\left.D^\top P_1(C\hat{x}_0+O\hat{m})\right\rangle +\left\langle\tilde{u}_1^{(N)},R_1\hat{u}_1^{*(N)}+H^\top(P_1\hat{x}_0+\Pi_1\hat{m})+\tilde{B}^\top(P_2\hat{x}_0+\Pi_2\hat{m})\right\rangle\\
	&\qquad\qquad\quad +\left.\left\langle\tilde{v},E^\top(P_1\hat{x}_0+\Pi_1\hat{m})-\gamma^2R_2\hat{v}^*\right\rangle -\left\langle P_1\tilde{x}_0+P_2\tilde{x}^{(N)},F(\hat{m}-\hat{x}^{(N)})\right\rangle\right]dt\\
	&\qquad\quad +\left\langle G\left(\tilde{x}_0(T)-\Gamma_2\tilde{x}^{(N)}(T)\right),\Gamma_2\left(\hat{m}(T)-\hat{x}^{(N)}(T)\right)\right\rangle\bigg\}.
\end{aligned}
\end{equation}
From (\ref{de-sts}), it implies that
\begin{equation}
\left\{
\begin{aligned}
	&\left(R_0+D^\top P_1D\right)\hat{u}_0^{*(N)}+B^\top(P_1\hat{x}_0+\Pi_1\hat{m})+\tilde{H}^\top(P_2\hat{x}_0+\Pi_2\hat{m})+D^\top P_1(C\hat{x}_0+O\hat{m})=0,\\
	&R_1\hat{u}_1^{*(N)}+H^\top(P_1\hat{x}_0+\Pi_1\hat{m})+\tilde{B}^\top(P_2\hat{x}_0+\Pi_2\hat{m})=0,\\
	&\gamma^2R_2\hat{v}^*-E^\top(P_1\hat{x}_0+\Pi_1\hat{m})=0,\quad \mathbb{P}\mbox{-}a.s.,
\end{aligned}
\right.
\end{equation}
therefore we can obtain
\begin{equation}
\begin{aligned}
	I=2\mathbb{E}\bigg\{\int_0^T\left\langle P_1\tilde{x}_0+P_2\tilde{x}^{(N)},F\left(\hat{x}^{(N)}-\hat{m}\right)\right\rangle dt+\left\langle G\left(\tilde{x}_0-\Gamma_2\tilde{x}^{(N)}\right)(T),\Gamma_2\left(\hat{m}-\hat{x}^{(N)}\right)(T)\right\rangle\bigg\}.
\end{aligned}
\end{equation}
By using H\"{o}lder's inequality and the boundedness of coefficients and solutions, we have
\begin{equation}\label{decomp3}
	\vert I\vert\leq C\left(\mathbb{E}\int_0^T\left\vert\hat{x}^{(N)}(t)-\hat{m}(t)\right\vert^2dt\right)^{\frac{1}{2}} +C\left(\mathbb{E}\left\vert\hat{x}^{(N)}(T)-\hat{m}(T)\right\vert^2\right)^{\frac{1}{2}}\leq\frac{C}{\sqrt{N}}.
\end{equation}
Therefore, from (\ref{decomp}), (\ref{decomp2}) and (\ref{decomp3}), we can get (\ref{asymptotic robust team-optimality}). The theorem follows.
\end{proof}

\subsection{Followers' asymptotic Nash equilibrium}

Theorem \ref{thm4.3} established the leader's (approximate) incentive strategy $\Gamma_i^*$ given by (\ref{approx incentive strategy}) that attains the leader's desired performance (\ref{achieve}) and it sustains followers' Nash equilibrium for the auxiliary limiting Problem \textbf{(F2)'}. Based on the decentralized strategy of Problem \textbf{(F2)'} given by (\ref{ui1+feedback}), we may design the following mean-field strategy:\vspace{-1mm}
\begin{equation}\hspace{-1mm}
\left\{
\begin{aligned}
	\check{u}_{1i}^+(\cdot)&=-\left(\tilde{R}_1+L^{*\top}\tilde{R}_0L^*\right)^{-1}\left\{\left(\tilde{B}+\tilde{H}L^*\right)^\top\Sigma^* \check{x}_i+\left[L^{*\top}\tilde{R}_0\zeta^*\right.\right.\\
    &\quad +\left.\left(\tilde{B}+\tilde{H}L^*\right)^\top\Psi^*\right] \check{x}_0+\left.\left[L^{*\top}\tilde{R}_0\eta^*+\left(\tilde{B}+\tilde{H}L^*\right)^\top\Phi^*\right] \check{m}\right\},\\
    \bar{\check{u}}_1^+(\cdot)&=\lim\limits_{N\to\infty}\frac{1}{N}\sum_{i=1}^{N}\check{u}_{1i}^+(\cdot)=\lim\limits_{N\to\infty}\check{u}_1^{+(N)}(\cdot)=-\left(\tilde{R}_1+L^{*\top}\tilde{R}_0L^*\right)^{-1}\\
    &\quad \times\left\{\left[L^{*\top}\tilde{R}_0\zeta^*+\left(\tilde{B}+\tilde{H}L^*\right)^\top\Theta^*\right] \check{x}_0\right.+\left.\left[L^{*\top}\tilde{R}_0\eta^*+\left(\tilde{B}+\tilde{H}L^*\right)^\top\Delta^*\right] \check{m}\right\},
\end{aligned}
\right.
\end{equation}
where
$(\check{x}_0(\cdot),\check{m}(\cdot),\check{x}_i(\cdot))$ is the centralized state satisfying the following equations (for convenience, we have suppressed the superscript *):\vspace{-1mm}
\begin{equation}\label{asy optimal state}
\left\{\begin{aligned}
	d\check{x}_i&=\left\{\left[\tilde{A}-\left(\tilde{B}+\tilde{H}L\right)\left(\tilde{R}_1+L^\top\tilde{R}_0L\right)^{-1}\left(\tilde{B}+\tilde{H}L\right)^\top\Sigma\right]\check{x}_i+\tilde{F}\check{x}^{(N)}\right.\\
	&\quad\ +\left[\tilde{H}\eta-\left(\tilde{B}+\tilde{H}L\right)\left(\tilde{R}_1+L^\top\tilde{R}_0L\right)^{-1}\left[L^\top\tilde{R}_0\eta+\left(\tilde{B}+\tilde{H}L\right)^\top\Phi\right]\right]\check{m}\\
	&\quad\ +\left.\left[\tilde{H}\eta-\left(\tilde{B}+\tilde{H}L\right)\left(\tilde{R}_1+L^\top\tilde{R}_0L\right)^{-1}\left[L^\top\tilde{R}_0\zeta
    +\left(\tilde{B}+\tilde{H}L\right)^\top\Psi\right]\right]\check{x}_0\right\}dt+\sigma dW_i,\\
	d\check{x}^{(N)}&=\left\{\left[\tilde{A}+\tilde{F}-\left(\tilde{B}+\tilde{H}L\right)\left(\tilde{R}_1+L^\top\tilde{R}_0L\right)^{-1}\left(\tilde{B}+\tilde{H}L\right)^\top\Sigma\right]\check{x}^{(N)}+\left[\tilde{H}\eta\right.\right.\\
	&\quad\ -\left.\left(\tilde{B}+\tilde{H}L\right)\left(\tilde{R}_1+L^\top\tilde{R}_0L\right)^{-1}\left[L^\top\tilde{R}_0\eta+\left(\tilde{B}+\tilde{H}L\right)^\top\Phi\right]\right]\check{m}+\left[\tilde{H}\zeta\right.\\
	&\quad\ \left.\left.-\left(\tilde{B}+\tilde{H}L\right)\left(\tilde{R}_1+L^\top\tilde{R}_0L\right)^{-1}\left[L^\top\tilde{R}_0\zeta+\left(\tilde{B}+\tilde{H}L\right)^\top\Psi\right] \right]\check{x}_0\right\}dt
    +\frac{\sigma}{N}\sum_{i=1}^{N}dW_i,\\
	d\check{m}&=\left\{\left[\tilde{A}+\tilde{F}+\tilde{H}\eta-\left(\tilde{B}+\tilde{H}L\right)\left(\tilde{R}_1+L^\top\tilde{R}_0L\right)^{-1}\left[L^\top\tilde{R}_0\eta
    +\left(\tilde{B}+\tilde{H}L\right)^\top\Delta\right]\right]\check{m}\right.\\
	&\quad\ +\left.\left[\tilde{H}\zeta-\left(\tilde{B}+\tilde{H}L\right)\left(\tilde{R}_1+L^\top\tilde{R}_0L\right)^{-1}\left[L^\top\tilde{R}_0\zeta
    +\left(\tilde{B}+\tilde{H}L\right)^\top\Theta\right]\right]\check{x}_0\right\}dt,\\
	d\check{x}_0&=\left\{\left[A+\gamma^{-2}ER_2^{-1}E^\top P_1+B\zeta-(H+BL)\left(\tilde{R}_1+L^\top\tilde{R}_0L\right)^{-1}\right.\right.\\
	&\quad\ \left.\times\left[L^\top\tilde{R}_0\zeta+\left(\tilde{B}+\tilde{H}L\right)^\top\Psi\right]\right]\check{x}_0+\left[F+\gamma^{-2}ER_2^{-1}E^\top \Pi_1+B\eta\right.\\
    &\quad\ -\left.\left.(H+BL)\left(\tilde{R}_1+L^\top\tilde{R}_0L\right)^{-1}\left[L^\top\tilde{R}_0\eta+\left(\tilde{B}+\tilde{H}L\right)^\top\Delta\right]\right]\check{m}\right\}dt\\
	&\quad\ +\left\{\left[C+D\zeta-DL\left(\tilde{R}_1+L^\top\tilde{R}_0L\right)^{-1}\left[L^\top\tilde{R}_0\zeta+\left(\tilde{B}+\tilde{H}L\right)^\top\Psi\right]\right]\check{x}_0\right.\\
	&\quad\ +\left.\left[O+D\eta-DL\left(\tilde{R}_1+L^\top\tilde{R}_0L\right)^{-1}\left[L^\top\tilde{R}_0\eta+\left(\tilde{B}+\tilde{H}L\right)^\top\Delta\right]\right]\check{m}\right\}dW_0,\\
	\check{x}_i(0)&=x,\quad\check{x}^{(N)}(0)=x,\quad\check{m}(0)=x,\quad \check{x}_0(0)=\xi.
\end{aligned}\right.\vspace{-1mm}
\end{equation}
Moreover, we have
\begin{equation}\label{cost1}
\begin{aligned}
	&\mathcal{J}_i(u_{0i}\left(\check{u}_{1i}^+)(\cdot),\check{u}_{1i}^+(\cdot)\right)
	=\mathbb{E}\biggl\{\int_0^T\left[\left\langle\tilde{Q}\check{x}_i,\check{x}_i\right\rangle +\left\langle\tilde{\Gamma}_1^\top\tilde{Q}\tilde{\Gamma}_1\check{x}^{(N)},\check{x}^{(N)}\right\rangle +\left\langle\eta^\top\tilde{R}_0\eta\check{m},\check{m}\right\rangle \right.\\
	&\quad +\left\langle\zeta^\top\tilde{R}_0\zeta \check{x}_0,\check{x}_0\right\rangle-2\left\langle\tilde{Q}\tilde{\Gamma}_1\check{x}^{(N)},\check{x}_i\right\rangle
	+2\left\langle\eta^\top\tilde{R}_0\zeta\check{x}_0,\check{m}\right\rangle +\left\langle\left(\tilde{R}_1+L^\top\tilde{R}_0L\right)\check{u}_{1i}^+,\check{u}_{1i}^+\right\rangle\\
	&\quad +\left.2\left\langle L^\top\tilde{R}_0\zeta\check{x}_0+L^\top\tilde{R}_0\eta\check{m},\check{u}_{1i}^+\right\rangle\right]dt +\left\langle\tilde{G}\check{x}_i(T),\check{x}_i(T)\right\rangle\\
	&\quad -2\left\langle\tilde{G}\tilde{\Gamma}_2\check{x}^{(N)}(T),\check{x}_i(T)\right\rangle +\left\langle\tilde{\Gamma}_2^\top\tilde{G}\tilde{\Gamma}_2\check{x}^{(N)}(T),\check{x}^{(N)}(T)\right\rangle\bigg\},
\end{aligned}
\end{equation}
where $u_{0i}(\check{u}_{1i}^+)(\cdot)=\Gamma_i^*(\cdot,\check{u}_{1i}^+,\check{x}_0,\check{m})$ is given by (\ref{approx incentive strategy}).

By the law of large numbers, it follows unequivocally that $\lim\limits_{N\to\infty}\check{x}^{(N)}(\cdot)=\check{m}(\cdot)$. Moreover, by (\ref{Hamiltioni}), we have $\lim\limits_{N\to\infty}\check{x}_i(\cdot)=x_i^+(\cdot)$, $\check{m}(\cdot)=m^+(\cdot)$ and $\check{x}_0(\cdot)=x_0^+(\cdot)$. Then $\lim\limits_{N\to\infty}\check{u}_{1i}^+(\cdot)=u_{1i}^+(\cdot)$ and $\bar{\check{u}}_1^+(\cdot)=\bar{u}_1^+(\cdot)$ given by (\ref{ui1+feedback}) and (\ref{u1+feedback}).
For the convenience of subsequent discussion, the equations satisfied by $(x_i^+(\cdot),m^+(\cdot),x_0^+(\cdot))$ are recalled:\vspace{-1mm}
\begin{equation}\label{asy lim-optimal state}\vspace{-4mm}
\left\{\begin{aligned}
	dx_i^+&=\left\{\left[\tilde{A}-\left(\tilde{B}+\tilde{H}L\right)\left(\tilde{R}_1+L^\top\tilde{R}_0L\right)^{-1}\left(\tilde{B}+\tilde{H}L\right)^\top\Sigma\right]x_i^+\right.\\
	&\quad\ +\left[\tilde{F}+\tilde{H}\eta-\left(\tilde{B}+\tilde{H}L\right)\left(\tilde{R}_1+L^\top\tilde{R}_0L\right)^{-1}\left[L^\top\tilde{R}_0\eta+\left(\tilde{B}+\tilde{H}L\right)^\top\Phi\right]\right]m^+\\
	&\quad\ +\left.\left[\tilde{H}\zeta-\left(\tilde{B}+\tilde{H}L\right)\left(\tilde{R}_1+L^\top\tilde{R}_0L\right)^{-1}\left[L^\top\tilde{R}_0\zeta+\left(\tilde{B}+\tilde{H}L\right)^\top\Psi\right]\right]x_0^
	+\right\}dt+\sigma dW_i,\\
	dm^+&=\left\{\left[\tilde{A}+\tilde{F}+\tilde{H}\eta-\left(\tilde{B}+\tilde{H}L\right)\left(\tilde{R}_1+L^\top\tilde{R}_0L\right)^{-1}\left[L^\top\tilde{R}_0\eta
	+\left(\tilde{B}+\tilde{H}L\right)^\top\Delta\right]\right]m^+\right.\\
	&\quad\ +\left.\left[\tilde{H}\zeta-\left(\tilde{B}+\tilde{H}L\right)\left(\tilde{R}_1+L^\top\tilde{R}_0L\right)^{-1}\left[L^\top\tilde{R}_0\zeta+\left(\tilde{B}+\tilde{H}L\right)^\top\Theta\right]\right]x_0^+\right\}dt,\\
	dx_0^+&=\left\{\left[A+\gamma^{-2}ER_2^{-1}E^\top P_1+B\zeta-(H+BL)\left(\tilde{R}_1+L^\top\tilde{R}_0L\right)^{-1}\right.\right.\\
	&\qquad \left.\times\left[L^\top\tilde{R}_0\zeta+\left(\tilde{B}+\tilde{H}L\right)^\top\Psi\right]\right]x_0^+ +\left[F+\gamma^{-2}ER_2^{-1}E^\top \Pi_1+B\eta\right.\\
	&\qquad -\left.\left.(H+BL)\left(\tilde{R}_1+L^\top\tilde{R}_0L\right)^{-1}\left[L^\top\tilde{R}_0\eta+\left(\tilde{B}+\tilde{H}L\right)^\top\Delta\right]\right]m^+\right\}dt\\
	&\quad +\left\{\left[C+D\zeta-DL\left(\tilde{R}_1+L^\top\tilde{R}_0L\right)^{-1}\left[L^\top\tilde{R}_0\zeta+\left(\tilde{B}+\tilde{H}L\right)^\top\Psi\right]\right]x_0^+\right.\\
	&\qquad +\left.\left[O+D\eta-DL\left(\tilde{R}_1+L^\top\tilde{R}_0L\right)^{-1}\left[L^\top\tilde{R}_0\eta+\left(\tilde{B}+\tilde{H}L\right)^\top\Delta\right]\right]m^+\right\}dW_0,\\
	x_i^+(0)&=x,\quad m^+(0)=x,\quad x_0^+(0)=\xi,
\end{aligned}\right.
\end{equation}
and
\begin{equation}\label{cost2}
\begin{aligned}
	&J_i(u_{0i}(u_{1i}^+)(\cdot),u_{1i}^+(\cdot))=\mathbb{E}\left\{\int_0^T\left[\left\langle\tilde{Q}x_i^+,x_i^+\right\rangle +\left\langle\left(\tilde{\Gamma}_1^\top\tilde{Q}\tilde{\Gamma}_1+\eta^\top\tilde{R}_0\eta\right)m^+,m^+\right\rangle\right.\right.\\
	&\quad +\left\langle\zeta^\top\tilde{R}_0\zeta x_0^+,x_0^+\right\rangle -2\left\langle\tilde{Q}\tilde{\Gamma}_1m^+,x_i^+\right\rangle +2\left\langle\eta^\top\tilde{R}_0\zeta x_0^+,m^+\right\rangle \\
	&\quad +\left.\left\langle\left(\tilde{R}_1+L^\top\tilde{R}_0L\right)u_{1i}^+,u_{1i}^+\right\rangle +2\left\langle L^\top\tilde{R}_0\zeta x_0^++L^\top\tilde{R}_0\eta m^+,u_{1i}^+\right\rangle\right]dt\\
	&\quad +\left\langle\tilde{G}x_i^+(T),x_i^+(T)\right\rangle -2\left\langle\tilde{G}\tilde{\Gamma}_2m^+(T),x_i^+(T)\right\rangle +\left\langle\tilde{\Gamma}_2^\top\tilde{G}\tilde{\Gamma}_2m^+(T),m^+(T)\right\rangle\bigg\}.
\end{aligned}
\end{equation}

\begin{mylem}\label{est lemma1}
Let \textbf{(A1)}-\textbf{(A4)} hold, and $\gamma>\hat{\gamma}$. It follows\vspace{-1mm}
\begin{equation}\label{est diff1}
\begin{aligned}
	&\sup_{0\leq t\leq T}\mathbb{E}\left\vert\check{x}^{(N)}(t)-m^+(t)\right\vert^2=O\left(\frac{1}{N}\right),\\
	&\sup_{0\leq t\leq T}\mathbb{E}\left\vert\check{x}_i(t)-x_i^+(t)\right\vert^2=O\left(\frac{1}{N}\right),
\end{aligned}
\end{equation}
\begin{equation}\label{est diff2}
	\mathbb{E}\int_0^T\left\vert\check{u}_{1i}^+(t)-u_{1i}^+(t)\right\vert^2dt=O\left(\frac{1}{N}\right).
\end{equation}
\end{mylem}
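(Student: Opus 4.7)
The plan is to carry out four tightly linked steps, built around the structural observation that the $(\check{m},\check{x}_0)$--dynamics in (\ref{asy optimal state}) coincide exactly (equation and initial data) with the $(m^+,x_0^+)$--dynamics in (\ref{asy lim-optimal state}). Since these form a closed 2-dimensional linear coupled forward system with Lipschitz coefficients, pathwise uniqueness yields $\check{m}(\cdot)\equiv m^+(\cdot)$ and $\check{x}_0(\cdot)\equiv x_0^+(\cdot)$ a.s. I would state and verify this cancellation first, as the remaining three bounds all hinge on it.

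For the first estimate in (\ref{est diff1}), I would subtract the $m^+$--equation from the averaged $\check{x}^{(N)}$--equation. Denote by $\mathcal{A}_1,\mathcal{A}_2,\mathcal{A}_3$ the coefficients of $\check{x}^{(N)},\check{m},\check{x}_0$ in the drift of the $\check{x}^{(N)}$--equation. Invoking the identities $\Delta^*=\Sigma^*+\Phi^*$ and $\Theta^*=\Psi^*$ from the Remark after Theorem \ref{feedback}, a direct algebraic check shows that the drift of the $m^+$--equation equals $(\mathcal{A}_1+\mathcal{A}_2)m^+ + \mathcal{A}_3 x_0^+$. Combined with $\check{m}=m^+$ and $\check{x}_0=x_0^+$, this forces the difference to solve the closed linear SDE
\begin{equation*}
	d\bigl(\check{x}^{(N)}-m^+\bigr)=\mathcal{A}_1\bigl(\check{x}^{(N)}-m^+\bigr)dt+\frac{\sigma}{N}\sum_{i=1}^{N}dW_i(t),\quad \check{x}^{(N)}(0)-m^+(0)=0.
\end{equation*}
Applying It\^{o}'s formula to $|\check{x}^{(N)}-m^+|^2$ and using the independence of $\{W_i\}_{i=1}^N$ (so that the quadratic-variation contribution of the noise term is $|\sigma|^2/N$), a Gronwall argument together with the $L^\infty$-boundedness of $\mathcal{A}_1$ delivers $\sup_{t\in[0,T]}\mathbb{E}|\check{x}^{(N)}(t)-m^+(t)|^2=O(1/N)$.

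For the second estimate, subtracting the $x_i^+$--equation from the $\check{x}_i$--equation and again using $\check{m}=m^+$, $\check{x}_0=x_0^+$, the common Brownian terms cancel, while the $\tilde{F}\check{x}^{(N)}$--forcing in the $\check{x}_i$--equation combines with the coefficient on $\check{m}$ (which differs from the coefficient on $m^+$ in the $x_i^+$--equation by exactly $\tilde{F}$) to yield
\begin{equation*}
	d\bigl(\check{x}_i-x_i^+\bigr)=\widetilde{\mathcal{A}}\bigl(\check{x}_i-x_i^+\bigr)dt+\tilde{F}\bigl(\check{x}^{(N)}-m^+\bigr)dt,\quad \check{x}_i(0)-x_i^+(0)=0,
\end{equation*}
with $\widetilde{\mathcal{A}}:=\tilde{A}-(\tilde{B}+\tilde{H}L^*)(\tilde{R}_1+L^{*\top}\tilde{R}_0L^*)^{-1}(\tilde{B}+\tilde{H}L^*)^\top\Sigma^*$. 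Since $\widetilde{\mathcal{A}}$ and $\tilde{F}$ are bounded on $[0,T]$, Gronwall together with the first estimate yields the $O(1/N)$ bound. For (\ref{est diff2}), comparing the explicit feedback formulas for $\check{u}_{1i}^+$ and (\ref{ui1+feedback}) for $u_{1i}^+$, the contributions involving $\check{x}_0-x_0^+$ and $\check{m}-m^+$ vanish, leaving $\check{u}_{1i}^+-u_{1i}^+=-(\tilde{R}_1+L^{*\top}\tilde{R}_0L^*)^{-1}(\tilde{B}+\tilde{H}L^*)^\top\Sigma^*(\check{x}_i-x_i^+)$, so (\ref{est diff2}) follows by squaring, integrating and invoking the second estimate.

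The only delicate point is the coefficient-matching in step~1: without the identities $\Delta^*=\Sigma^*+\Phi^*$ and $\Theta^*=\Psi^*$, the difference $\check{x}^{(N)}-m^+$ would inherit nonvanishing forcing from $(\check{m}-m^+)$ and $(\check{x}_0-x_0^+)$, and the self-consistent cascade that reduces everything to a single Brownian noise of magnitude $O(1/\sqrt{N})$ would break down. Everything else is a straightforward application of linear SDE moment estimates and Gronwall's inequality under the boundedness of coefficients supplied by \textbf{(A1)}--\textbf{(A4)} and the standing assumption $\gamma>\hat{\gamma}$, which guarantees that $(P_1,P_2,\Pi_1,\Pi_2)$ and $(\Sigma^*,\Phi^*,\Psi^*,\Delta^*,\Theta^*)$ are well-defined and bounded on $[0,T]$.
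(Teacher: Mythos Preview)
Your proposal is correct and follows essentially the same approach as the paper: both derive the closed linear difference SDEs for $\check{x}^{(N)}-m^+$ and $\check{x}_i-x_i^+$, apply standard linear SDE moment estimates/Gronwall, and then read off (\ref{est diff2}) from the feedback formula. The paper writes down these difference equations directly without comment; you make explicit the two ingredients that justify their simple form---the pathwise identification $(\check{m},\check{x}_0)\equiv(m^+,x_0^+)$ by uniqueness, and the coefficient matching via $\Delta^*=\Sigma^*+\Phi^*$, $\Theta^*=\Psi^*$---which is a welcome clarification but not a different method.
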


\begin{proof}
From (\ref{asy optimal state}) and (\ref{asy lim-optimal state}), the difference $\check{x}_i(\cdot)-x_i^+(\cdot)$ satisfies
\begin{equation}
\left\{\begin{aligned}
	d(\check{x}_i-x_i^+)&=\left\{\left[\tilde{A}-\left(\tilde{B}+\tilde{H}L\right)\left(\tilde{R}_1+L^\top\tilde{R}_0L\right)^{-1}\left(\tilde{B}+\tilde{H}L\right)^\top\Sigma\right](\check{x}_i-x_i^+)\right.\\
    &\qquad +\tilde{F}\left(\check{x}^{(N)}-m^+\right)\bigg\}dt,\\
	d(\check{x}^{(N)}-m^+)&=\left\{\left[\tilde{A}+\tilde{F}-\left(\tilde{B}+\tilde{H}L\right)\left(\tilde{R}_1+L^\top\tilde{R}_0L\right)^{-1}\left(\tilde{B}+\tilde{H}L\right)^\top\Sigma\right]\right.\\
	&\qquad \times\left(\check{x}^{(N)}-m^+\right)\bigg\}dt+\frac{\sigma}{N}\sum_{i=1}^{N}dW_i,\\
	\check{x}_i(0)-x_i^+(0)&=0,\quad \check{x}^{(N)}(0)-m^+(0)=0.\\
\end{aligned}\right.
\end{equation}
By linear SDEs' estimates, we can obtain (\ref{est diff1}).
Moreover,
\begin{equation*}
	\check{u}_{1i}^+(\cdot)-u_{1i}^+(\cdot)=-\left(\tilde{R}_1+L^{\top}\tilde{R}_0L\right)^{-1}\left(\tilde{B}+\tilde{H}L\right)^\top\Sigma\left(\check{x}_i(\cdot)-x_i^+(\cdot)\right).
\end{equation*}
Consequently, by (\ref{est diff1}), we have (\ref{est diff2}).
\end{proof}

\begin{mylem}\label{est lemma2}
Let \textbf{(A1)}-\textbf{(A4)} hold, and $\gamma>\hat{\gamma}$. Then
\begin{equation}
	\big\vert\mathcal{J}_i(u_{0i}(\check{u}_{1i}^+)(\cdot),\check{u}_{1i}^+(\cdot))-J_i(u_{0i}(u_{1i}^+)(\cdot),u_{1i}^+(\cdot))\big\vert=O\left(\frac{1}{\sqrt{N}}\right).
\end{equation}
\end{mylem}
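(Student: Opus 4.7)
The plan is to expand the difference $\mathcal{J}_i(u_{0i}(\check{u}_{1i}^+)(\cdot),\check{u}_{1i}^+(\cdot)) - J_i(u_{0i}(u_{1i}^+)(\cdot),u_{1i}^+(\cdot))$ by lining up the corresponding quadratic/bilinear summands in (\ref{cost1}) and (\ref{cost2}), observing that the two functionals have exactly the same structural form but with processes $(\check{x}_i,\check{x}^{(N)},\check{m},\check{x}_0,\check{u}_{1i}^+)$ replaced by $(x_i^+,m^+,m^+,x_0^+,u_{1i}^+)$. A first useful reduction is that the equations for $(\check{m}(\cdot),\check{x}_0(\cdot))$ in (\ref{asy optimal state}) and those for $(m^+(\cdot),x_0^+(\cdot))$ in (\ref{asy lim-optimal state}) are identical (same drift, same diffusion and same initial data), as noted in the paragraph preceding Lemma~\ref{est lemma1}; hence $\check{m}(\cdot)\equiv m^+(\cdot)$ and $\check{x}_0(\cdot)\equiv x_0^+(\cdot)$ pathwise. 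Consequently the only discrepancies come from three pairs: $(\check{x}_i,x_i^+)$, $(\check{x}^{(N)},m^+)$ and $(\check{u}_{1i}^+,u_{1i}^+)$.

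Next I would write each difference of quadratic terms via the polarization-type identity
\begin{equation*}
\langle Ma,a\rangle-\langle Mb,b\rangle=\langle M(a-b),a+b\rangle,\qquad \langle Ma,c\rangle-\langle Mb,c\rangle=\langle M(a-b),c\rangle,
\end{equation*}
so that every summand in $\mathcal{J}_i-J_i$ becomes an inner product in which one factor is an \emph{increment} ($\check{x}_i-x_i^+$, $\check{x}^{(N)}-m^+$, or $\check{u}_{1i}^+-u_{1i}^+$) and the other factor is a sum/other process that is uniformly bounded in $L^2$. Using the Cauchy--Schwarz inequality in expectation,
\begin{equation*}
\bigl|\mathbb{E}\langle M\alpha,\beta\rangle\bigr|\leq \|M\|_\infty\bigl(\mathbb{E}|\alpha|^2\bigr)^{1/2}\bigl(\mathbb{E}|\beta|^2\bigr)^{1/2},
\end{equation*}
together with the essential boundedness of $\tilde{Q},\tilde{R}_0,\tilde{R}_1,\tilde{G},\tilde{\Gamma}_1,\tilde{\Gamma}_2,L^*,\zeta^*,\eta^*$ coming from \textbf{(A1)}--\textbf{(A4)} and the solvability of the Riccati and CC-incentive systems, will reduce the whole estimate to square roots of the three $L^2$-increments furnished by Lemma~\ref{est lemma1}. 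The terminal terms at $t=T$ are handled in exactly the same way using $\sup_{0\le t\le T}\mathbb{E}|\cdot|^2$.

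For the a priori $L^2$ boundedness of the ``sum'' factors $(\check{x}_i+x_i^+)$, $(\check{x}^{(N)}+m^+)$, $(\check{u}_{1i}^++u_{1i}^+)$, as well as $\check{x}_0=x_0^+$ and $\check{m}=m^+$, I would invoke standard linear SDE estimates on (\ref{asy optimal state})--(\ref{asy lim-optimal state}), which give $\sup_{0\leq t\leq T}\mathbb{E}[|\check{x}_i(t)|^2+|x_i^+(t)|^2+|\check{x}^{(N)}(t)|^2+|m^+(t)|^2+|\check{x}_0(t)|^2]\leq C$ and $\mathbb{E}\int_0^T(|\check{u}_{1i}^+(t)|^2+|u_{1i}^+(t)|^2)dt\leq C$, with $C$ independent of $N$. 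Combining these bounds with Lemma~\ref{est lemma1} gives
\begin{equation*}
\bigl|\mathcal{J}_i(u_{0i}(\check{u}_{1i}^+),\check{u}_{1i}^+)-J_i(u_{0i}(u_{1i}^+),u_{1i}^+)\bigr|\leq C\!\left[\left(\mathbb{E}\!\int_0^T|\check{x}_i-x_i^+|^2dt\right)^{\!\!1/2}\!\!+\!\left(\mathbb{E}\!\int_0^T|\check{x}^{(N)}\!-m^+|^2dt\right)^{\!\!1/2}\!\!+\!\left(\mathbb{E}\!\int_0^T|\check{u}_{1i}^+\!-u_{1i}^+|^2dt\right)^{\!\!1/2}\right]=O\!\left(\tfrac{1}{\sqrt N}\right),
\end{equation*}
which is the desired estimate.

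The main obstacle will be the purely bookkeeping task of pairing up all the quadratic and cross terms in (\ref{cost1}) with their counterparts in (\ref{cost2}) without losing any contribution: in particular, the cross terms $\langle\tilde{Q}\tilde{\Gamma}_1\check{x}^{(N)},\check{x}_i\rangle$ versus $\langle\tilde{Q}\tilde{\Gamma}_1m^+,x_i^+\rangle$, and $\langle L^{\top}\tilde{R}_0\eta\check{m},\check{u}_{1i}^+\rangle$ versus $\langle L^{\top}\tilde{R}_0\eta m^+,u_{1i}^+\rangle$, must be split as ``fix one factor, vary the other'' so that each resulting inner product sees exactly one increment from Lemma~\ref{est lemma1}. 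Once this decomposition is done carefully and Cauchy--Schwarz is applied termwise, no further analytic difficulty arises and the rate $N^{-1/2}$ is obtained as the square root of the $N^{-1}$ rate in Lemma~\ref{est lemma1}.
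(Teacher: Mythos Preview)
Your proposal is correct and follows essentially the same approach as the paper: identify $\check{m}\equiv m^+$, $\check{x}_0\equiv x_0^+$, expand the difference of the two cost functionals term by term via a polarization identity so that each summand contains one of the increments $\check{x}_i-x_i^+$, $\check{x}^{(N)}-m^+$, $\check{u}_{1i}^+-u_{1i}^+$, and then apply Cauchy--Schwarz together with the $L^2$ boundedness of the remaining factors and the $O(1/N)$ estimates of Lemma~\ref{est lemma1}. The only cosmetic difference is that the paper writes $\langle Ma,a\rangle-\langle Mb,b\rangle=\langle M(a-b),a-b\rangle+2\langle Mb,a-b\rangle$ instead of your $\langle M(a-b),a+b\rangle$, which is of course equivalent.
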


\begin{proof}
By (\ref{cost1}) and (\ref{cost2}), the difference $\mathcal{J}_i(u_{0i}(\check{u}_{1i}^+)(\cdot),\check{u}_{1i}^+(\cdot))-J_i(u_{0i}(u_{1i}^+)(\cdot),u_{1i}^+(\cdot))$ can be rewritten as
\begin{equation}
\begin{aligned}
	&\mathcal{J}_i(u_{0i}(\check{u}_{1i}^+)(\cdot),\check{u}_{1i}^+(\cdot))-J_i(u_{0i}(u_{1i}^+)(\cdot),u_{1i}^+(\cdot))\\
    &=\mathbb{E}\bigg\{\int_0^T\left[\left\langle\tilde{Q}(\check{x}_i-x_i^+),\check{x}_i-x_i^+\right\rangle +2\left\langle\tilde{Q}x_i^+,\check{x}_i-x_i^+\right\rangle +\left\langle\tilde{\Gamma}_1^\top\tilde{Q}\tilde{\Gamma}_1\left(\check{x}^{(N)}-m^+\right),\check{x}^{(N)}-m^+\right\rangle\right.\\
    &\qquad\quad +2\left\langle\tilde{\Gamma}_1^\top\tilde{Q}\tilde{\Gamma}_1m^+,\check{x}^{(N)}-m^+\right\rangle -2\left\langle\tilde{Q}\tilde{\Gamma}_1(\check{x}^{(N)}-m^+),\check{x}_i\right\rangle -2\left\langle\tilde{Q}\tilde{\Gamma}_1m^+,\check{x}_i-x_i^+\right\rangle\\
	&\qquad\quad +\left\langle\left(\tilde{R}_1+L^\top\tilde{R}_0L\right)(\check{u}_{1i}^+-u_{1i}^+),\check{u}_{1i}^+-u_{1i}^+\right\rangle
    +2\left\langle\left(\tilde{R}_1+L^\top\tilde{R}_0L\right)u_{1i}^+,\check{u}_{1i}^+-u_{1i}^+\right\rangle\\
	&\qquad\quad +\left.2\left\langle L^\top\tilde{R}_0\zeta x_0^++L^\top\tilde{R}_0\eta m^+,\check{u}_{1i}^+-u_{1i}^+\right\rangle\right]dt
    +\left\langle\tilde{G}\left(\check{x}_i(T)-x_i^+(T)\right),\check{x}_i(T)-x_i^+(T)\right\rangle\\
	&\qquad +2\left\langle\tilde{G}x_i^+(T),\check{x}_i(T)-x_i^+(T)\right\rangle +\left\langle\tilde{\Gamma}_2^\top\tilde{G}\tilde{\Gamma}_2\left(\check{x}^{(N)}(T)-m^+(T)\right),\check{x}^{(N)}(T)-m^+(T)\right\rangle\\
	&\qquad +2\left\langle\tilde{\Gamma}_2^\top\tilde{G}\tilde{\Gamma}_2m^+(T),\check{x}^{(N)}(T)-m^+(T)\right\rangle -2\left\langle\tilde{G}\tilde{\Gamma}_2\left(\check{x}^{(N)}(T)-m^+(T)\right),\check{x}_i(T)\right\rangle\\
	&\qquad -2\left\langle\tilde{G}\tilde{\Gamma}_2m^+(T),\check{x}_i(T)-x_i^+(T)\right\rangle\bigg\}.
\end{aligned}
\end{equation}
Noting the fact that $\mathbb{E}\int_0^T\big[\vert x_i^+(t)\vert^2+\vert m^+(t)\vert^2\big]dt<\infty$, $\mathbb{E}\int_0^T\vert\check{x}_i(t)\vert^2dt<\infty$ and from (\ref{est diff1}), it holds
\begin{equation}
	\big\vert\mathcal{J}_i(u_{0i}(\check{u}_{1i}^+)(\cdot),\check{u}_{1i}^+(\cdot))-J_i(u_{0i}(u_{1i}^+)(\cdot),u_{1i}^+(\cdot))\big\vert\leq\frac{C}{\sqrt{N}}.
\end{equation}
The lemma follows.
\end{proof}

For any $u_{1i}(\cdot)\in\mathcal{U}_{ic}$, consider the set of strategies $(u_{1i}(\cdot),\check{u}_{-1i}^+(\cdot))$. Obviously,
\begin{equation*}
	\lim\limits_{N\to\infty}\check{u}_{-1}^{(N)}(\cdot):=\lim\limits_{N\to\infty}\frac{1}{N}\left(\sum_{j\neq i}^{N}\check{u}_{1j}^+(\cdot)+u_{1i}(\cdot)\right)=\bar{\check{u}}_1^+(\cdot)=\bar{u}_1^+(\cdot).
\end{equation*}
The corresponding state processes are
\begin{equation}\label{asy state}\hspace{-2mm}
\left\{\begin{aligned}
	dx_i&=\left[\tilde{A}x_i+\tilde{F}x^{(N)}+\left(\tilde{B}+\tilde{H}L\right)u_{1i}+\tilde{H}\zeta x_0+\tilde{H}\eta m\right]dt+\sigma dW_i,\\
	dx_j&=\left[\tilde{A}x_j+\tilde{F}x^{(N)}+\left(\tilde{B}+\tilde{H}L\right)\check{u}_{1j}^++\tilde{H}\zeta x_0+\tilde{H}\eta m\right]dt+\sigma dW_j,\\
	dx^{(N)}&=\left[\left(\tilde{A}+\tilde{F}\right)x^{(N)}+\left(\tilde{B}+\tilde{H}L\right)\check{u}_1^{(N)}+\frac{1}{N}\left(\tilde{B}+\tilde{H}L\right)(u_{1i}-\check{u}_{1i}^+)\right.\\
	&\qquad +\tilde{H}\zeta x_0+\tilde{H}\eta m\bigg]dt+\frac{\sigma}{N}\sum_{i=1}^{N}dW_i,\\
	dm&=\left[\left(\tilde{A}+\tilde{F}+\tilde{H}\eta\right)m+\left(\tilde{B}+\tilde{H}L\right)\bar{u}_1^++\tilde{H}\zeta x_0\right]dt,\\
	dx_0&=\left[\left(A+\gamma^{-2}ER_2^{-1}E^\top P_1+B\zeta\right)x_0+\left(F+\gamma^{-2}ER_2^{-1}E^\top \Pi_1+B\eta\right)m\right.\\
    &\qquad +(H+BL)\bar{u}_1^+\Big]dt+\left[(C+D\zeta)x_0+(O+D\eta) m+DL\bar{u}_1^+\right]dW_0,\\
	x_i(0)&=x,\quad x_j(0)=x,\quad x^{(N)}(0)=x,\quad m(0)=x,\quad x_0(0)=\xi,\quad 1\leq j\leq N,\quad j\neq i,
\end{aligned}\right.
\end{equation}
and the cost functional is
\begin{equation}
\begin{aligned}
	&\mathcal{J}_i\left(u_{0i}(u_{1i})(\cdot),u_{-0i}(\check{u}_{-1i}^+)(\cdot),u_{1i}(\cdot),\check{u}_{-1i}^+(\cdot)\right)\\
	&=\mathbb{E}\bigg\{\int_0^T\left[\left\langle\tilde{Q}x_i,x_i\right\rangle +\left\langle\tilde{\Gamma}_1^\top\tilde{Q}\tilde{\Gamma}_1x^{(N)},x^{(N)}\right\rangle
    +\left\langle\eta^\top\tilde{R}_0\eta m,m\right\rangle +\left\langle\zeta^\top\tilde{R}_0\zeta x_0,x_0\right\rangle\right.\\
	&\qquad\quad -2\left\langle\tilde{Q}\tilde{\Gamma}_1x^{(N)},x_i\right\rangle +2\left\langle\eta^\top\tilde{R}_0\zeta x_0,m\right\rangle +\left\langle\left(\tilde{R}_1+L^\top\tilde{R}_0L\right)u_{1i},u_{1i}\right\rangle\\
	&\qquad\quad +\left.2\left\langle L^\top\tilde{R}_0\zeta x_0+L^\top\tilde{R}_0\eta m,u_{1i}\right\rangle\right]dt +\left\langle\tilde{G}x_i(T),x_i(T)\right\rangle\\
	&\qquad\quad -2\left\langle\tilde{G}\tilde{\Gamma}_2x^{(N)}(T),x_i(T)\right\rangle +\left\langle\tilde{\Gamma}_2^\top\tilde{G}\tilde{\Gamma}_2x^{(N)}(T),x^{(N)}(T)\right\rangle\bigg\}.
\end{aligned}
\end{equation}
Here, since $\bar{\check{u}}_1^+(\cdot)=\bar{u}_1^+(\cdot)$, thus $m(\cdot)=m^+(\cdot)$ and $x_0(\cdot)=x_0^+(\cdot)$.
When $N$ tends to infinity, $\check{u}_{1j}^+(\cdot)\to u_{1j}^+(\cdot)$, and the corresponding limiting system is as follows:
\begin{equation}\label{asy lim state}
\left\{\begin{aligned}
	d\bar{x}_i&=\left[\tilde{A}\bar{x}_i+\left(\tilde{F}+\tilde{H}\eta\right)m+\left(\tilde{B}+\tilde{H}L\right)u_{1i}+\tilde{H}\zeta x_0\right]dt+\sigma dW_i,\\
	d\bar{x}_j&=\left[\tilde{A}\bar{x}_j+\left(\tilde{F}+\tilde{H}\eta\right)m+\left(\tilde{B}+\tilde{H}L\right)u_{1j}^++\tilde{H}\zeta x_0\right]dt+\sigma dW_j,\\
	dm&=\left[\left(\tilde{A}+\tilde{F}+\tilde{H}\eta\right)m+\left(\tilde{B}+\tilde{H}L\right)\bar{u}_1^++\tilde{H}\zeta x_0\right]dt,\\
	dx_0&=\left[\left(A+\gamma^{-2}ER_2^{-1}E^\top P_1+B\zeta\right)x_0+\left(F+\gamma^{-2}ER_2^{-1}E^\top \Pi_1+B\eta\right)m\right.\\
	&\qquad +(H+BL)\bar{u}_1^+\Big]dt +\left[(C+D\zeta)x_0+(O+D\eta) m+DL\bar{u}_1^+\right]dW_0,\\
	\bar{x}_i(0)&=x,\quad \bar{x}_j(0)=x,\quad m(0)=x,\quad x_0(0)=\xi,\quad 1\leq j\leq N,\quad j\neq i,
\end{aligned}\right.
\end{equation}
with
\begin{equation}
\begin{aligned}
	&J_i\left(u_{0i}(u_{1i})(\cdot),u_{-0i}(u_{-1i}^+)(\cdot),u_{1i}(\cdot),u_{-1i}^+(\cdot)\right)\\
	&=\mathbb{E}\bigg\{\int_0^T\left[\left\langle\tilde{Q}\bar{x}_i,\bar{x}_i\right\rangle +\left\langle\left(\eta^\top\tilde{R}_0\eta+\tilde{\Gamma}_1^\top\tilde{Q}\tilde{\Gamma}_1\right)m,m\right\rangle
    +\left\langle\zeta^\top\tilde{R}_0\zeta x_0,x_0\right\rangle\right.\\
	&\qquad\quad -2\left\langle\tilde{Q}\tilde{\Gamma}_1m,\bar{x}_i\right\rangle +2\left\langle\eta^\top\tilde{R}_0\zeta x_0,m\right\rangle +\left\langle\left(\tilde{R}_1+L^\top\tilde{R}_0L\right)u_{1i},u_{1i}\right\rangle\\
	&\qquad\quad +\left.2\left\langle L^\top\tilde{R}_0\zeta x_0+L^\top\tilde{R}_0\eta m,u_{1i}\right\rangle\right]dt +\left\langle\tilde{G}\bar{x}_i(T),\bar{x}_i(T)\right\rangle\\
	&\qquad\quad -2\left\langle\tilde{G}\tilde{\Gamma}_2m(T),\bar{x}_i(T)\right\rangle +\left\langle\tilde{\Gamma}_2^\top\tilde{G}\tilde{\Gamma}_2m(T),m(T)\right\rangle\bigg\}.
\end{aligned}
\end{equation}

\begin{mylem}
Let \textbf{(A1)}-\textbf{(A4)} hold, and $\gamma>\hat{\gamma}$. Then the following estimations hold
\begin{equation}\label{est diff3}
\begin{aligned}
	&\sup_{0\leq t\leq T}\mathbb{E}\left\vert x^{(N)}(t)-m(t)\right\vert^2=O\left(\frac{1}{N}\right),\\
	&\sup_{0\leq t\leq T}\mathbb{E}\left\vert x_i(t)-\bar{x}_i(t)\right\vert^2=O\left(\frac{1}{N}\right),\\
	&\sup_{0\leq t\leq T}\mathbb{E}\left\vert x_j(t)-\bar{x}_j(t)\right\vert^2=O\left(\frac{1}{N}\right),\quad 1\leq j\leq N,\quad j\neq i.
\end{aligned}
\end{equation}
\end{mylem}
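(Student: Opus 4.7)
The plan is to exploit the fact that the mean-field trajectory $m(\cdot)$ and the leader's state $x_0(\cdot)$ coincide in the $N$-agent system \eqref{asy state} and in its limit \eqref{asy lim state}. Indeed, both pairs of equations are driven by $\bar{u}_1^+(\cdot)$ with the same initial data, and moreover coincide with the analogous processes in \eqref{asy optimal state} and \eqref{asy lim-optimal state}, so $m(\cdot)=m^+(\cdot)=\check{m}(\cdot)$ and $x_0(\cdot)=x_0^+(\cdot)=\check{x}_0(\cdot)$. All three estimates therefore reduce to linear SDE/ODE comparisons.

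For the first estimate, subtracting the ODE for $m$ in \eqref{asy state} from the SDE for $x^{(N)}$ yields
\begin{equation*}
\begin{aligned}
d\bigl(x^{(N)}-m\bigr) &= \bigl[(\tilde{A}+\tilde{F})(x^{(N)}-m) + (\tilde{B}+\tilde{H}L)(\check{u}_1^{(N)}-\bar{u}_1^+)\\
&\quad + \tfrac{1}{N}(\tilde{B}+\tilde{H}L)(u_{1i}-\check{u}_{1i}^+)\bigr]\,dt + \tfrac{\sigma}{N}\sum_{k=1}^N dW_k,
\end{aligned}
\end{equation*}
with zero initial data. Substituting the feedback representations \eqref{ui1+feedback} and \eqref{u1+feedback} and invoking the identities $\Theta=\Psi$ and $\Delta=\Sigma+\Phi$ from the Remark after Theorem \ref{feedback} shows that the $\check{x}_0$ and $\check{m}$ contributions cancel, leaving exactly
\begin{equation*}
\check{u}_1^{(N)}(t)-\bar{u}_1^+(t) = -\bigl(\tilde{R}_1+L^\top\tilde{R}_0L\bigr)^{-1}(\tilde{B}+\tilde{H}L)^\top\Sigma(t)\bigl(\check{x}^{(N)}(t)-\check{m}(t)\bigr),
\end{equation*}
whose $L^2$-norm is $O(N^{-1/2})$ by Lemma \ref{est lemma1}. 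Together with $\mathbb{E}|\tfrac{1}{N}(u_{1i}-\check{u}_{1i}^+)|^2=O(N^{-2})$ and the independence of $\{W_k\}_{k=1}^N$, which makes the quadratic variation of the common-noise term of order $N^{-1}$, a standard Gronwall argument gives $\sup_{0\le t\le T}\mathbb{E}|x^{(N)}(t)-m(t)|^2=O(N^{-1})$.

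For the second estimate, subtracting the equation for $\bar{x}_i$ in \eqref{asy lim state} from that of $x_i$ in \eqref{asy state} and using $m=\bar{m}$, $x_0=\bar{x}_0$ reduces, after cancellation of the $u_{1i}$, $\tilde{H}\eta m$ and $\tilde{H}\zeta x_0$ terms, to the deterministic-coefficient ODE
\begin{equation*}
d(x_i-\bar{x}_i) = \bigl[\tilde{A}(x_i-\bar{x}_i)+\tilde{F}(x^{(N)}-m)\bigr]\,dt,\qquad (x_i-\bar{x}_i)(0)=0,
\end{equation*}
so Gronwall and the first estimate give $O(N^{-1})$. The third estimate is structurally the same, the only new term being $(\tilde{B}+\tilde{H}L)(\check{u}_{1j}^+-u_{1j}^+)$. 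Since $\check{x}_0=x_0^+$ and $\check{m}=m^+$, the feedback form \eqref{ui1+feedback} gives $\check{u}_{1j}^+-u_{1j}^+ = -(\tilde{R}_1+L^\top\tilde{R}_0L)^{-1}(\tilde{B}+\tilde{H}L)^\top\Sigma(\check{x}_j-x_j^+)$, whose $L^2$-norm is again $O(N^{-1/2})$ by Lemma \ref{est lemma1}, and Gronwall closes the argument.

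The principal technical obstacle is the cancellation that converts $\check{u}_1^{(N)}-\bar{u}_1^+$ into a pure multiple of $\check{x}^{(N)}-\check{m}$. This is not immediate from the definitions of $\check{u}_{1j}^+$ and $\bar{u}_1^+$ and hinges crucially on the nonsymmetric Riccati identities $\Theta=\Psi$ and $\Delta=\Sigma+\Phi$; once these are exploited, the three estimates reduce to routine linear-SDE bounds with exchangeable Brownian noise.
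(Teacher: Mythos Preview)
Your proposal is correct and follows essentially the same route as the paper: subtract the systems \eqref{asy state} and \eqref{asy lim state}, reduce $\check{u}_1^{(N)}-\bar{u}_1^+$ to a multiple of $\check{x}^{(N)}-m^+$ (the paper states this identity directly, while you make explicit that it rests on $\Theta=\Psi$ and $\Delta=\Sigma+\Phi$), invoke Lemma~\ref{est lemma1} for the $O(N^{-1})$ input bounds, and close with Gronwall. The only minor slip is terminological: the term $\tfrac{\sigma}{N}\sum_k dW_k$ is averaged \emph{idiosyncratic} noise rather than ``common noise'', but your variance computation is correct.
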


\begin{proof}
Subtract (\ref{asy state}) from (\ref{asy lim state}), we have
\begin{equation}
\left\{\begin{aligned}
	d(x_i-\bar{x}_i)&=\left[\tilde{A}(x_i-\bar{x}_i)+\tilde{F}\left(x^{(N)}-m\right)\right]dt,\\
	d(x_j-\bar{x}_j)&=\left[\tilde{A}(x_j-\bar{x}_j)+\tilde{F}\left(x^{(N)}-m\right)+\left(\tilde{B}+\tilde{H}L\right)(\check{u}_{1j}^+-u_{1j}^+)\right]dt,\\
	d(x^{(N)}-m)&=\left[\left(\tilde{A}+\tilde{F}\right)\left(x^{(N)}-m\right)+\left(\tilde{B}+\tilde{H}L\right)\left(\check{u}_1^{(N)}-\bar{u}_1^+\right)\right.\\
	&\qquad +\left.\frac{1}{N}\left(\tilde{B}+\tilde{H}L\right)(u_{1i}-\check{u}_{1i}^+)\right]dt+\frac{\sigma}{N}\sum_{i=1}^{N}dW_i,\\
	x_i(0)-\bar{x}_i(0)&=0,\quad x_j(0)-\bar{x}_j(0)=0,\quad x^{(N)}(0)-m(0)=0.
\end{aligned}\right.
\end{equation}
Since
\begin{equation}
\left\{
\begin{aligned}
	\check{u}_1^{(N)}(\cdot)&=-\left(\tilde{R}_1+L^\top\tilde{R}_0L\right)^{-1}\left\{\left(\tilde{B}+\tilde{H}L\right)^\top\Sigma \check{x}^{(N)}+\left[L^\top\tilde{R}_0\zeta+\left(\tilde{B}+\tilde{H}L\right)^\top\Psi\right]x_0^+\right.\\
	&\qquad +\left.\left[L^\top\tilde{R}_0\eta+\left(\tilde{B}+\tilde{H}L\right)^\top\Phi\right] m^+\right\},\\
	\bar{u}_1^+(\cdot)&=-\left(\tilde{R}_1+L^\top\tilde{R}_0L\right)^{-1}\left\{\left[L^\top\tilde{R}_0\zeta+\left(\tilde{B}+\tilde{H}L\right)^\top\Theta\right]x_0^+\right.\\
    &\qquad +\left.\left[L^\top\tilde{R}_0\eta+\left(\tilde{B}+\tilde{H}L\right)^\top\Delta\right] m^+\right\},
\end{aligned}
\right.
\end{equation}
and the difference is
\begin{equation}
	\check{u}_1^{(N)}(\cdot)-\bar{u}_1^+(\cdot)=-\left(\tilde{R}_1+L^{\top}\tilde{R}_0L\right)^{-1}\left(\tilde{B}+\tilde{H}L\right)^\top\Sigma\left(\check{x}^{(N)}-m^+\right).
\end{equation}
Thus, by Lemma \ref{est lemma1} and the boundedness of the coefficients, we obtain
\begin{equation*}
	\mathbb{E}\int_0^T\left\vert\check{u}_1^{(N)}(t)-\bar{u}_1^+(t)\right\vert^2dt=O\left(\frac{1}{N}\right).
\end{equation*}
By (\ref{est diff2}) and linear SDEs' estimates, the lemma follows.
\end{proof}

\begin{mylem}\label{est lemma3}
Let \textbf{(A1)}-\textbf{(A4)} hold, and $\gamma>\hat{\gamma}$. It holds that
\begin{equation}
\begin{aligned}
	&\left\vert\mathcal{J}_i\left(u_{0i}(u_{1i})(\cdot),u_{-0i}(\check{u}_{-1i}^+)(\cdot),u_{1i}(\cdot),\check{u}_{-1i}^+(\cdot)\right)\right.\\
&\quad -\left.J_i\left(u_{0i}(u_{1i})(\cdot),u_{-0i}(u_{-1i}^+)(\cdot),u_{1i}(\cdot),u_{-1i}^+(\cdot)\right)\right\vert=O\left(\frac{1}{\sqrt{N}}\right).
\end{aligned}
\end{equation}
\end{mylem}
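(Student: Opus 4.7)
The plan is to mirror the approach used in Lemma \ref{est lemma2}, exploiting the key structural observation that $m(\cdot)=m^+(\cdot)$ and $x_0(\cdot)=x_0^+(\cdot)$ in both (\ref{asy state}) and (\ref{asy lim state}) since $\bar{\check u}_1^+(\cdot)=\bar u_1^+(\cdot)$. Consequently, in writing out $\mathcal{J}_i(u_{0i}(u_{1i}),u_{-0i}(\check u_{-1i}^+),u_{1i},\check u_{-1i}^+)-J_i(u_{0i}(u_{1i}),u_{-0i}(u_{-1i}^+),u_{1i},u_{-1i}^+)$, every term that depends only on $m$, $x_0$, or $u_{1i}$ (including the cross terms $\langle L^\top\tilde R_0\zeta x_0+L^\top\tilde R_0\eta m,u_{1i}\rangle$ and the quadratic $\langle(\tilde R_1+L^\top\tilde R_0L)u_{1i},u_{1i}\rangle$) cancels identically. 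What remains is a finite sum of quadratic expressions built from the discrepancies $x_i-\bar x_i$ and $x^{(N)}-m$.

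For each surviving term I would apply the polarization identities $\langle Ma,a\rangle-\langle Mb,b\rangle=\langle M(a-b),a-b\rangle+2\langle Mb,a-b\rangle$ and $\langle Ma,c\rangle-\langle Mb,c\rangle=\langle M(a-b),c\rangle$, expressing the difference entirely in terms of $x_i-\bar x_i$ and $x^{(N)}-m$. The ``square-of-difference'' pieces are $O(1/N)$ directly by (\ref{est diff3}); the ``cross'' pieces are controlled by Cauchy--Schwarz via
\begin{equation*}
\bigl|\mathbb{E}\langle Mb,a-b\rangle\bigr|\leq C\Bigl(\mathbb{E}\int_0^T|a-b|^2dt\Bigr)^{1/2}\Bigl(\mathbb{E}\int_0^T|b|^2dt\Bigr)^{1/2},
\end{equation*}
where $b$ ranges over one of $\bar x_i$, $m$, or (for the cross terms coming from $-2\langle\tilde Q\tilde\Gamma_1\cdot,\cdot\rangle$) one of $x_i$, $\check x_i$, and the second factor is a uniform-in-$N$ $L^2$ norm. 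Summing the contributions on $[0,T]$, adding the analogous estimates for the terminal quadratic forms at $t=T$, and invoking the boundedness of $\tilde Q$, $\tilde G$, $\tilde\Gamma_1$, $\tilde\Gamma_2$ from \textbf{(A2)} and \textbf{(A4)} then yields the claimed $O(1/\sqrt N)$ bound.

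The main obstacle I anticipate is verifying that the $L^2$-norms invoked in the Cauchy--Schwarz step are bounded uniformly in $N$. For $\bar x_i$, $m$, $x_0$ this is immediate, because the limiting system (\ref{asy lim state}) does not depend on $N$ and its coefficients are bounded under \textbf{(A1)}-\textbf{(A4)} together with the solvability of (\ref{P}) and (\ref{Delta-Theta})-(\ref{decoupleHamiltoni}). For $x_i$ and $x^{(N)}$ in (\ref{asy state}) the standard a priori estimates for linear SDEs apply, provided one checks that the driving controls $\check u_{1j}^+$ are linear feedbacks of $(\check x_j,\check x_0,\check m)$ with $N$-uniform $L^2$-bounds --- which follows from Lemma \ref{est lemma1} and the corresponding bound for the limiting processes. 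The single admissible control $u_{1i}\in\mathcal{U}_{ic}$ contributes only a $1/N$-weighted term to the average dynamics, so its effect on the mean-field propagation is negligible. Once these uniform bounds are in place, combining them with (\ref{est diff3}) and with (\ref{est diff2}) (needed to bound $\check u_1^{(N)}-\bar u_1^+$) makes the Cauchy--Schwarz step routine and the lemma follows.
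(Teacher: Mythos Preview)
Your proposal is correct and follows essentially the same approach as the paper, which simply says ``The proof is similar to Lemma \ref{est lemma2} and omitted.'' Your explicit identification of the cancellations arising from $m(\cdot)=m^+(\cdot)$ and $x_0(\cdot)=x_0^+(\cdot)$, the polarization-and-Cauchy--Schwarz treatment of the remaining terms via (\ref{est diff3}), and the check of uniform $L^2$-bounds together spell out exactly what ``similar to Lemma \ref{est lemma2}'' means in this context.
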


\begin{proof}
The proof is similar to Lemma \ref{est lemma2} and omitted.
\end{proof}
From the above, we have the following result.
\begin{mythm}\label{followers asy thm}
Let \textbf{(A1)}-\textbf{(A4)} hold, and $\gamma>\hat{\gamma}$. Assume that the CC-incentive system (\ref{cc-incentive}) admits a solution. For any alternative $u_{1i}(\cdot)$ and the mean-field strategy $(\check{u}_{11}^+(\cdot),\cdots,\check{u}_{1N}^+(\cdot))$, we have
\begin{equation}
	\bar{\check{u}}_1^+(\cdot)=\lim\limits_{N\to\infty}\frac{1}{N}\sum_{i=1}^{N}\check{u}_{1i}^+(\cdot)=\bar{u}_1^+(\cdot)=\bar{u}_1^*(\cdot),
\end{equation}
\begin{equation}
\begin{aligned}
	&\mathcal{J}_i\left(u_{0i}(\check{u}_{1i}^+)(\cdot),\check{u}_{1i}^+(\cdot)\right)\\
&\leq\mathcal{J}_i\left(u_{0i}(u_{1i})(\cdot),u_{-0i}(\check{u}_{-1i}^+)(\cdot),u_{1i}(\cdot),\check{u}_{-1i}^+(\cdot)\right)+O\left(\frac{1}{\sqrt{N}}\right),\quad\forall u_{1i}(\cdot)\in\mathcal{U}_{ic}.
\end{aligned}
\end{equation}
\end{mythm}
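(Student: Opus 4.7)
The plan is to prove the two assertions separately. The first equality $\bar{\check{u}}_1^+(\cdot)=\bar{u}_1^+(\cdot)=\bar{u}_1^*(\cdot)$ is a direct consequence of the constructions already in place: by the law of large numbers applied to the i.i.d.\ noises $\{W_i\}$ and the dependence of $\check{u}_{1i}^+$ on $\check{x}_i$ only through the common quantities $(\check{x}_0,\check{m})$ plus individual state $\check{x}_i$, the arithmetic average $\frac{1}{N}\sum_{i=1}^N\check{u}_{1i}^+$ converges to $\bar{u}_1^+$ given by (\ref{u1+feedback}) as $N\to\infty$, while the identity $\bar{u}_1^+=\bar{u}_1^*$ is precisely the incentive matching condition enforced by the third and fourth equations of the CC-incentive system (\ref{cc-incentive}), which were constructed to make (\ref{achieve}) hold.

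For the asymptotic Nash inequality, I would use the standard three-step interpolation chain linking the centralized and limiting cost functionals. First, by Lemma \ref{est lemma2}, one has
\begin{equation*}
    \mathcal{J}_i\bigl(u_{0i}(\check{u}_{1i}^+)(\cdot),\check{u}_{1i}^+(\cdot)\bigr)
    =J_i\bigl(u_{0i}(u_{1i}^+)(\cdot),u_{1i}^+(\cdot)\bigr)+O\bigl(1/\sqrt{N}\bigr).
\end{equation*}
Next, since $u_{1i}^+(\cdot)$ is the optimal decentralized control of the auxiliary limiting Problem \textbf{(F2)'} (as established in Theorem \ref{thm4.1} and Theorem \ref{feedback}), the limiting optimality gives
\begin{equation*}
    J_i\bigl(u_{0i}(u_{1i}^+)(\cdot),u_{1i}^+(\cdot)\bigr)
    \leq J_i\bigl(u_{0i}(u_{1i})(\cdot),u_{-0i}(u_{-1i}^+)(\cdot),u_{1i}(\cdot),u_{-1i}^+(\cdot)\bigr),\quad\forall u_{1i}(\cdot)\in\mathcal{U}_{ic}.
\end{equation*}
Finally, by Lemma \ref{est lemma3}, the right-hand side differs from $\mathcal{J}_i(u_{0i}(u_{1i})(\cdot),u_{-0i}(\check{u}_{-1i}^+)(\cdot),u_{1i}(\cdot),\check{u}_{-1i}^+(\cdot))$ by at most $O(1/\sqrt{N})$. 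Chaining the three relations yields the desired approximate Nash bound.

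The main obstacle I anticipate is that the perturbation $u_{1i}(\cdot)\in\mathcal{U}_{ic}$ in the Nash inequality is arbitrary and potentially unbounded, whereas the constants hidden in the $O(1/\sqrt{N})$ estimates of Lemmas \ref{est lemma2} and \ref{est lemma3} depend on $L^2$-bounds for the state trajectories $x_i, x_j, x^{(N)}, \bar{x}_i, \bar{x}_j$, which in turn depend on the $L^2$-norm of $u_{1i}(\cdot)$. To handle this rigorously, I would first observe that it suffices to consider alternative strategies satisfying $\mathbb{E}\int_0^T|u_{1i}(t)|^2\,dt\leq C_0$ for some constant $C_0$ independent of $N$: indeed, the positive-definiteness assumption $\tilde{R}_1+L^{\top}\tilde{R}_0L\gg 0$ from \textbf{(A4)} together with $\tilde{Q}\geq 0$, $\tilde{G}\geq 0$ implies $\mathcal{J}_i(u_{0i}(u_{1i})(\cdot),u_{-0i}(\check{u}_{-1i}^+)(\cdot),u_{1i}(\cdot),\check{u}_{-1i}^+(\cdot))\geq c\,\mathbb{E}\int_0^T|u_{1i}(t)|^2\,dt-C_1$, so any strategy with cost smaller than $\mathcal{J}_i(u_{0i}(\check{u}_{1i}^+)(\cdot),\check{u}_{1i}^+(\cdot))$ must lie in such a bounded ball. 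Restricting attention to this ball, all constants in the perturbation estimates become uniform, and the chain above closes with a single $O(1/\sqrt{N})$ constant, completing the proof.
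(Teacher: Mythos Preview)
Your proposal is correct and follows essentially the same three-step interpolation chain as the paper's proof: invoke Lemma~\ref{est lemma2}, then the limiting optimality of $u_{1i}^+$ for Problem \textbf{(F2)'}, then Lemma~\ref{est lemma3}. Your additional care about restricting to a bounded ball of alternative strategies via the coercivity from \textbf{(A4)} is a valid refinement that the paper does not make explicit.
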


\begin{proof}
Using the results mentioned in Lemmas \ref{est lemma2} and \ref{est lemma3}, we have
\begin{equation}
\begin{aligned}
	&\mathcal{J}_i(u_{0i}(\check{u}_{1i}^+)(\cdot),\check{u}_{1i}^+(\cdot))= J_i(u_{0i}(u_{1i}^+)(\cdot),u_{1i}^+(\cdot))+O\left(\frac{1}{\sqrt{N}}\right)\\
	&\leq J_i\left(u_{0i}(u_{1i})(\cdot),u_{-0i}(u_{-1i}^+)(\cdot),u_{1i}(\cdot),u_{-1i}^+(\cdot)\right)+O\left(\frac{1}{\sqrt{N}}\right)\\
	&\leq \mathcal{J}_i\left(u_{0i}(u_{1i})(\cdot),u_{-0i}(\check{u}_{-1i}^+)(\cdot),u_{1i}(\cdot),\check{u}_{-1i}^+(\cdot)\right)+O\left(\frac{1}{\sqrt{N}}\right),
\end{aligned}
\end{equation}
thus the mean-field strategies $\check{u}_1^+(\cdot)=(\check{u}_{11}^+(\cdot),\cdots,\check{u}_{1N}^+(\cdot))$ has the $\hat{\epsilon}$-followers' Nash equilibrium property with $\hat{\epsilon}=O\left(\frac{1}{\sqrt{N}}\right)$.
\end{proof}
From Definition \ref{def}, Theorem \ref{leader asy thm} and \ref{followers asy thm}, we now state the main result of this section.

\begin{mythm}
Let \textbf{(A1)}-\textbf{(A4)} hold, and $\gamma>\hat{\gamma}$. Assume that the CC-incentive system (\ref{cc-incentive}) admits a solution. Then $(\hat{u}_0^*(\cdot),\hat{u}_1^*(\cdot),\hat{v}^*(\cdot))$ given by (\ref{de-sts}) is an $\epsilon$-robust incentive strategy.
\end{mythm}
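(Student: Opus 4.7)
The plan is to verify directly the two requirements of Definition \ref{def} by invoking the asymptotic optimality results already proved in the preceding two theorems, together with the consistency built into the construction of $\hat{u}^*$ and $\check{u}^+$.

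First I would dispose of part (\romannumeral1) of Definition \ref{def}. Theorem \ref{leader asy thm} asserts that under \textbf{(A1)}--\textbf{(A3)} and $\gamma>\hat{\gamma}$, the decentralized triple $(\hat{u}_0^*(\cdot),\hat{u}_1^*(\cdot),\hat{v}^*(\cdot))$ defined in (\ref{de-sts}) satisfies
\begin{equation*}
\bigl|\mathcal{J}_0(\hat{u}_0^*,\hat{u}_1^*,\hat{v}^*)-\inf_{(u_0,u_1)\in\mathcal{U}_{0c}}\sup_{v\in\mathcal{U}_{vc}}\mathcal{J}_0(u_0,u_1,v)\bigr|=O(1/\sqrt{N}).
\end{equation*}
Hence, taking $\epsilon_0=O(1/\sqrt{N})$, $(\hat{u}^*,\hat{v}^*)$ is an $\epsilon_0$-leader robust team-optimal strategy, which settles (\romannumeral1).

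Next I would verify part (\romannumeral2). Under the assumption that (\ref{cc-incentive}) admits a solution $(\Delta^*,\Theta^*,L^*)$, Theorem \ref{followers asy thm} guarantees that the decentralized set $\check{u}_1^+(\cdot)=(\check{u}_{11}^+(\cdot),\dots,\check{u}_{1N}^+(\cdot))$ is an $\hat{\epsilon}$-followers' Nash equilibrium with $\hat{\epsilon}=O(1/\sqrt{N})$, and that $\bar{\check{u}}_1^+(\cdot)=\bar{u}_1^+(\cdot)=\bar{u}_1^*(\cdot)$. On the other hand, by the very construction in (\ref{de-sts}), $\hat{u}_{1i}^*(\cdot)=\Theta_{21}^*\hat{x}_0(\cdot)+\Theta_{22}^*\hat{m}(\cdot)$ does not depend on $i$, so $\bar{\hat{u}}_1^*(\cdot)=\lim_{N\to\infty}\frac{1}{N}\sum_{i=1}^N\hat{u}_{1i}^*(\cdot)=\Theta_{21}^*\hat{x}_0+\Theta_{22}^*\hat{m}$. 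Using the previous lemma $\sup_{t}\mathbb{E}|\hat{x}^{(N)}(t)-\hat{m}(t)|^2=O(1/N)$ together with the closed-loop system (\ref{closed-loop system}), $(\hat{x}_0,\hat{m})$ coincides in the mean-field limit with $(x_0^*,m^*)$, so that $\bar{\hat{u}}_1^*(\cdot)=\bar{u}_1^*(\cdot)$. Combining the two identities yields $\bar{\hat{u}}_1^*(\cdot)=\bar{\check{u}}_1^+(\cdot)$, which is exactly condition (\romannumeral2).

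Putting the two pieces together with $\epsilon=(\epsilon_0,\hat{\epsilon})=(O(1/\sqrt{N}),O(1/\sqrt{N}))$ gives the desired conclusion. The only non-routine step is the matching identity $\bar{\hat{u}}_1^*=\bar{\check{u}}_1^+$: this is precisely where the CC-incentive system (\ref{cc-incentive}) plays its role, since solvability of (\ref{cc-incentive}) forces the closed-loop limit of the followers' best response (\ref{u1+feedback}) to coincide with the leader's team-optimal $\bar{u}_1^*$ given by the second line of (\ref{closed-loop rep}). The rest is bookkeeping.
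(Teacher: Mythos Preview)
Your proposal is correct and follows essentially the same approach as the paper. The paper itself gives no explicit proof of this theorem---it simply states the result after the sentence ``From Definition \ref{def}, Theorem \ref{leader asy thm} and \ref{followers asy thm}, we now state the main result of this section''---so your write-up is in fact more detailed: you spell out the verification of condition (\romannumeral2) in Definition \ref{def}, in particular the matching $\bar{\hat{u}}_1^*=\bar{\check{u}}_1^+$ via the limiting identification of $(\hat{x}_0,\hat{m})$ with $(x_0^*,m^*)$, which the paper leaves implicit.
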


\section{A numerical example}

Since the solvability of the CC-incentive system (\ref{cc-incentive}) is challenging and entirely new, we attempt to analyze its solvability under special cases before presenting a numerical example. We begin with a special case to illustrate that (\ref{cc-incentive}) has a solution on $[0,T]$ under some conditions.

\begin{Example}
Assume that $\gamma\textgreater\hat{\gamma}$, and system coefficients satisfy $\tilde{R}_0(\cdot)=E(\cdot)=0$,  $H(\cdot)=\tilde{B}(\cdot)$, $A(\cdot)+F(\cdot)=\tilde{A}(\cdot)+\tilde{F}(\cdot)$, $C(\cdot)+O(\cdot)=0$,  $\Gamma_1(\cdot)=\Gamma_2(\cdot)=I$, and $\mathcal{R}(\tilde{B}(t))\subseteq\mathcal{R}(\tilde{H}(t))$, $t\in[0,T]$, then the CC-incentive system (\ref{cc-incentive}) admits a unique solution $(\Delta^*(\cdot),\Theta^*(\cdot),L^*(\cdot))$.
\end{Example}
Consider $\tilde{R}_0(\cdot)=E(\cdot)=0$, from (\ref{notations}) and (\ref{notation}), the equation (\ref{cc-incentive}) can be simplified to:
\begin{equation}\label{eq.1}
\left\{\begin{aligned}
	&\dot{\Delta}+\Delta\big(\tilde{A}+\tilde{F}+\tilde{H}\eta\big)+\tilde{A}^\top\Delta-\Delta\big(\tilde{B}+\tilde{H}L\big)\tilde{R}_1^{-1}(\tilde{B}+\tilde{H}L)^\top\Delta\\
	&\quad+\Theta\big[F+B\eta-\big(H+BL\big)\tilde{R}_1^{-1}\big(\tilde{B}+\tilde{H}L\big)^\top\Delta\big]+\tilde{Q}-\tilde{Q}\tilde{\Gamma}_1=0,\quad\Delta(T)=\tilde{G}-\tilde{G}\tilde{\Gamma}_2,\\
	&\dot{\Theta}+\Theta\big(A+B\zeta\big)+\tilde{A}^\top\Theta-\Theta\big(H+BL\big)\tilde{R}_1^{-1}\big(\tilde{B}+\tilde{H}L\big)^\top\Theta\\
	&\quad+\Delta\big[\tilde{H}\zeta-\big(\tilde{B}+\tilde{H}L\big)\tilde{R}_1^{-1}\big(\tilde{B}+\tilde{H}L\big)^\top\Theta\big]=0,\quad\Theta(T)=0,\\
	&\big(\tilde{B}+\tilde{H}L\big)^\top\Theta+\tilde{R}_1\Theta_{21}^*=0,\qquad \big(\tilde{B}+\tilde{H}L\big)^\top\Delta+\tilde{R}_1\Theta_{22}^*=0.\\
\end{aligned}\right. 
\end{equation}

Substituting $\zeta=\Theta_{11}^*-L\Theta_{21}^*$ and $\eta=\Theta_{12}^*-L\Theta_{22}^*$ into equation (\ref{eq.1}) and using its two equality constraints yields
\begin{equation}\label{eq.2}
\left\{\begin{aligned}
	&\dot{\Delta}+\Delta\big(\tilde{A}+\tilde{F}+\tilde{H}\Theta_{12}^*+\tilde{B}\Theta_{22}^*\big)+\tilde{A}^\top\Delta+\Theta\big(F+B\Theta_{12}^*+H\Theta_{22}^*\big)+\tilde{Q}-\tilde{Q}\tilde{\Gamma}_1=0,\\
	&\dot{\Theta}+\Theta\big(A+B\Theta_{11}^*+H\Theta_{21}^*\big)+\tilde{A}^\top\Theta+\Delta\big(\tilde{H}\Theta_{11}^*+\tilde{B}\Theta_{21}^*\big)=0,\\
	&\big(\tilde{B}+\tilde{H}L\big)^\top\Theta+\tilde{R}_1\Theta_{21}^*=0,\qquad \big(\tilde{B}+\tilde{H}L\big)^\top\Delta+\tilde{R}_1\Theta_{22}^*=0,\\
	&\Delta(T)=\tilde{G}-\tilde{G}\tilde{\Gamma}_2,\qquad\Theta(T)=0.\\
\end{aligned}\right. 
\end{equation}

Let $\tilde{A}(\cdot)+\tilde{F}(\cdot)=A(\cdot)+F(\cdot)$, $C(\cdot)+O(\cdot)=0$, $\Gamma_1(\cdot)=\Gamma_2(\cdot)=I$. From the Riccati equation (\ref{P1})-(\ref{Pi2}), we have
\begin{equation*}
\begin{aligned}
	&\dot{\big(P_1+P_2\big)}+\big(P_1+P_2\big)A+\big(A+F\big)^\top\big(P_1+P_2\big)-\big[\big(P_1+P_2\big)B+\big(\Pi_1+\Pi_2\big)\tilde{H}\big]\\
	&\quad\times\big(R_0+D^\top P_1D\big)^{-1}\big(B^\top P_1+\tilde{H}^\top P_2+D^\top P_1C\big)-\big[\big(P_1+P_2\big)H+\big(\Pi_1+\Pi_2\big)\tilde{B}\big]\\
	&\quad\times\tilde{R}_1^{-1}\big(H^\top P_1+\tilde{B}^\top P_2\big)=0,\qquad\big(P_1+P_2\big)(T)=0,\\
\end{aligned}
\end{equation*}
and
\begin{equation*}
\begin{aligned}
	&\dot{\big(\Pi_1+\Pi_2\big)}+\big(\Pi_1+\Pi_2\big)\big(A+F\big)+\big(P_1+P_2\big)F+\big(A+F\big)^\top\big(\Pi_1+\Pi_2\big)\\
	&\quad-\big[\big(P_1+P_2\big)B+\big(\Pi_1+\Pi_2\big)\tilde{H}\big]\big(R_0+D^\top P_1D\big)^{-1}\big(B^\top \Pi_1+\tilde{H}^\top \Pi_2+D^\top P_1O\big)\\
	&\quad-\big[\big(P_1+P_2\big)H+\big(\Pi_1+\Pi_2\big)\tilde{B}\big]\tilde{R}_1^{-1}\big(H^\top \Pi_1+\tilde{B}^\top \Pi_2\big)=0,\qquad\big(\Pi_1+\Pi_2\big)(T)=0,\\
\end{aligned}
\end{equation*}
Obviously, $P_1(\cdot)+P_2(\cdot)=\Pi_1(\cdot)+\Pi_2(\cdot)=0$ satisfies the above two equations. Similarly, we get $P_1(\cdot)+\Pi_1(\cdot)=P_2(\cdot)+\Pi_2(\cdot)=0$. Then, the relation $P_2(\cdot)=\Pi_1(\cdot)=-P_1(\cdot)=-\Pi_2(\cdot)$ holds.

Therefore, we get
\begin{equation}\label{eq.3}
\begin{aligned}
	\Theta_{11}^*&=-\left(R_0+D^\top P_1D\right)^{-1}\left[\big(B-\tilde{H}\big)^\top P_1+D^\top P_1C\right],\\
    \Theta_{12}^*&=\left(R_0+D^\top P_1D\right)^{-1}\left[\big(B-\tilde{H}\big)^\top P_1+D^\top P_1C\right],\\
    \Theta_{21}^*&=-R_1^{-1}\big(H-\tilde{B}\big)^\top P_1,\quad \Theta_{22}^*=R_1^{-1}\big(H-\tilde{B}\big)^\top P_1,\\
\end{aligned}
\end{equation}
and  $\Theta_{11}^*+\Theta_{12}^*=\Theta_{21}^*+\Theta_{22}^*=0$.

When $H=\tilde{B}$, we have $\Theta_{21}^*=\Theta_{22}^*=0$. If $\mathcal{R}(\tilde{B}(t))\subseteq\mathcal{R}(\tilde{H}(t))$, then there exists a matrix-valued function $N(t)\in L^\infty(0,T;\mathbb{R}^{m_L\times m_F})$ such that $\tilde{B}(t)=\tilde{H}(t)N(t)$. Taking $L^*(t)=-N(t)$, $t\in[0,T]$, from (\ref{approx incentive strategy}), the approximation incentive strategy set of the leader $\mathcal{A}_0$ is 
\begin{equation*}
	\Gamma_i^*(t,u_{1i}(\cdot),x_0(\cdot),m(\cdot))=-N(t)u_{1i}(t)+\Theta_{11}^*(t)x_0(t)+\Theta_{12}^*(t)m(t),\\
\end{equation*}
and both equality constraints of equation (\ref{eq.2}) are satisfied. Moreover, (\ref{eq.2}) reduces to 
\begin{equation}\label{eq.4}
\left\{\begin{aligned}
	&\dot{\Delta}+\Delta\big(\tilde{A}+\tilde{F}+\tilde{H}\Theta_{12}^*\big)+\tilde{A}^\top\Delta+\Theta\big(F+B\Theta_{12}^*\big)+\tilde{Q}-\tilde{Q}\tilde{\Gamma}_1=0,\\
	&\dot{\Theta}+\Theta\big(A+B\Theta_{11}^*\big)+\tilde{A}^\top\Theta+\Delta\tilde{H}\Theta_{11}^*=0,\\
	&\Delta(T)=\tilde{G}-\tilde{G}\tilde{\Gamma}_2,\qquad\Theta(T)=0,\\
\end{aligned}\right. 
\end{equation}
which is equivalent to
\begin{equation}\label{eq.5}
\left\{\begin{aligned}
	&\dot{\big(\Delta+\Theta\big)}+\big(\Delta+\Theta\big)\big(A+F\big)+\tilde{A}^\top\big(\Delta+\Theta\big)+\tilde{Q}-\tilde{Q}\tilde{\Gamma}_1=0,\quad\big(\Delta+\Theta\big)(T)=\tilde{G}-\tilde{G}\tilde{\Gamma}_2,\\
	&\dot{\big(\Delta-\Theta\big)}+\big(\Delta-\Theta\big)A+\tilde{A}^\top\big(\Delta-\Theta\big)+\big(\Delta+\Theta\big)F+\big(\Delta\tilde{H}+\Theta B\big)\big(\Theta_{12}^*-\Theta_{11}^*\big)\\
	&\quad+\tilde{Q}-\tilde{Q}\tilde{\Gamma}_1=0,\quad\big(\Delta-\Theta\big)(T)=\tilde{G}-\tilde{G}\tilde{\Gamma}_2.\\
\end{aligned}\right. 
\end{equation}
Since 
\begin{equation}\label{trans}
	\Delta=\frac{\big(\Delta+\Theta\big)+\big(\Delta-\Theta\big)}{2},\qquad\Theta=\frac{\big(\Delta+\Theta\big)-\big(\Delta-\Theta\big)}{2},\\
\end{equation}
the equation (\ref{eq.5}) can be simplified to
\begin{equation}\label{eq.6}
\left\{\begin{aligned}
	&\dot{\big(\Delta+\Theta\big)}+\big(\Delta+\Theta\big)\big(A+F\big)+\tilde{A}^\top\big(\Delta+\Theta\big)+\tilde{Q}-\tilde{Q}\tilde{\Gamma}_1=0,\quad\big(\Delta+\Theta\big)(T)=\tilde{G}-\tilde{G}\tilde{\Gamma}_2,\\
	&\dot{\big(\Delta-\Theta\big)}+\big(\Delta-\Theta\big)\big[A+\big(B-\tilde{H}\big)\Theta_{11}^*\big]+\tilde{A}^\top\big(\Delta-\Theta\big)+\big(\Delta+\Theta\big)\big[F-\big(B+\tilde{H}\big)\Theta_{11}^*\big]\\
	&\quad+\tilde{Q}-\tilde{Q}\tilde{\Gamma}_1=0,\quad\big(\Delta-\Theta\big)(T)=\tilde{G}-\tilde{G}\tilde{\Gamma}_2.\\
\end{aligned}\right. 
\end{equation}

In fact, the first equation in equation (\ref{eq.6}) is a differential Sylvester equation in $\Delta+\Theta$, which admits a unique explicit solution (see Behr et al. \cite{Behr19})
\begin{equation}\label{eq.7}
\begin{aligned}
	\Delta(t)^*+\Theta^*(t)=\Phi_{\tilde{A}}(T,t)^\top\tilde{G}\big(I-\tilde{\Gamma}_2\big)\Phi_{(A+F)}(T,t)+\int_{t}^{T}\Phi_{\tilde{A}}(s,t)^\top\tilde{Q}\big(I-\tilde{\Gamma}_1\big)\Phi_{(A+F)}(s,t)ds,\\
\end{aligned}
\end{equation}
where $\Phi_{\tilde{A}}(t,s)$ and $\Phi_{(A+F)}(t,s)$ are the unique state-transition matrices of $\tilde{A}(t)$ and $A(t)+F(t)$ with respect to $s\in[0,T]$, respectively.
Then substituting it into the second equation of equation (\ref{eq.6}), which is also a linear differential Sylvester equation, we obtain
\begin{equation}\label{eq.8}
\begin{aligned}
	\Delta(t)^*-\Theta^*(t)=\Phi_{\tilde{A}}(T,t)^\top\tilde{G}\big(I-\tilde{\Gamma}_2\big)\Phi_{N_1}(T,t)+\int_{t}^{T}\Phi_{\tilde{A}}(s,t)^\top N_2(s)\Phi_{N_1}(s,t)ds,\\
\end{aligned}
\end{equation}
where $N_1:=A+\big(B-\tilde{H}\big)\Theta_{11}^*$, $N_2:=\big(\Delta^*+\Theta^*\big)\big[F-\big(B+\tilde{H}\big)\Theta_{11}^*\big]+\tilde{Q}-\tilde{Q}\tilde{\Gamma}_1$, and $\Phi_{N_1}(t,s)$ is the unique state-transition matrix of $N_1(t)$ with respect to $s\in[0,T]$. Thus, using (\ref{trans}), (\ref{eq.7}) and (\ref{eq.8}), we derive the explicit representations of solutions $\Delta^*$ and $\Theta^*$ to equation (\ref{eq.4}). 

In what follows, we present a numerical example to better demonstrate the efficacy of the proposed mean-field strategies. The parameters are listed in Table~\ref{2}. The population of the followers is $N=100$. The horizon length for the simulation is selected as $T=10$.
\begin{table}[H]
\centering
\caption{Simulation parameters}
\begin{tabular}{c c c c c c c c c c c c c c c}
	\hline
	$A$ & $B$ & $F$ & $H$ & $E$ & $C$ & $D$ & $O$ & $\tilde{A}$ & $\tilde{B}$ & $\tilde{F}$ & $\tilde{H}$ & $\sigma$ & $\xi$ & $x$\\
	\hline
	0.3 & 0.5 & 0.6 & 0.7 & -0.5 & 1 & 0.5 & 0 & 0.25 & 0.5 & 0.4 & 0.2 & 0.6 & 1 & 1 \\
	\hline\hline
	$\Gamma_1$ & $Q$ & $R_0$ & $R_1$ & $R_2$ & $\Gamma_2$ & $G$ & $\tilde{\Gamma}_1$ & $\tilde{Q}$ & $\tilde{R}_0$ & $\tilde{R}_1$ & $\tilde{\Gamma}_2$ & $\tilde{G}$ & $\gamma$\\
	\hline
	1 & 0.4 & 0.6 & 0.5 & 0.4 & 0.01 & 1& 0.99998 & 0.01 & 0.15 &0.6 & 1 & 0.01 & 5 \\
	\hline
	\label{2}
\end{tabular}
\end{table}

Through computation, Figure \ref{3} gives the numerical solutions of Riccati equations $P_1(\cdot)$, $\Pi_1(\cdot)$, $P_2(\cdot)$ and $\Pi_2(\cdot)$.

\begin{figure}[H]
	\centering
	\includegraphics[width=0.66\linewidth]{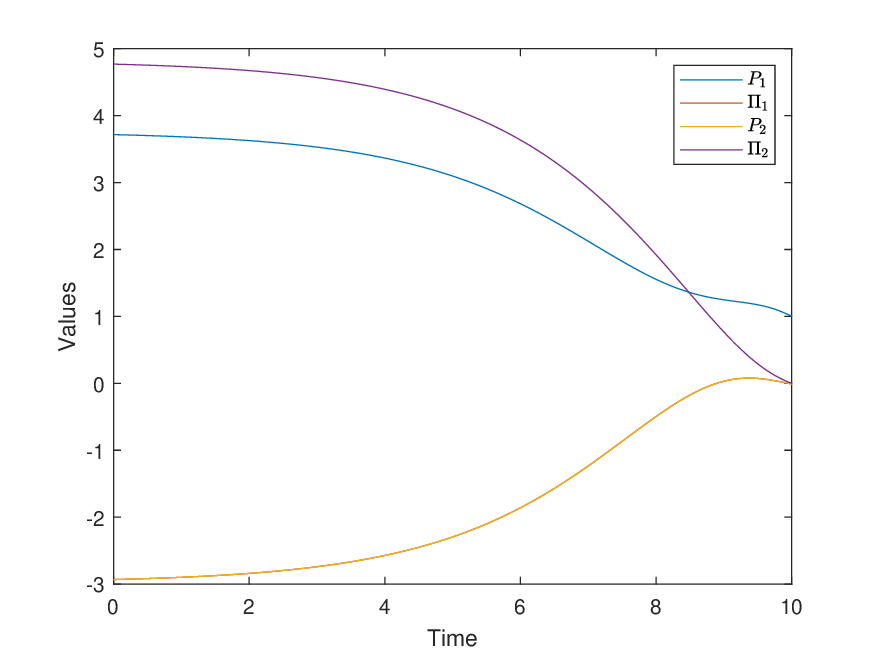}
	\caption{The numerical solutions of Riccati equations of $P_1(\cdot)$, $\Pi_1(\cdot)$, $P_2(\cdot)$ and $\Pi_2(\cdot)$}
	\label{3}
\end{figure}

The decentralized optimal state trajectories $x_0^*(\cdot)$ and $m^*(\cdot)$ of Problem \textbf{(L2)} are shown in Figure \ref{4}.

\begin{figure}[H]
	\centering
	\includegraphics[width=0.66\linewidth]{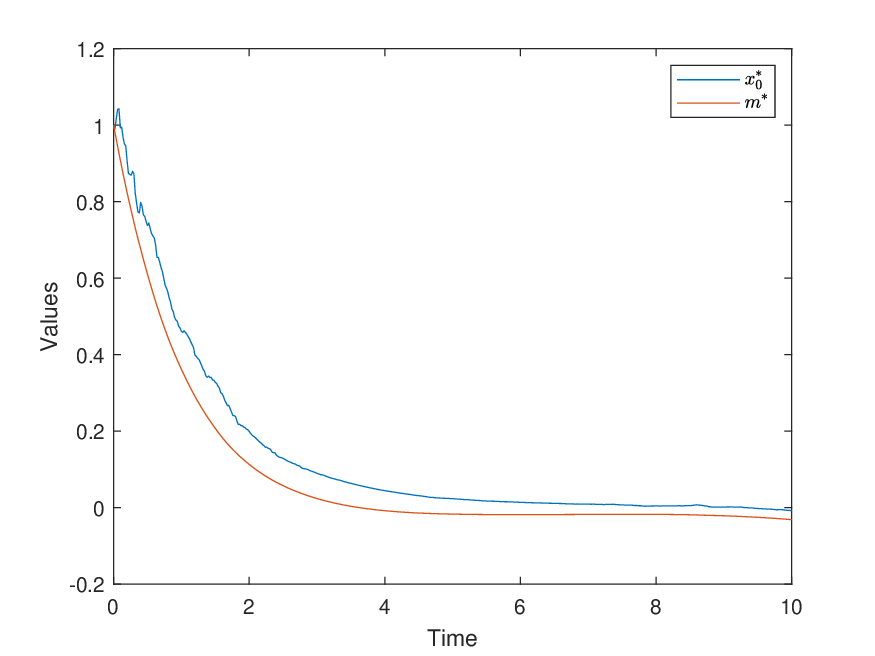}
	\caption{The optimal state trajectories $x_0^*(\cdot)$ and $m^*(\cdot)$ of Problem \textbf{(L2)}}
	\label{4}
\end{figure}

Figure \ref{5} shows the open-loop saddle points $(\bar{u}^*_0(\cdot),\bar{u}^*_1(\cdot))$ of Problem \textbf{(L2)}.

\begin{figure}[H]
	\centering
	\includegraphics[width=0.66\linewidth]{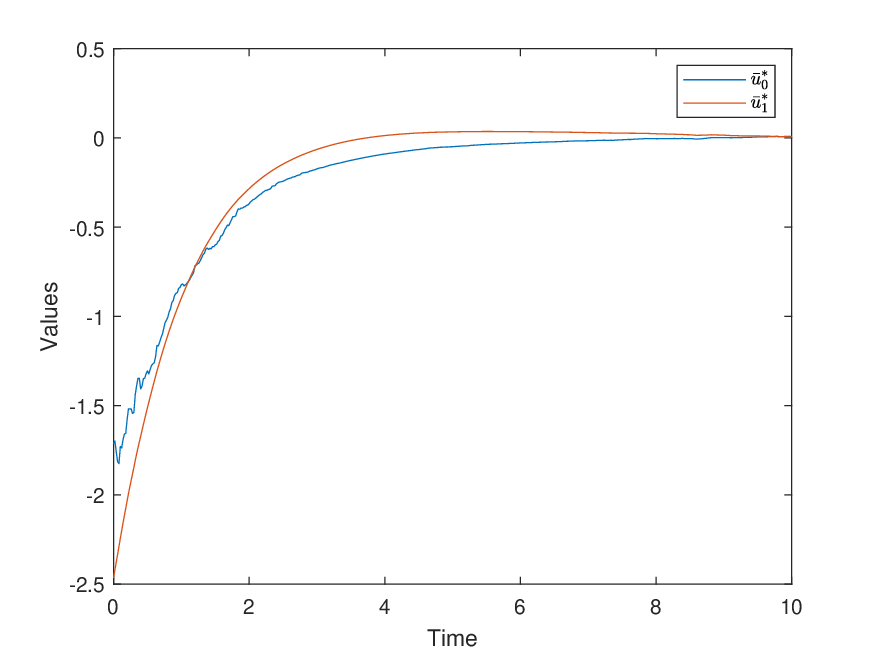}
	\caption{The decentralized optimal control $(\bar{u}^*_0(\cdot),\bar{u}^*_1(\cdot))$ of Problem \textbf{(L2)}}
	\label{5}
\end{figure}

The worst-case disturbance $v^*(\cdot)$ of Problem \textbf{(L2)} is shown in Figure \ref{6}.

\begin{figure}[H]
	\centering
	\includegraphics[width=0.66\linewidth]{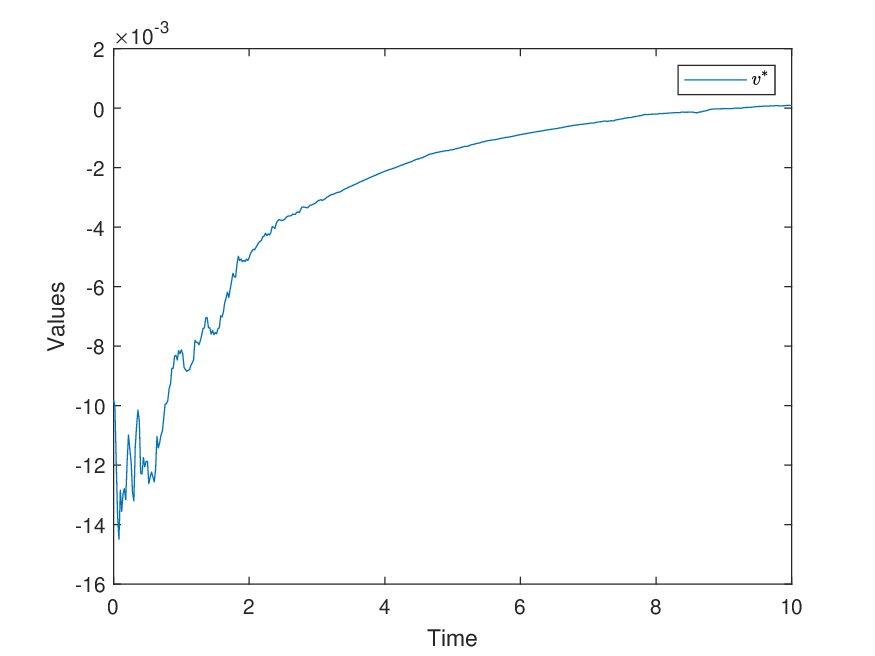}
	\caption{The worst-case disturbance $v^*(\cdot)$ for the leader}
	\label{6}
\end{figure}

Figure \ref{7} gives the numerical solutions of equations $\Delta^*(\cdot)$ and $\Theta^*(\cdot)$.

\begin{figure}[H]
	\centering
	\includegraphics[width=0.66\linewidth]{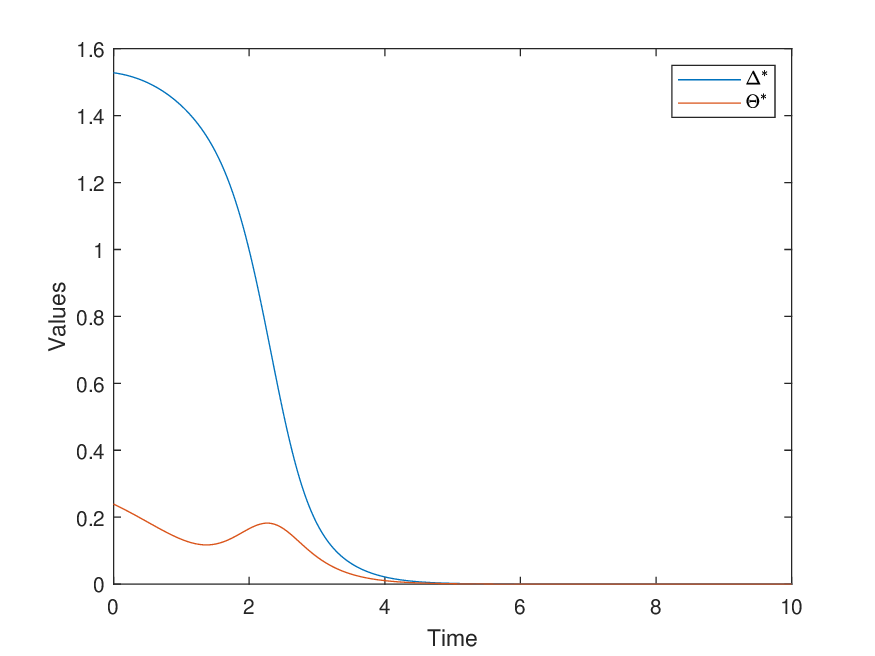}
	\caption{The numerical solutions of equations $\Delta^*(\cdot)$ and $\Theta^*(\cdot)$}
	\label{7}
\end{figure}

Figure \ref{8} draws the numerical solutions of incentive parameters $L^*(\cdot)$, $\eta^*(\cdot)$ and $\zeta^*(\cdot)$.

\begin{figure}[H]
	\centering
	\includegraphics[width=0.66\linewidth]{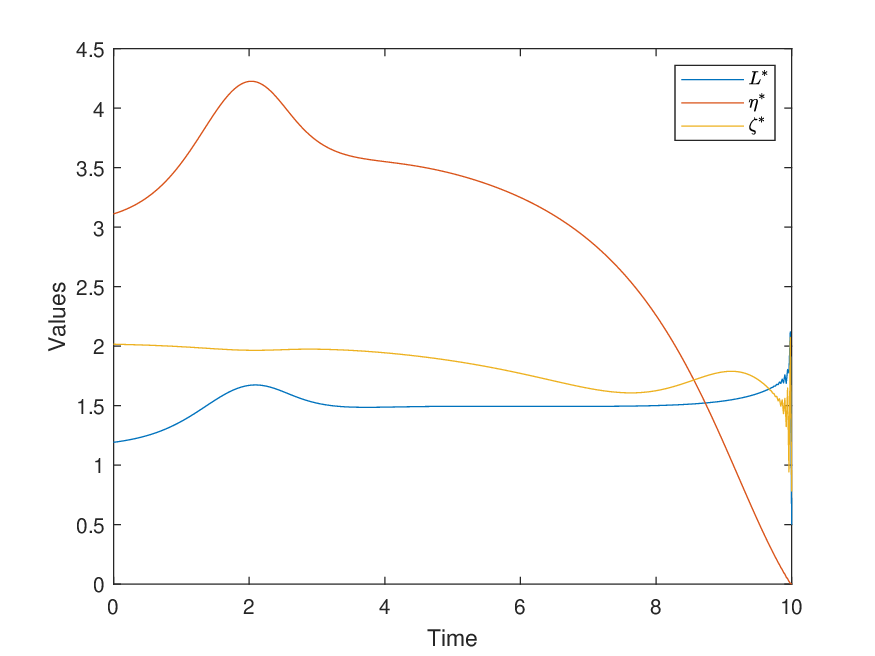}
	\caption{The numerical solutions of incentive parameters $L^*(\cdot)$, $\eta^*(\cdot)$ and $\zeta^*(\cdot)$}
	\label{8}
\end{figure}

The decentralized optimal state trajectories $x_i^+(\cdot)$ of 100 followers are shown in Figure \ref{9}.

\begin{figure}[H]
	\centering
	\includegraphics[width=0.65\linewidth]{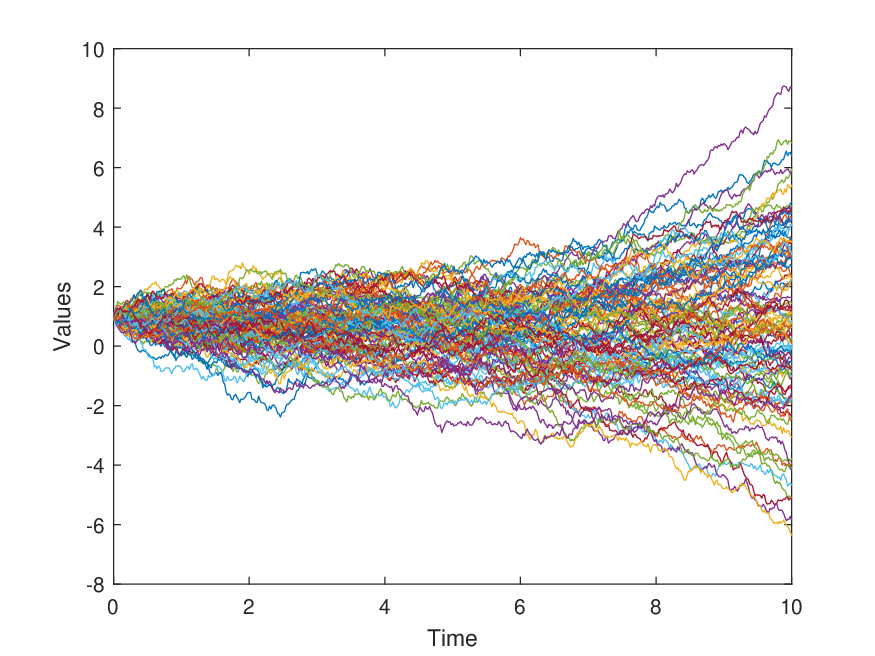}
	\caption{The decentralized state trajectories of 100 followers}
	\label{9}
\end{figure}
\vspace{-2mm}
Figure \ref{10} gives the decentralized strategies of $N$ followers, their average limiting strategy $\bar{u}_1^+(\cdot)$ and the optimal decentralized strategy $\bar{u}^*_1(\cdot)$ that the leader $\mathcal{A}_0$ expects followers to achieve.\vspace{-2mm}
\begin{figure}[H]
	\centering
	\includegraphics[width=0.66\linewidth]{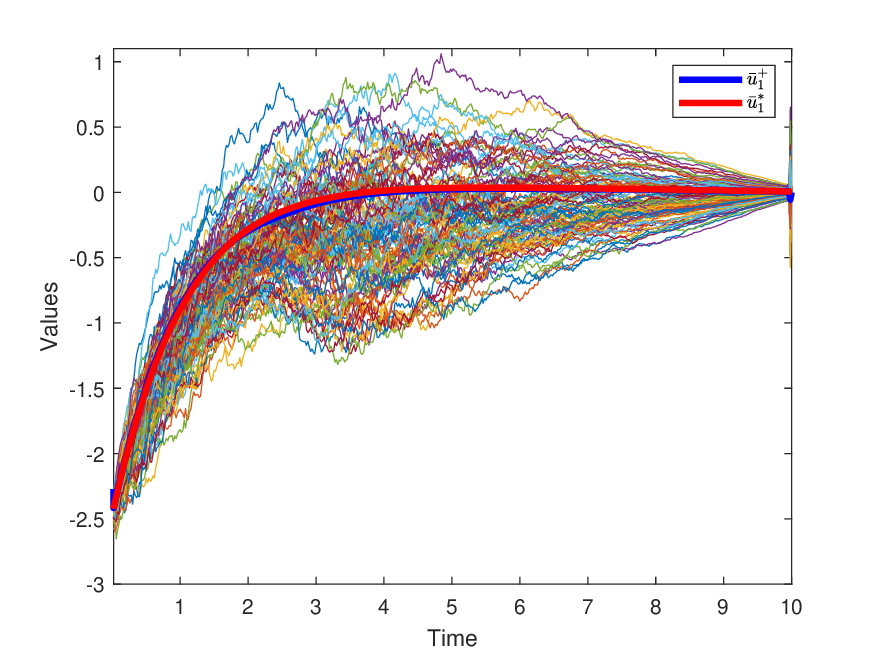}
	\caption{The mean-field strategies of 100 followers, $\bar{u}_1^+(\cdot)$ and $\bar{u}_1^*(\cdot)$}
	\label{10}
\end{figure}\vspace{-2mm}
Based on the previous theoretical analysis, under incentive matrices $L^*(\cdot)$, $\eta^*(\cdot)$ and $\zeta^*(\cdot)$, the leader $\mathcal{A}_0$ can induce $N$ followers' average limiting strategy $\bar{u}_1^+(\cdot)$ to equal its desired strategy $\bar{u}_1^*(\cdot)$, i.e., $\bar{u}_1^+(\cdot)=\bar{u}_1^*(\cdot)$. It can be seen from Figure \ref{10} that the thick blue line and the thick red line coincide throughout the entire time interval.

\section{Conclusions}

In this paper, we have studied a robust incentive SLQ Stackelberg MFGs with model uncertainty, where the external disturbance appears in the drift term of the leader's dynamics and the state-average and control-averages enter into the leader's dynamics and cost functional. By $H_\infty$ control theory, zero-sum game approach and duality theory, we have given the representation of the leader's limiting cost functional and the feedback decentralized open-loop saddle points via Riccati equations. Moreover, we have discussed the interrelations among uniform concavity of the limiting cost functional, solvability of the corresponding Riccati equation, and disturbance attenuation parameter. By convex analysis, the variational method and decoupling method, the followers' decentralized strategies and the consistency condition system have been derived, then the leader's approximate incentive strategy set was obtained. We have also demonstrated the asymptotical robust incentive optimality of the decentralized mean field strategy. At last, we have discussed the solvability of the CC-incentive system under some special conditions, and given a numerical example as further applications of theoretical results.

In the future, it is interesting to consider robust incentive SLQ mean field Stackelberg differential games with partial observation.


\begin{thebibliography}{0}

\bibitem{Ahmed17-1} M. Ahmed, H. Mukaidani, and T. Shima, $H_\infty$-constrained incentive Stackelberg games for discrete-time stochastic systems with multiple followers, \emph{IET Control Theory Appl.}, 11, 2475-2485, 2017.

\bibitem{Ahmed17-2} M. Ahmed, H. Mukaidani, and T. Shima, Infinite-horizon multi-leader-follower incentive Stackelberg games for linear stochastic systems with $H_\infty$ constraint, \emph{Proc. SICE Annual Conference 2017}, 1202-1207, September 19-22, Kanazawa, Japan, 2017.

\bibitem{Bagchi81} A. Bagchi, T. Ba\c{s}ar, Stackelberg strategies in linear-quadratic stochastic differential games,\emph{ J. Optim. Theory Appl.}, 35, 443-464, 1981.

\bibitem{Basar84} T. Ba\c{s}ar, Affine incentive schemes for stochastic systems with dynamic information, \emph{SIAM J. Control Optim.}, 22, 199-210, 1984.

\bibitem{Basar89} T. Ba\c{s}ar, Stochastic incentive problems with partial dynamic information and multiple levels of hierarchy, \emph{Eur. J. Political Econ.}, 5, 203-217, 1989.

\bibitem{Basar08} T. Ba\c{s}ar, P. Bernhard, \emph{$H_\infty$-Optimal Control and Related Minimax Design Problems: A Dynamic Game Approach}, Modern Birkh\"{a}user Classics, Boston, 2008.

\bibitem{Basar99} T. Ba\c{s}ar, G.J. Olsder, \emph{Dynamic Noncooperative Game Theory, 2nd Edition}, Philadelphia, 1999.

\bibitem{Behr19} M. Behr, P. Benner, and J. Heiland. Solution formulas for differential Sylvester and Lyapunov equations, \emph{Calcolo}, 56(4), 51, 2019.

\bibitem{Bensoussan15} A. Bensoussan, S. Chen, and S.P. Sethi. The maximum principle for global solutions of stochastic Stackelberg differential games, \emph{SIAM J. Control Optim.}, 53, 1956-1981, 2015.

\bibitem{Bensoussan13} A. Bensoussan, J. Frehse, and P. Yam, \emph{Mean Field Games and Mean Field Type Control Theory}, Springer, New York, 2013.

\bibitem{Cansever85} D. Cansever, T. Ba\c{s}ar, On stochastic incentive control problems with partial dynamic information, \emph{Syst. Control Lett.}, 6, 69-75, 1985.

\bibitem{Carmona18} R. Carmona, F. Delarue, \emph{Probabilistic Theory of Mean Field Games with Applications, I, II}, Springer, New York, 2018.

\bibitem{Gao23} W. Gao, Y. Lin, and W. Zhang, Incentive feedback Stackelberg strategy for the discrete-time stochastic systems, \emph{J. Frankl Inst.}, 360, 2404-2420, 2023.

\bibitem{Gao24} W. Gao, Y. Lin, and W. Zhang, Incentive feedback Stackelberg strategy in mean-field type stochastic difference games, \emph{J. Syst. Sci. Complex.}, 37, 1425-1445, 2024.

\bibitem{Ho82} Y. Ho, P. Luh, and G.J. Olsder, A control-theoretic view of incentives, \emph{Automatica}, 18, 167-180, 1982.

\bibitem{Hu18} Y. Hu, J. Huang, and T. Nie, Linear-quadratic-Gaussian mixed mean-field games with heterogeneous input constraints, \emph{SIAM J. Control Optim}, 56, 2835-2877, 2018.

\bibitem{Huang06} M. Huang, R. Malham\'{e}, and P. Caines, Large population stochastic dynamic games: Closed-loop McKean-Vlasov systems and the Nash certainty equivalence principle, \emph{Commun. Inf. Syst.}, 6, 221-252, 2006.

\bibitem{Ishida87} T. Ishida, Three-level incentive schemes using follower's strategies in differential games,\emph{ Int. J. Control}, 46, 1739-1750, 1987.

\bibitem{Ishida83} T. Ishida, E. Shimemura, Three-level incentive strategies in differential games, \emph{Int. J. Control}, 38, 1135-1148, 1983.

\bibitem{Lasry07} J. Lasry, P. Lions, Mean field games, \emph{Jpn. J. Math.}, 2, 229-260, 2007.

\bibitem{Li02} M. Li, J.B. Cruz Jr., and M.A. Simaan, An approach to discrete-time incentive feedback Stackelberg games, \emph{IEEE Trans. Systems Man Cybernet.}, 32, 10-24, 2002.

\bibitem{Li18} N. Li, Z. Yu, Forward-backward stochastic differential equations and linear-quadratic generalized Stackelberg games, \emph{SIAM J. Control Optim.}, 56, 4148-4180, 2018.

\bibitem{Li21} Z. Li, D. Marelli, M. Fu, Q. Cai, and W. Meng, Linear quadratic Gaussian Stackelberg game under asymmetric information patterns, \emph{Automatica}, 125, 109406, 2021.

\bibitem{Yin22} Y. Lin, W. Gao, and W. Zhang, Incentive feedback Stackelberg strategy for stochastic system with state-dependent noise, \emph{J. Frankl. Inst.}, 359, 2058-2072, 2022.

\bibitem{Ma94} J. Ma, P. Protter, and J. Yong, Solving forward-backward stochastic differential equations explicitly - a four-step scheme, \emph{Probab. Theory Relat. Fields}, 98, 339-359, 1994.

\bibitem{Ma99} J. Ma, J. Yong, \emph{Forward-Backward Stochastic Differential Equations and Their Applications}, Springer, Berlin, 1999.

\bibitem{Mizukami87} K. Mizukami, H. Wu, Two-level incentive Stackelberg strategies in LQ differential games with two noncooperative leaders and one follower, \emph{Trans. Society Instrument Control Engineers}, 23, 625-632, 1987.

\bibitem{Mizukami88} K. Mizukami, H. Wu, Incentive Stackelberg strategies in linear quadratic differential games with two noncooperative followers, System Modelling and Optimization, 113, \emph{Lecture Notes in Control and Information Sciences}, 436-445, Berlin Heidelberg, Springer, 1988.

\bibitem{Moon17} J. Moon, T. Ba\c{s}ar, Linear quadratic risk-sensitive and robust mean field games, \emph{IEEE Trans. Automat. Control}, 62, 1062-1077, 2017.

\bibitem{Mou06} L. Mou, J. Yong, Two-person zero-sum linear quadratic stochastic differential games by a Hilbert space method, \emph{J. Ind. Manag. Optim.}, 2, 95-117, 2006.

\bibitem{Mukaidani22} H. Mukaidani, S. Irie, H. Xu, and W. Zhang, Robust incentive Stackelberg games with a large population for stochastic mean-field systems,\emph{ IEEE Control Systems Letters}, 6, 1934-1939, 2022.

\bibitem{Mukaidani20} H. Mukaidani, R. Saravanakumar, and H. Xu, Robust incentive Stackelberg strategy for Markov jump linear stochastic systems via static output feedback, \emph{IET Control Theory Appl.}, 14, 1246-1254, 2020.

\bibitem{Mukaidani17-1} H. Mukaidani, T. Shima, M. Unno, H. Xu, and V. Dragan, Team-optimal incentive Stackelberg strategies for Markov jump linear stochastic systems with $H_\infty$ constraint, \emph{IFACPapersOnLine}, 50, 3780-3785, 2017.

\bibitem{Mukaidani18-1} H. Mukaidani, H. Xu, Robust incentive Stackelberg games for stochastic LPV systems, \emph{Proc. 2018 IEEE Conference on Decision and Control}, 1059-1064, December 17-19, Miami Beach, USA, 2018.

\bibitem{Mukaidani19} H. Mukaidani, H. Xu, Incentive Stackelberg games for stochastic linear systems with $H_\infty$ constraint,\emph{ IEEE Trans. Cybern.}, 49, 1463-1474, 2019.

\bibitem{Mukaidani18-2} H. Mukaidani, H. Xu, and V. Dragan, Static output-feedback incentive Stackelberg game for discrete-time Markov jump linear stochastic systems with external disturbance, \emph{IEEE Control Systems Letters}, 2, 701-706, 2018.

\bibitem{Mukaidani18-3} H. Mukaidani, H. Xu, T. Shima, and M. Ahmed, Multi-leader-follower incentive Stackelberg game for infinite-horizon Markov jump linear stochastic systems with $H_\infty$ constraint, \emph{Proc. 2018 IEEE International Conference on Systems, Man, and Cybernetics}, 3956-3963, October 7-10, Miyazaki, Japan, 2018.

\bibitem{Mukaidani17-2} H. Mukaidani, H. Xu, T. Shima, and V. Dragan, A stochastic multiple-leader-follower incentive Stackelberg strategy for Markov jump linear systems, \emph{IEEE Control Systems Letters}, 1, 250-255, 2017.

\bibitem{Ozai25} H. Ozai, Z. Tian, H. Mukaidani, A numerical approach for solving incentive Stackelberg games in mean-field discrete-time stochastic systems, \emph{Int. J. Syst. Sci.}, 1-24, 2025.

\bibitem{Petersen00} I. Petersen, V. Ugrinovskii, and A. Savkin, \emph{Robust Control Design Using $H_\infty$ Methods}, Springer, London, 2000.

\bibitem{Ratliff19} L. Ratliff, R. Dong, S. Sekar, and T. Fiez, A perspective on incentive design: Challenges and opportunities, \emph{Annu. Rev. Control Robot. Auton. Syst.}, 2, 305–338, 2019.

\bibitem{Sanjari25} S. Sanjari, S. Bose, and T. Ba\c{s}ar, Incentive designs for Stackelberg games with a large number of followers and their mean-field limits, \emph{Dyn. Games Appl.}, 15, 238-278, 2025.

\bibitem{Shi16} J. Shi, G. Wang, and J. Xiong, Leader-follower stochastic differential game with asymmetric information and applications, \emph{Automatica}, 63, 60-73, 2016.

\bibitem{Shi17} J. Shi, G. Wang, and J. Xiong, Linear-quadratic stochastic Stackelberg differential game with asymmetric information, \emph{Sci. China Infor. Sci.}, 60, 092202, 2017.

\bibitem{Stackelberg34} H. von Stackelberg, \emph{Marktform und Gleichgewicht}, Springer, Vienna, 1934. (An English translation appeared in \emph{The Theory of The Market Economy}, Oxford University Press, 1952.)

\bibitem{Sun19} J. Sun, J. Yong, Linear quadratic stocahastic two-person nonzero-sum differential games: Open-loop and closed-loop Nash equilibria, \emph{Stoch. Processes Appl.}, 129(2), 381-418, 2019.

\bibitem{Wang17} B. Wang, J. Zhang, Social optima in mean field linear-quadratic-Gaussian models with Markov jump parameters, \emph{SIAM J. Control. Optim.}, 55, 429-456, 2017.

\bibitem{Xiang24} N. Xiang, J. Shi, Three-level multi-leader-follower incentive Stackelberg differential game with $H_\infty$ constraint, \emph{arXiv:2412.09004}

\bibitem{Yong02} J. Yong, A leader-follower stochastic linear quadratic differential game, \emph{SIAM J. Control Optim.,} 41, 1015-1041, 2002.

\bibitem{Yong99} J. Yong, X. Zhou, \emph{Stochastic Controls: Hamiltonian Systems and HJB Equations}, Springer-Verlag, New York, 1999.

\bibitem{Zheng82} Y. Zheng, T. Ba\c{s}ar, Existence and derivations of optimal affine incentive schemes for Stackelberg games with partial information: A geometric approach,\emph{ Int. J. Control}, 35, 997-1011, 1982.

\bibitem{Zheng84} Y. Zheng, T. Ba\c{s}ar and J.B. Cruz Jr., Stackelberg strategies and incentives in multiperson deterministic decision problems, \emph{IEEE Trans. Systems Man Cybernet.}, 14, 10-24, 1984.

\end{thebibliography}
\end{document}